\newtheorem{theorem}{Theorem}[section]
\newtheorem{lemma}[theorem]{Lemma}
\newtheorem{cor}[theorem]{Corollary}
\theoremstyle{definition}
\newtheorem{definition}{Definition}[section]
\newtheorem{example}{Example}[section]
\newtheorem{remark}{Remark}[section]
\newcommand{\boundary}{\partial_{k,\ell}}
\newcommand{\im}{\mathrm{im}}
\newcommand{\E}{\mathbb{E}}
\newcommand{\R}{\mathbb{R}}
\newcommand{\rank}{\mathrm{rank}}
\newcommand{\len}{\mathrm{len}}
\definecolor{forest}{RGB}{34, 139, 34}
\definecolor{graphBlue}{RGB}{100, 143, 255}
\definecolor{graphRed}{RGB}{220, 38, 127}
\definecolor{graphYellow}{RGB}{255, 176, 0}
\title[Eulerian magnitude homology] {Eulerian magnitude homology: subgraph structure and random graphs}
\author{Chad Giusti}
\address[CG]{Department of Mathematics, Oregon State University Corvallis, OR 97405}
\email{chad.giusti@oregonstate.edu}
\author{Giuliamaria Menara}
\address[GM]{Department of Mathematics and Geosciences, University of Trieste, Trieste, 34127, Italy}
\email{giuliamaria.menara@phd.units.it}
\keywords{Magnitude Homology $\cdot$ Erd\H{o}s-R\'{e}nyi Random Graphs $\cdot$ Random Geometric Graphs $\cdot$ Graph Substructures}
\begin{document}
\maketitle

\begin{abstract}In this paper we explore the connection between the ranks of the magnitude homology groups of a graph and the structure of its subgraphs. To this end, we introduce variants of magnitude homology called \emph{eulerian} magnitude homology and \emph{discriminant} magnitude homology. Leveraging the combinatorics of the differential in magnitude homology, we illustrate a close relationship between the ranks of the eulerian magnitude homology groups on the first diagonal and counts of subgraphs which fall in specific classes.
We leverage these tools to study limiting behavior of the eulerian magnitude homology groups for Erd\H{o}s-R\'{e}nyi random graphs and random geometric graphs, producing for both models a vanishing threshold for the eulerian magnitude homology groups on the first diagonal. This in turn provides a  characterization of the generators for the corresponding magnitude homology groups. Finally, we develop an explicit asymptotic estimate the expected rank of eulerian magnitude homology along the first diagonal for these random graph models.
\end{abstract}

\section{Introduction}\label{sec1}

In  \cite{leinster2013magnitude}, Leinster introduced the \emph{magnitude} of a metric space, a measurement of ``size" analogous to the Euler characteristic of a category \cite{leinster2006euler}. Magnitude provides a measure of size in the sense that it satisfies
\[
\text{Size}(A\cup B) = \text{Size}(A) + \text{Size}(B) - \text{Size}(A \cap B) 
\]
\[
\text{Size}(A \times B) = \text{Size}(A) \times \text{Size}(B),
\]
in analogy with other common notions of size like cardinality of sets, volumes of subsets of $\R^n$, dimensions of vector spaces, and Euler characteristic of topological spaces. Because it is intimately related to Euler characteristic, it is natural to consider categorification of the magnitude to a homology theory, as described in \cite{hepworth2015categorifying, leinster2021magnitude}. 
Homology theories for metric spaces are of general interest to the applied topology community, as evidenced by the success of persistent homology. The case of graphs equipped with the path metric is of particular utility, as new methods for interpreting and understanding the structure of networks are in general demand.

Explicitly, the magnitude chain complex of a simple graph is a bigraded complex described in terms of \emph{$k$-trails}, lists of $k+1$ \emph{landmark} vertices in the graph. A $k$-trail is of length $\ell$ if there is a walk in the graph of of length $\ell$ which passes through the listed landmarks in order, and this walk is minimal in length among all such. The differential in this chain complex deletes individual landmarks, with non-zero coefficient precisely when the deletion does not change the length of the trail. Such nonzero terms thus indicate the corresponding landmark is non-essential; a trail following the other landmarks must pass through at least as many vertices, even without this instruction. One can think of this as a form of ``non-convexity", as there are no shorter paths between some pair of landmarks \cite{leinster2021entropy}. However, beyond this term-wise intuition, the structure of magnitude homology groups remains hard to interpret. Recent developments in this direction are achieved in~\cite{asao2024magnitude}.

In this paper, we will make some progress toward elucidating the connection between magnitude homology of simple graphs equipped with the path metric and their combinatorial structure. The authors are certainly not the first to consider this problem.
In 2018 Gu confirmed the existence of graphs with the same magnitude but different magnitude homology \cite{gu2018graph}, while Kaneta and Yoshinaga have analyzed the structure and implications of torsion in magnitude homology \cite{kaneta2021magnitude}.
Torsion in the magnitude homology of graphs was also studied by Sazdanovic and Summers in \cite{sazdanovic2021torsion} and by Caputi and Collari in \cite{caputi2023finite}.
In \cite{asao2021girth} the authors explore the interaction between the magnitude homology of a graph and its diagonality and girth. 
Tajima and Yoshinaga investigate the connection between the homotopy type of the Asao-Izumihara CW complex (c.f. \cite{asao2020geometric}) and the diagonality of magnitude homology groups, \cite{tajima2021magnitude}.
As demonstrated by Cho in \cite{cho2019quantales}, magnitude and persistent homology can be thought of as endpoints on a spectrum of such theories.
Also, it is evident from \cite{hepworth2015categorifying} that calculating the magnitude homology of a graph is intricate and challenging, leading to the emergence of several technical approaches for its computation \cite{asao2020geometric, gu2018graph, sazdanovic2021torsion}. 

\medskip 
The approach we take in this work is to observe that a large portion of the magnitude chain complex is redundant, in the sense that the chains reflect combinatorial structure already recorded by chains of lower bigrading. To leverage this observation, in Section \ref{sec:EMH}, we define the subcomplex of \emph{eulerian magnitude chains}, supported on trails with no repeated landmarks. Focusing on the $k=\ell$ line, where the list of involved landmarks completely determines a trail, we develop the notion of a \emph{structure graph} for families of chains, which captures which chains share terms in their differentials. In Theorem \ref{thm:cycle_decomp}, we demonstrate that cycles in this complex can be decomposed into cycles with simple structure graphs. We leverage in this decomposition Corollary \ref{cor:cycle_spanning_set} to give a simple characterization of a generating set for $(k,k)$-graded eulerian magnitude homology of a graph. Combining this generating set with the language of \emph{class graphs} from \cite{yu2009computing}, we thus characterize which subgraphs of a graph support non-trivial cycles in $EMH_{k,k}(G)$ in terms of the corresponding structure graphs, providing a framework for computing these groups for graphs of interest.

Equipped with these observations, we consider the complementary set of trails, which must revisit a landmark, which we call the \emph{discriminant magnitude chains}, by analogy to the study of singular maps in the study of embeddings. Leveraging the corresponding long exact sequence in homology, in Theorems \ref{thm:relation MH EMH DMH} and \ref{thm:iso MH DMH}, we observe that the eulerian magnitude homology groups of lower bifiltration control the structure of the discriminant magnitude homology groups, and when lower eulerian magnitude homology groups vanishes we obtain a complete characterization of the generators of the $(k,k)$-magnitude homology groups.

In the interest of exploring what features of a graph the (eulerian) magnitude homology groups capture, we then turn our attention to two classes of random graphs: in Section \ref{sec:EMH_ER}, we study Erd\"os-R\'enyi random graphs, and in \ref{sec:EMH_RGG} we study random geometric graphs on the standard torus. By understanding these ``unstructured" examples, we aim to provide a backdrop for interpreting magnitude homology of ``structured" graphs including those observed in applications. In each context, we derive a vanishing threshold for the limiting expected rank of the $(k,k)$-eulerian magnitude homology in terms of the density parameter (Theorems \ref{thm:vanishingthreshER} and \ref{thm:vanishing threshold RGG}). In combination with our observation above about the relationship to discriminant magnitude homology, this provides a characterization of the structure of expected $(k,k)$-magnitude homology groups in this range. Further, adapting tools from \cite{kahle2013limit}, we develop a characterization of the limiting expected Betti numbers of the $(k,k)$-eulerian magnitude homology groups in terms of density (Theorems \ref{thm:asymp size BettiEMH} and \ref{thm:asympBettiRGG}) and corresponding central limit theorems.

\subsection{Outline}
The paper is organized as follows. 
We start by recalling in Section \ref{sec:Background} some general background about graphs, magnitude homology and random models.
Then in Section \ref{sec:EMH} we introduce \emph{eulerian} magnitude homology and \emph{discriminant} magnitude homology and highlight the advantages of the first in terms of interpretation. We then investigate the relationship between magnitude homology and the newly defined eulerian and discriminant magnitude homology.

In Sections \ref{sec:EMH_ER} and \ref{sec:EMH_RGG} we turn our attention to the computation of eulerian magnitude homology of Erd\H{o}s-R\'{e}nyi random graphs and random geometric graphs, respectively.
In both cases, we will develop an in-depth analysis of the eulerian magnitude homology groups of the $k=\ell$ diagonal by identifying the subgraphs induced by homology cycles, producing a vanishing threshold and providing an explicit formula for the asymptotic size of the eulerian magnitude homology groups.

\section{Background}
\label{sec:Background}

\emph{Magnitude} is a multiscale measure of ``size" for a metric space developed by Leinster in \cite{leinster2006euler}, and studied in the particular case of graphs in \cite{leinster2019magnitude}. In this latter paper, Leinster develops cardinality-like properties of the magnitude of a graph, including being multiplicative with respect to the Cartesian product and having (in some more restrictive cases) an inclusion-exclusion formula for the magnitude of a union. It is also shown that the magnitude of a graph is formally both a rational function over $\mathbb{Q}$ and a power series over $\mathbb{Z}$, and that it shares similarities with the Tutte polynomial. 

The \emph{magnitude homology} of a graph $MH_{k,\ell}(G)$, first introduced by Hepworth and Willerton in \cite{hepworth2015categorifying}, is a categorification of magnitude. We will now recall their definition, some simple results, and give an example of the computation of a magnitude homology group that will provide motivation for our definition of eulerian magnitude homology.

Throughout the paper we adopt the notation $[m] = \{1, \dots, m\}$ and $[m]_0 = \{0, \dots, m\}$ for common indexing sets.

\subsection{Graph terminology and notation}
We will write $G=(V,E)$ to denote an undirected, simple graph\footnote{As these are the only flavor of graph we will encounter, we will simply call these  ``graphs".}, with vertices $V$  and edges $E \subseteq {V \choose 2}$, unordered pairs of distinct vertices. Recall that a \emph{walk} in a graph $G$ is an ordered sequence of vertices $x_0,x_1,\ldots,x_k\in V$ such that there is an edge $\{x_i,x_{i+1}\}\in E$ for all $i \in [k]_0$, and a \emph{path} is a walk with no repeated vertices. A \emph{simple circuit} is a walk consisting of at least three vertices in $G$ for which $x_0 = x_k$ and there is no other repetition of vertices. We may view the set of vertices of a graph as an extended metric space by taking the \emph{path}  distance $d(u,v)$ to be equal to the length of a shortest path in $G$ from $u$ to $v$, if such a path exists, and taking $d(u,v) = \infty$ if $u$ and $v$ lie in different components of $G$. 

If $W \subseteq V,$ the \emph{full} subgraph of $G$ on $W$, written $G\vert_W,$ is the subgraph containing all edges of $G$ supported on $W$. 
Given a graph $H = (V', E'),$ we write $c(G, H)$ for the  number of full subgraphs of $G$  isomorphic to $H,$
    \[c(G, H) = \vert\{W \subseteq V\;\colon\; G\vert_{W} \cong H\}\vert.\]
Finally, write $\Delta_k$ for the complete graph on $k$ vertices, or $\Delta_V$ for the complete graph on a given set $V$ of vertices. A \emph{clique} in $G$ is a complete full subgraph of $G,$ and a $k$-clique is a clique supported on $k$ vertices.

\begin{definition}\label{def:ktrail}
Let $G = (V,E)$ be a graph, and $k$ a non-negative integer. A \emph{$k$-trail} $\bar{x}$ in $G$ is a $(k+1)$-tuple $(x_0,\dots,x_k) \in V^{k+1}$ of vertices for which $x_i \neq x_{i+1}$ and $d(x_i,x_{i+1})<\infty$ for every $i \in [k-1]_0$.
The \emph{length} of a $k$-trail $(x_0,\dots,x_k)$ in $G$ is defined as the minimum length of a walk that visits $x_0,x_1,\ldots,x_k$ in this order:
\[
\len(x_0,\dots,x_k) = d(x_0,x_1)+\cdots + d(x_{k-1},x_k).
\]
We call the vertices $x_0, \dots x_{k}$ the \emph{landmarks}, $x_0$ the \emph{starting point}, and $x_k$ the \emph{ending point} of the $k$-trail. Given a $k$-trail $\bar{x},$ write $L(\bar{x}) = \{x_0, \dots, x_k\}$ for the corresponding set of landmarks. Write $T_{k,\ell}(G)$ for the collection of all $k$-trails on $G$ of length $\ell.$
\end{definition}

\subsection{Magnitude homology}

The two parameters $k$ and $\ell$ for trails stratify walks in $G$ that pass through given sequences of vertices, and we can use this stratification to define a bigraded chain complex.

\begin{definition}[{\cite[Def 2.2]{hepworth2015categorifying}}]
    \label{differential}
    Given a graph $G$, the \emph{$(k,\ell)$-magnitude chain group}, $MC_{k,\ell}(G) = \mathbb{Z}\langle T_{k,\ell}(G)\rangle,$ is the free abelian group generated by $k$-trails in $G$ of length $\ell$.

    Denote by $(x_0,\dots,\hat{x_i},\dots,x_k)$ the $k$-tuple obtained by removing the $i$-th vertex from the $(k+1)$-tuple $(x_0,\dots,x_k)$.  We define the \emph{differential}
	\[
	\partial_{k,\ell}: MC_{k,\ell}(G) \to MC_{k-1,\ell}(G)
	\]
	to be the signed sum $\partial_{k,\ell}= \sum_{i\in[k-1]}(-1)^{i}\partial_{k,\ell}^i$ of chains corresponding to omitting landmarks without shortening the walk or changing its starting or ending points,
	\[
	\partial_{k,\ell}^i(x_0,\dots,x_k) = \begin{cases}
		(x_0,\dots,\hat{x_i},\dots,x_k) , &\text{ if } \len(x_0,\dots,\hat{x_i},\dots,x_k) = \ell, \\
		0, &\text{ otherwise.}\\
	\end{cases}
	\]

	For a non-negative integer $\ell$, we obtain the \emph{magnitude chain complex}, $MC_{*,\ell}(G),$ given by the following sequence of free abelian groups and differentials
	\[
	\cdots \to MC_{k+2,\ell}(G) \xrightarrow{\partial_{k+2,\ell}} MC_{k+1,\ell}(G) \xrightarrow{\partial_{k+1,\ell}} MC_{k,\ell}(G) \xrightarrow{\partial_{k,\ell}} MC_{k-1,\ell}(G) \to \cdots
	\]
\end{definition}

It is shown in \cite[Lemma 11]{hepworth2015categorifying} that the composition $\partial_{k,\ell} \circ \partial_{k+1,\ell}$ vanishes, justifying the name ``differential", and allowing them to define the corresponding bigraded homology groups of a graph.

\begin{definition}[{\cite[Def 2.4]{hepworth2015categorifying}}]
	\label{def_MH}
	The \emph{$(k,\ell)$-magnitude homology group} of the graph $G$ is 
	\[
	MH_{k,\ell}(G) = H_k(MC_{*,\ell}(G)) = \frac{\ker(\partial_{k,\ell})}{\im(\partial_{k+1,\ell})}.
	\]
\end{definition}

The magnitude homology of a graph is a rich graph invariant. However, understanding what the groups tell us about the structure of the graph is not straightforward. Hepworth and Willerton \cite[Proposition 9]{hepworth2015categorifying} show that the first two non-trivial groups simply count elements of $G$: $MH_{0,0}(G)$ is the free abelian group on $V,$ and $MH_{1,1}(G)$ is the free abelian group on the set of \emph{oriented} edges of $G.$ 

It is also straightforward to demonstrate that magnitude homology  vanishes when the length of the path is too short to support the necessary landmarks.

\begin{lemma}[{c.f. \cite[Proposition 10]{hepworth2015categorifying}}]
\label{lem:LowerTriangular}
Let $G$ be a graph, and $k > \ell$ non-negative integers. Then $MC_{k,\ell}(G) \cong 0.$
\end{lemma}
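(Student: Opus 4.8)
The plan is to show that the hypothesis $k > \ell$ forces the generating set $T_{k,\ell}(G)$ of $MC_{k,\ell}(G)$ to be empty, whence the free abelian group on it is trivial.

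First I would recall from Definition~\ref{def:ktrail} that any $k$-trail $\bar{x} = (x_0,\dots,x_k)$ satisfies $x_i \neq x_{i+1}$ and $d(x_i,x_{i+1}) < \infty$ for every $i \in [k-1]_0$. Since the path distance restricted to any connected component is an honest metric valued in the non-negative integers, distinct vertices in the same component are at distance at least $1$; combined with finiteness of $d(x_i,x_{i+1})$, this gives $d(x_i,x_{i+1}) \geq 1$ for all $i \in [k-1]_0$. Summing over the $k$ consecutive pairs, the length of the trail satisfies
\[
\len(x_0,\dots,x_k) = \sum_{i \in [k-1]_0} d(x_i,x_{i+1}) \;\geq\; k.
\]
Hence if $\bar{x} \in T_{k,\ell}(G)$ we would have $\ell = \len(\bar{x}) \geq k$, contradicting $k > \ell$. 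Therefore $T_{k,\ell}(G) = \varnothing$, and $MC_{k,\ell}(G) = \mathbb{Z}\langle T_{k,\ell}(G)\rangle = \mathbb{Z}\langle \varnothing \rangle \cong 0$.

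I do not anticipate a genuine obstacle here: the only point requiring a word of care is the justification that $d(x_i,x_{i+1}) \geq 1$, i.e.\ that the finite path distance between two distinct vertices is a positive integer, which is immediate from the definition of the path metric. (One could alternatively phrase the whole argument purely combinatorially: a walk realizing the trail must take at least one step between each consecutive pair of distinct landmarks, so it has at least $k$ edges.) This also recovers, as the boundary case, the ``diagonal'' phenomenon that on the line $k=\ell$ a trail is length-minimal exactly when every consecutive pair of landmarks is adjacent, which is used implicitly in the later sections.
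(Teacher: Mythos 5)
Your proof is correct and follows essentially the same route as the paper's: both observe that consecutive landmarks in a $k$-trail are distinct, so $d(x_i,x_{i+1}) \geq 1$ for each of the $k$ steps, forcing $\len(\bar{x}) \geq k > \ell$ and hence $T_{k,\ell}(G) = \varnothing$.
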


\begin{proof}
    Suppose $MC_{k,\ell}(G)\neq 0.$ Then, there must exist a $k$-trail $(x_0,\dots,x_k)$ in $G$ so that $\len(x_0,\dots,x_k)=d(x_0,x_1)+\cdots+d(x_{k-1},x_k) = \ell$.
	However, as consecutive vertices in a $k$-trail must be distinct, $d(x_i,x_{i+1}) \geq 1$ for $i \in [k-1]_0$, so $k$ can be at most $\ell$.
\end{proof}

We will make extensive use of the following immediate consequence of this result.

\begin{cor}
	\label{cor:homisker}
    Let $G$ be a graph and $k$ a non-negative integer. Then $MH_{k,k}(G) \cong ker(\partial_{k,k}).$
\end{cor}

However, even for uncomplicated graphs, those magnitude homology groups which do not vanish can be quite intricate.

\begin{example}
	\label{toyexampleMH}
	 Consider the graph $G$ in Figure \ref{fig:toyexampleMH}. We will compute $MH_{2,2}(G).$
	
	\begin{figure}[t]
		
		\centering
		\begin{tikzpicture}[node distance={15mm}, thick, main/.style = {draw, circle}]
			\node[main] (0) {$0$}; 
			\node[main] (1) [right of=0] {$1$};  
			\node[main] (2) [above of=1] {$2$};
			\node[main] (3) [right of=1] {$3$};
			\draw (0) -- (1);
			\draw (1) -- (2);
			\draw (0) -- (2);
			\draw (2) -- (3);
		\end{tikzpicture} 
		
		\caption{This graph will be used in Examples \ref{toyexampleMH} and \ref{ex:toyexampleEMH} to compare computations of magnitude homology and eulerian magnitude homology.}
        \label{fig:toyexampleMH}
	\end{figure}
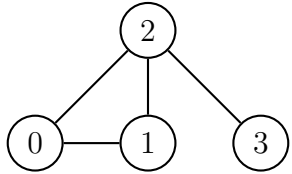
	
	$MC_{2,2}(G)$ is generated by the $2$-trails in $G$ of length $2$. There are eighteen such, consisting of all possible walks of length two in the graph: (0,1,0), (0,1,2), (0,2,0), (0,2,1), (0,2,3), (1,0,1), (1,0,2), (1,2,0), (1,2,1), (1,2,3), (2,0,1), (2,0,2), (2,1,0), (2,1,2), (2,3,2), (3,2,0), (3,2,1), (3,2,3).
    Similarly, $MC_{1,2}(G)$ is generated by $1$-trails in $G$ of length $2$. These are pairs of vertices for which the minimum length of a connecting path is 2, of which there are four: (0,3), (1,3), (3,0), (3,1).
    
    Because the differential $\partial_{2,2}$ consists of omitting only the center vertex, it is straightforward to check that it is surjective, and that the kernel is generated by the $14$ elements whose length diminishes when the middle vertex is removed; that is, all elements in $MC_{2,2}(G)$ except (0,2,3), (1,2,3), (3,2,0), (3,2,1).  

	On the other hand, by Lemma  \ref{lem:LowerTriangular}, $MC_{3,2}(G)$ is the trivial group, and thus the image of $\partial_{3,2}$ is $\langle 0 \rangle$. Therefore, $\rank(MH_{2,2}(G))=14,$ generated by those walks that do not have vertex 3 at exactly one endpoint. Of these, eight consist of walks back-and-forth across a single edge, while the remaining six are length 2 walks between vertices of the triangle with vertices 0, 1, and 2, and record the fact that there is a shorter path between the starting and ending points of the walks.
\end{example}

Cycles that record ``back-and-forth" trips across edges or similarly uninformative walks explode in number as the $k$ and $\ell$ grow. Indeed, as we will see in Theorem \ref{thm:relation MH EMH DMH}, in some regimes along the $k = \ell$ line, such cycles make up the entirety of the magnitude homology. On the other hand, the walks around the triangle detect structure in the graph akin to ``convexity" near the walk.

In comparison, in \cite{leinster2016magnitude} the authors consider the space $\ell_1^n=\mathbb{R}\times_1 \cdots \times_1 \mathbb{R}$ (i.e. $\mathbb{R}^n$ with the taxicab metric) and prove a connection between the magnitude of a convex body $A \subseteq \ell_1^n$ and some intrinsic geometric measures of $A$ such as volume.  This suggests that if we simplify the chain complex, it may be easier to interpret the resulting homology theory in terms of informative structure in the graph and relate that structure back to properties of metric spaces.

\section{Eulerian magnitude homology}
\label{sec:EMH}

In the magnitude chain complex, the differential of a single tuple $\boundary(x_0,\dots,x_k)$ vanishes precisely when $\len(x_{i-1},\hat{x_i},x_{i+1}) < \len   (x_{i-1},x_i,x_{i+1})$ for each $i$.
In other words, every landmark in the tuple enforces a longer walk than would otherwise be necessary based on the structure of the graph. So, the graph contains some smaller substructure \emph{witnessed} by this walk, which suggests there may be a meaningful relationship between the rank of magnitude homology groups of a graph and subgraph counting problems.

However, as we observed in Example \ref{toyexampleMH}, the relationship between the size of the magnitude homology groups and these structures is obscured by the fact that the constituent walks may revisit regions of the graph. For example, if $x_0$ and $x_1$ are adjacent in $G$, $(x_0,x_1,x_0,x_1,x_0) \in MC_{5,4}(G).$ As we will see in Theorem \ref{thm:relation MH EMH DMH}, cycles made of such chains can dominate $MH_{k,k}(G)$ under certain circumstances.

\subsection{Definition and intuition}
With this motivation, we introduce a natural sub-complex of $MC_{k,l}(G)$. 

\begin{definition}
    Let $G$ be a graph. We say a $k$-trail $(x_0, \dots, x_k) \in T_{k,\ell}(G)$ is \emph{eulerian} if $x_i \neq x_j$ for all $i \neq j.$ Denote the set of eulerian $k$-trails of $G$ by $ET_{k,\ell}(G)\subseteq T_{k,\ell}(G).$ 
    
    We define the \emph{$(k,\ell)$-eulerian magnitude chain group}, $EMC_{k,\ell}(G) = \mathbb{Z}\langle ET_{k,\ell}(G)\rangle,$ to be the free abelian group generated by eulerian $k$-trails $(x_0,\dots,x_k)$ of $G$ of length $\ell$. Throughout, we will mildly abuse notation by thinking of elements of $ET_{k,\ell}(G)$ as chains in $EMC_{k,\ell}(G).$
    
Regarding $EMC_{k,\ell}(G)$ as a subgroup of $MC_{k,\ell}(G)$, it follows directly from the definition that $\boundary(EMC_{k,\ell}(G)) \subseteq EMC_{k-1,\ell}(G),$ so $EMC_{*,\ell}(G)$ is the \emph{eulerian magnitude chain complex}, a sub-chain complex of the standard magnitude chain complex for each $k$. By abuse, we also will write $\partial_{k,\ell}$ for the restriction 
 $\partial_{k,\ell}\vert_{EMC_{k,\ell}(G)}$ unless it would create confusion.
\end{definition}

It is worth pausing to explicitly observe that the term ``eulerian" here is used to indicate that no \emph{landmark} is repeated. This does not, in general, imply that a minimal length walk in $G$ that visits those landmarks is eulerian; at best, it guarantees that the minimal walk between any two successive landmarks is distinct from all others. 

\begin{remark}
    However, in the special case $k = \ell$, the number of edges in a minimal walk is one less than the number of landmarks.
    Thus, as G is simple, there is a unique minimal walk that is entirely specified by its landmarks and so must, indeed, be eulerian (and hamiltonian).
\end{remark}

The following simple observation about the differential on the eulerian magnitude chains will drive a great deal of what we do in this paper.

\begin{lemma}
    \label{lem:diff_zero_so_edge}
    Let $G = (V, E)$ be a graph, $k \geq 2.$ Fix some $i \in [k-1]$ and $\bar{x} = (x_0, x_1, \dots, x_k)\in ET_{k,k}(G) \subseteq EMC_{k,k}(G).$ Then  $\partial^i_{k,\ell}(\bar{x}) = 0$ if and only if $\{x_{i-1}, x_{i+1}\}\in E.$     
\end{lemma}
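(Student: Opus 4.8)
The plan is to compute directly the length of the deleted tuple $(x_0,\dots,\hat{x_i},\dots,x_k)$ and compare it to the target grading. Since $\bar{x}$ lies on the diagonal (we are in $ET_{k,k}(G)$, so the relevant target grading is $\ell=k$), I would first invoke the Remark preceding the lemma, or simply argue directly: $\len(\bar{x})=\sum_{j\in[k-1]_0}d(x_j,x_{j+1})=k$ is a sum of $k$ terms each at least $1$, so every consecutive distance $d(x_j,x_{j+1})$ equals exactly $1$. In particular $d(x_{i-1},x_i)=d(x_i,x_{i+1})=1$, where I note that both indices $i-1$ and $i+1$ lie in $[k]_0$ precisely because $k\geq 2$ and $i\in[k-1]$.

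Next I would use the definition of $\len$ as a sum of consecutive path distances to write
\[
\len(x_0,\dots,\hat{x_i},\dots,x_k) = \len(\bar{x}) - d(x_{i-1},x_i) - d(x_i,x_{i+1}) + d(x_{i-1},x_{i+1}) = k - 2 + d(x_{i-1},x_{i+1}).
\]
Before doing so I would check that the deleted tuple is a legitimate $(k-1)$-trail so that $\len$ means what it should: $x_{i-1}\neq x_{i+1}$ because $\bar{x}$ is eulerian, and $d(x_{i-1},x_{i+1})\leq d(x_{i-1},x_i)+d(x_i,x_{i+1}) = 2 < \infty$ by the triangle inequality. These same two facts pin $d(x_{i-1},x_{i+1})\in\{1,2\}$.

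Finally, by the definition of $\partial^i_{k,\ell}$, the term $\partial^i_{k,\ell}(\bar{x})$ vanishes exactly when $\len(x_0,\dots,\hat{x_i},\dots,x_k)\neq \ell = k$, i.e. exactly when $k-2+d(x_{i-1},x_{i+1})\neq k$, i.e. when $d(x_{i-1},x_{i+1})\neq 2$. Combined with $d(x_{i-1},x_{i+1})\in\{1,2\}$, this is equivalent to $d(x_{i-1},x_{i+1})=1$, which is precisely $\{x_{i-1},x_{i+1}\}\in E$. I do not expect any real obstacle here; the only points requiring care are the index bookkeeping at the ends of the trail (handled by $k\geq 2$ and $i\in[k-1]$) and confirming the deleted tuple is a genuine trail so the length formula applies termwise.
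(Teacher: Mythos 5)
Your proof is correct and takes essentially the same approach as the paper's: both observe that $\len(\bar{x})=k$ forces every consecutive distance $d(x_j,x_{j+1})$ to equal $1$, and then reduce the vanishing of $\partial^i_{k,k}(\bar{x})$ to whether $d(x_{i-1},x_{i+1})=1$, i.e.\ whether $\{x_{i-1},x_{i+1}\}\in E$. You make the length bookkeeping and the check that the deleted tuple is a genuine $(k-1)$-trail more explicit than the paper does, but the underlying argument is identical.
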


\begin{remark}
Note, this result does not hold for standard magnitude homology, because of cycles like $(0,1,0)$ in Example \ref{fig:toyexampleMH}, for which  Lemma \ref{lem:diff_zero_so_edge} does not hold because there is no "edge" from vertex 0 to itself. In Section \ref{subsec:relationship}, we will investigate this discrepancy in more detail. For now, we simply note that throughout the paper, results which build on Lemma \ref{lem:diff_zero_so_edge} are in general true only in the eulerian case.
\end{remark}

\begin{proof}
        Let $G = (V, E)$ be a graph, $k \geq 2,$ $i \in [k-1],$ and $\bar{x} = (x_0, x_1, \dots, x_k)\in EMC_{k,k}(G).$ As $\len((x_0, \dots, x_k)) = k$ and each vertex is distinct, we have $d(x_j, x_{j+1}) = 1,$ so $\{x_j, x_{j+1}\}\in E$ for each $j \in [k-1]_0.$ As $\partial^i_{k,k}(\bar{x}) \in ET_{k-1,k}(G),$ if $\partial^i_{k,k}(\bar{x}) = 0,$ then $\len(\partial^i_{k,k}(\bar{x})) < \len(\bar{x}) = k,$ which can only occur if removing the landmark $x_i$ shortens the walk, which occurs precisely when $\{x_{i-1}, x_{i+1}\} \in E.$
\end{proof}

We now move on to our principal object of study.
    
\begin{definition}
The \emph{$(k,\ell)$-eulerian magnitude homology group} of a graph $G$ is
\[
EMH_{k,\ell}(G) = H_k(EMC_{*,\ell}(G)) = \frac{\ker(\partial_{k,\ell})}{\im(\partial_{k+1,\ell})}.
\]
\end{definition}

In this paper, we will focus our attention on the case $k = \ell$ to facilitate explicit descriptions of the relationship between the structure of the eulerian magnitude homology and that of the graph. However, many of the tools we develop should be applicable to the study of these groups in the case $k < \ell.$

\begin{example}
	\label{ex:toyexampleEMH}
	Consider again the graph $G$ of Figure \ref{fig:toyexampleMH}. We will compute $EMH_{2,2}(G)$ to compare with the computation in Example \ref{toyexampleMH}.
	
    The eulerian chain group $EMC_{2,2}(G)$ is generated by  ten $2$-trails: (0,1,2), (0,2,1), (0,2,3), (1,0,2), (1,2,0), (1,2,3), (2,0,1), (2,1,0), (3,2,0), (3,2,1). On the other hand, $EMC_{1,2}(G)= MC_{1,2}(G)$ is generated by the same four elements: (0,3), (1,3), (3,0), (3,1), and $EMC_{3,2}(G) \subseteq MC_{3,2}(G) = \langle 0 \rangle.$
	So, we have that $EMH_{2,2}(G)=\ker(\partial_{2,2}),$ and the group is generated by those six elements in $MC_{2,2}(G)$ which do not visit vertex $3$, and thus are precisely those six possible walks along the vertices of the triangle with vertices 0, 1, and 2. 
	
\end{example}

\begin{remark}
	\label{EMHsubgroup}
	Many of the definitions and properties regarding magnitude homology in \cite{hepworth2015categorifying} and \cite{leinster2021magnitude} carry over directly to eulerian magnitude homology, as it is defined on a subcomplex of the original chain complex.
	In particular, one can check that $EMH_{0,0}(G)\cong MH_{0, 0}(G)$ and $EMH_{1,1}(G)\cong MH_{1,1}(G),$ since the generators of the groups necessarily satisfy the condition of not revisiting vertices. Thus, these eulerian magnitude homology groups also count the number of vertices and edges in $G,$ respectively. 
\end{remark}

Naively, Lemma \ref{lem:diff_zero_so_edge} and Example \ref{ex:toyexampleEMH}, along with our experience with $EMH_{0,0}$ and $EMH_{1,1},$ may suggest that the rank of $EMH_{2,2}(G)$ should provide a count of triangles -- three vertex cliques -- in $G.$ And, indeed, generators of $EMC_{2,2}(G)$ are $2$-trails $(x_0,x_1,x_2)$ in $G$ of length 2. By Lemma \ref{lem:diff_zero_so_edge}, we see that if $\partial_{2,2}(x_0,x_1,x_2) = \partial_{2,2}^1(x_0, x_1, x_2) = 0,$ then $\{x_0, x_2\} \in E$ and $G\vert_{\{x_0, x_1, x_2\}} \cong C_3.$ Further, any $2$-trail given by a permutation of these three vertices gives rise to a generator of $EMC_{2,2}(G)$ with zero differential. Conversely, if for some subset $\{x_0, x_1, x_2\} \subseteq V$ we have $G\vert_{\{x_0, x_1, x_2\}} \cong C_3,$ we will have all six of these generators in $\ker(\partial_{2,2}) \subseteq EMC_{2,2}(G).$ 

However, this is not the complete story. For, if $\partial_{2,2}(x_0, x_1, x_2) = (x_0, x_2),$ then for any trail $(x_0, x_3, x_2) \in ET_{2,2}(G)$ with $x_1 \neq x_3,$ we also have $\partial_{2,2}(x_0, x_3, x_2) = (x_0, x_2).$ Thus, $(x_0. x_1, x_2) - (x_0, x_3, x_2) \in \ker(\partial_{2,2}).$ However, the same will hold for any choice of another trail, $(x_0, x_3', x_2) \in ET_{2,2}(G).$

\begin{definition}
    \label{def:fan subgraph}
    Let $G= (V,E)$ be a graph and $x_i, x_j \in V$ so that $\{x_i,x_j\}\not \in E$.
    Let $V_{x_i,x_j} = \{v \in V \;\vert\; (x_i, v, x_j)\in ET_{2,2}(G)\} \subset V$. We call the full subgraph of $G$ induced by the vertex set $\{x_i,x_j\}\cup V_{x_i,x_j} \subset V$ the \emph{fan subgraph of $G$ at the pair $\{x_i,x_j\}$}, written $\text{Fan}_{\{x_i,x_j\}}$. 
    See Figure \ref{fig:fan subgraph} for an example.
\end{definition}

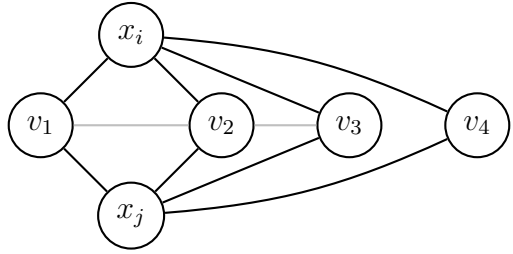
\begin{figure}[h]
        \centering
        \begin{tikzpicture}[node distance={17mm}, thick, main/.style = {draw, circle,minimum size=.85cm}]
	\node[main] (u1) {$v_1$}; 
	\node[main] (a) [above right of=u1] {$x_i$};  
	\node[main] (u2) [below right of=a] {$v_2$};
        \node[main] (b) [below right of=u1] {$x_j$};
        \node[main] (u3) [right of=u2] {$v_3$};
        \node[main] (u4) [right of=u3] {$v_4$};
            
	\draw (a) -- (u1);
	\draw (a) -- (u2);
        \draw (a) -- (u3);
        \path[every node/.style={font=\sffamily\small}]
			(a) edge [bend left=10] node {} (u4);
        \draw (b) -- (u1);
        \draw (b) -- (u2);
        \draw (b) -- (u3);
        \path[every node/.style={font=\sffamily\small}]
			(b) edge [bend right=10] node {} (u4);

        \draw[lightgray] (u1) -- (u2);
        \draw[lightgray] (u2) -- (u3);

	\end{tikzpicture} 
        \caption{Example of a fan subgraph induced by the vertex set $\{x_i, x_j, v_1,v_2,v_3,v_4\}$. Notice that any edges between the vertices $v_k$ may or may not be present; here $(v_1,v_2)$ and $(v_2,v_3)$ are drawn in grey to indicate they are incidental.}
        \label{fig:fan subgraph}
    \end{figure}

Now, for every pair of vertices $\{x_i,x_j\} \subseteq V$ so that $\{x_i, x_j\}\not\in E,$ there are $(|V_{x_i, x_j}| -1)$ linearly independent homology generators in $EMC_{2,2}(G)$ obtained by fixing $v \in V_{x_i, x_j}$ and allowing $v' \in V_{x_i, x_j} \setminus {v}$ to vary, producing cycles of the form $(x_i. v, x_j) - (x_i, v', x_j) \in \ker(\partial_{2,2}),$ $v,v' \in V_{x_i, x_j}$; other cycles of this form are linear combinations of those in this family. By symmetry, there are another $(|V_{x_i, x_j}| -1)$ generators for trails starting at $x_j$ and ending at $x_i.$ Letting the set $\{x_i, x_j\}$ vary, we obtain all other cycles in $EMH_{2,2}(G):$ 

\[
\text{rank}(EMH_{2,2}(G))=6\cdot c(G,C_3) + \sum_{\{x_i,x_j\}\in {V \choose 2}\setminus E} 2 \cdot (|V_{x_i, x_j}|-1).
\]

For each fan subgraph $\text{Fan}_{x_i,x_j}$ of $G,$ the full subgraph on vertices $\{x_i, x_j, v, v'\}$ is isomorphic either to $C_4$ or to \[F_4 = \left(\{v_0, v_1, v_2, v_3\}, \{\{v_0, v_1\}, \{v_0, v_3\}, \{v_0, v_2\}, \{v_1, v_2\}, \{v_2, v_3\}\}\right),\] the cycle graph on four vertices with one diagonal added; here, the added edge is $\{v, v'\}.$ In total, there are ${|V_{x_i, x_j}| \choose 2}$ such subgraphs. The difficulty in obtaining a complete count of these subgraphs comes from the fact that if the full subgraph supported on $\{x_i,x_j,v,v'\}$ is isomorphic to $C_4$, then $\text{Fan}_{x_i,x_j}$ and $\text{Fan}_{v,v'}$ intersect, and there is no general way to determine the intersection pattern of such subgraphs from a naive count of homology generators.

However, we can apply the above discussion, accounting for symmetry and the fact that $(n-1) \leq {n \choose 2}$ for $n \geq 2$, to provide a coarse bound on a subgraph count using $EMH_{2,2}(G).$
\begin{lemma}
\label{lem:emh22}
    Let $G = (V, E)$ be a graph. Write $C_3$ and $C_4$ for the cycle graphs on three and four vertices respectively, 
    Then 
    \[\rank(EMH_{2,2}(G)) \leq 6\cdot c(G, C_3) + 4\cdot c(G, C_4) + 2\cdot c(G, F_4).\]
\end{lemma}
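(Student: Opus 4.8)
The strategy is to start from the exact rank formula derived in the preceding discussion,
\[
\rank(EMH_{2,2}(G)) = 6\cdot c(G,C_3) + \sum_{\{x_i,x_j\}\in {V \choose 2}\setminus E} 2\cdot(|V_{x_i,x_j}| - 1),
\]
and bound the summation term by counting the relevant four-vertex full subgraphs. First I would use the elementary inequality $n - 1 \le \binom{n}{2}$ valid for $n \ge 2$ (and note the summand is $0$ when $|V_{x_i,x_j}| \in \{0,1\}$, so we may assume $n = |V_{x_i,x_j}| \ge 2$) to replace $(|V_{x_i,x_j}| - 1)$ with $\binom{|V_{x_i,x_j}|}{2}$, giving
\[
\rank(EMH_{2,2}(G)) \le 6\cdot c(G,C_3) + \sum_{\{x_i,x_j\}\in {V \choose 2}\setminus E} 2\cdot\binom{|V_{x_i,x_j}|}{2}.
\]

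The key observation is that $\binom{|V_{x_i,x_j}|}{2}$ counts the number of unordered pairs $\{v,v'\} \subseteq V_{x_i,x_j}$, and for each such pair the full subgraph $G\vert_{\{x_i,x_j,v,v'\}}$ contains the four edges $\{x_i,v\},\{x_i,v'\},\{x_j,v\},\{x_j,v'\}$ but \emph{not} the edge $\{x_i,x_j\}$ (by hypothesis on the pair), so it is isomorphic to either $C_4$ (if $\{v,v'\}\notin E$) or $F_4$ (if $\{v,v'\}\in E$) — these being exactly the two graphs on four vertices containing a $C_4$ through a fixed non-adjacent pair, with or without the complementary diagonal. Thus the double sum $\sum_{\{x_i,x_j\}\notin E} \binom{|V_{x_i,x_j}|}{2}$ equals the number of pairs $(\{x_i,x_j\}, \{v,v'\})$ of disjoint unordered vertex pairs such that $G\vert_{\{x_i,x_j,v,v'\}}\cong C_4$ or $F_4$ and $\{x_i,x_j\}$ is the non-edge (resp.\ the non-diagonal pair) of the four-cycle.

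The remaining step is a counting argument: for each full subgraph $H \le G$ with $H \cong C_4$, how many times does it appear in this sum? A copy of $C_4$ has exactly two pairs of opposite (non-adjacent) vertices, and either can play the role of $\{x_i,x_j\}$ with the other playing the role of $\{v,v'\}$, so each $C_4$ is counted $2$ times; hence the $C_4$ contribution to the double sum is $2\cdot c(G,C_4)$. For $H \cong F_4$: here there is a unique non-edge, namely the pair $\{v_1,v_3\}$ in the notation of the definition of $F_4$ (the two degree-$2$ vertices), and this pair \emph{must} be $\{x_i,x_j\}$ since it is the only non-adjacent pair available to be the non-edge; the complementary pair $\{v_0,v_2\}$ is the diagonal $\{v,v'\}\in E$, which is consistent with the $F_4$ case. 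So each $F_4$ is counted exactly once, contributing $c(G,F_4)$. Combining, $\sum_{\{x_i,x_j\}\notin E}\binom{|V_{x_i,x_j}|}{2} = 2\cdot c(G,C_4) + c(G,F_4)$, and multiplying by $2$ yields the claimed bound.

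\textbf{Main obstacle.} The only delicate point is making the counting bookkeeping airtight: one must verify that the pair $(\{x_i,x_j\},\{v,v'\})$ is reconstructible from the abstract isomorphism type together with the labelling of which pair is the non-edge, and in particular that for $F_4$ there is genuinely no ambiguity (the two vertices of degree $2$ in $F_4$ are forced, and they are the unique non-adjacent pair). One should also double-check the edge cases $|V_{x_i,x_j}| \le 1$ contribute nothing on either side, and that we are not over- or under-counting orientations — but note the factor $2$ in front of the sum already absorbs the start/end point symmetry discussed before the lemma, and is kept as a global factor throughout, so no orientation subtlety enters the $C_4$/$F_4$ count itself. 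Everything else is the routine inequality $n-1\le\binom n2$ and a disjoint-union decomposition of an index set.
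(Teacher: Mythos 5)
Your proof is correct and follows essentially the same approach as the paper's (which is given informally in the discussion preceding the lemma rather than in a formal proof environment): start from the exact rank formula $\rank(EMH_{2,2}(G))=6\cdot c(G,C_3) + \sum_{\{x_i,x_j\}\notin E} 2(|V_{x_i,x_j}|-1)$, apply $(n-1)\leq\binom n2$ to bound the sum by $2\sum\binom{|V_{x_i,x_j}|}{2}$, identify each pair $\{v,v'\}\subseteq V_{x_i,x_j}$ with a full subgraph isomorphic to $C_4$ or $F_4$, and account for the symmetry that a $C_4$ contains two non-adjacent pairs (so is hit twice) while $F_4$ has a unique one (so is hit once). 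You have merely made explicit the bookkeeping the paper summarizes as ``accounting for symmetry and the fact that $(n-1)\leq\binom n2$,'' so there is no genuine divergence.
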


\begin{remark}
    It is worth noting that Lemma \ref{lem:emh22} is analogous to some results proved in \cite{grigor2012homologies}, where the authors show a connection between the path homology groups of (di)graphs and substructure like cliques, binary hypercubes and other subgraphs reminiscent of polyhedra. However, the authors have not investigated any potential connections between these results.
\end{remark} 

Generalizing Lemma \ref{lem:emh22} to higher $k$ becomes difficult because of the increasingly complicated collection of isomorphism types of graphs which can support eulerian trails. 
A first result in this direction can be directly extracted from the proof of Lemma \ref{lem:emh22}. We observed that generators of $EMC_{2,2}(G)$ with zero differential were precisely those eulerian trails that walk around boundaries of 3-cliques. While this is not a complete characterization of such generators in $EMC_{k,k}(G)$ in general, it is the case that walks around cliques always have zero differential.

\begin{lemma}
	\label{lem:cliques bound}
    Let $G$ be a graph, and let $Z \subseteq ET_{k,k}(G)\subseteq EMC_{k,k}(G)$ be the collection of generators $\bar{x}$  for which $\partial_{k,k}(\bar{x}) = 0$. Write $\Delta_{k+1}$ for the complete graph on ${k+1}$ vertices, then
    \[c(G, \Delta_{k+1}) \leq \left\lfloor\frac{|Z|}{{k+1}!}\right\rfloor.\]
\end{lemma}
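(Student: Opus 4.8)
The plan is to exhibit, for each $(k+1)$-clique in $G$, a collection of $(k+1)!$ generators in $Z$, show these collections are disjoint for distinct cliques, and conclude the inequality by counting.

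\medskip

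\noindent\textbf{Step 1: Cliques produce generators in $Z$.} Suppose $W = \{w_0, \dots, w_k\} \subseteq V$ with $G\vert_W \cong \Delta_{k+1}$. For any ordering, i.e. any bijection $\sigma\colon [k]_0 \to W$, consider the tuple $\bar{x}_\sigma = (\sigma(0), \sigma(1), \dots, \sigma(k))$. Since all vertices of $W$ are pairwise adjacent, consecutive landmarks are distinct and at distance $1$, so $\len(\bar{x}_\sigma) = k$ and $\bar{x}_\sigma \in ET_{k,k}(G) \subseteq EMC_{k,k}(G)$. Moreover, for each $i \in [k-1]$, the vertices $\sigma(i-1)$ and $\sigma(i+1)$ are adjacent (again because $G\vert_W$ is complete), so by Lemma \ref{lem:diff_zero_so_edge}, $\partial^i_{k,k}(\bar{x}_\sigma) = 0$ for all $i$, hence $\partial_{k,k}(\bar{x}_\sigma) = 0$ and $\bar{x}_\sigma \in Z$. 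There are exactly $(k+1)!$ such orderings, giving $(k+1)!$ distinct generators in $Z$ associated to $W$ (distinct because the tuples are literally different as ordered lists, and an eulerian trail's landmark list has no repeats so recovers its ordering).

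\medskip

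\noindent\textbf{Step 2: Different cliques give disjoint families.} If $W \neq W'$ are two $(k+1)$-subsets each inducing a complete subgraph, then no ordering of $W$ equals an ordering of $W'$, since the underlying set of landmarks $L(\bar{x}_\sigma) = W$ is an invariant of the tuple. Hence the families of generators attached to distinct cliques are pairwise disjoint subsets of $Z$.

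\medskip

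\noindent\textbf{Step 3: Count.} Steps 1 and 2 give an injection from (orderings of) $(k+1)$-cliques into $Z$, so $(k+1)! \cdot c(G, \Delta_{k+1}) \leq |Z|$, i.e. $c(G, \Delta_{k+1}) \leq |Z| / (k+1)!$. Since $c(G, \Delta_{k+1})$ is an integer, we may take the floor: $c(G, \Delta_{k+1}) \leq \lfloor |Z| / (k+1)! \rfloor$.

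\medskip

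\noindent I do not expect any serious obstacle here: the content is entirely in the clean application of Lemma \ref{lem:diff_zero_so_edge} to the complete graph, where every pair $\{\sigma(i-1), \sigma(i+1)\}$ is automatically an edge. The only point requiring a word of care is that $EMC_{k,k}$ is generated by $ET_{k,k}(G)$ as a free abelian group, so a generator $\bar{x}$ is genuinely an element of $Z$ (not merely that its class vanishes), and that distinct orderings give distinct free-basis elements — both immediate from the definitions. One might also remark that the inequality is typically far from tight, since $Z$ contains many non-clique generators (e.g. those walking around $F_4$-type subgraphs as in the $k=2$ case), which is presumably why the statement is phrased as a bound rather than an equality.
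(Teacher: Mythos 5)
Your argument is correct and follows the same approach as the paper: for each $(k+1)$-clique, all $(k+1)!$ orderings of its vertices yield distinct eulerian $k$-trails of length $k$ whose differentials vanish term-by-term (via Lemma \ref{lem:diff_zero_so_edge}), and the families attached to distinct cliques are disjoint since the landmark set is recoverable from the tuple. You are slightly more explicit than the paper about disjointness and the floor step, but the substance is identical.
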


\begin{proof}
    Let $G$ and $Z$ be as in the statement of the lemma.   
	Observe that if vertices $x_0,\dots,x_k$ form a $(k+1)$-clique of $G,$ any $k$-trail passing through all nodes $x_0,\dots,x_k$ in any order has length $k$. Thus, for every permutation $\sigma \in \Sigma_k, (x_{\sigma(0)}, \dots, x_{\sigma(k)}) \in ET_{k,k}(G).$ Further, since all edges among the constituent vertices are present, removing any landmark from such a trail reduces its length. so $\partial_{k,k}(x_{\sigma(0)}, \dots, x_{\sigma(k)}) = 0.$ Thus, each such trail is an element of $Z,$ and the number of $(k+1)$-cliques in $G$ is bounded above by $\left\lfloor\frac{|Z|}{{k+1}!}\right\rfloor.$
\end{proof}

\begin{remark}
While these results provide us with some intuition about the meaning of the simplest classes in the eulerian magnitude homology groups, they also serve as a cautionary tale from a computational perspective. Any naive attempt to compute magnitude homology groups will run into clique enumeration problems, which grow exponentially in complexity with $k$ for even moderately sized $n.$ 
\end{remark}

One lesson that we should take away from this investigation is that the relationship between the combinatorics of the eulerian magnitude homology computation and that of subgraph counting are complicated by the fact that the presence or absence of some edges is irrelevant to computations in the differential. In the next section, we will develop language aimed at mitigating this difficulty.

\subsection{Families of graphs that support eulerian magnitude cycles}

We will now develop the language which we will need to fully describe the structure underlying nontrivial cycles in $EMH_{k,k}(G).$ To do so, we will study families of graphs that can support cycles of various forms. To this end, we introduce a useful way to describe families of graphs adopted from \cite{yu2009computing}.
We have chosen to slightly alter the terminology used in that paper to more clearly distinguish between individual graphs and collections thereof. 

\begin{definition}[c.f. \cite{yu2009computing}]
A \emph{class graph} $\mathcal{G} = (V,E_S,E_B)$ consists of a vertex set $V$ and two disjoint edge sets $E_S, E_B \subset {V \choose 2}.$ A \emph{complete} class graph is one for which $E_S \cup E_B = {V \choose 2}.$ 
\end{definition}

The sets $E_S$ and $E_B$ provide us with rules for constructing a family of graphs: edges in $E_S$ are mandatory, while edges in $E_B$ are optional, and edges that appear in neither $E_S$ nor $E_B$ cannot be included.

\begin{definition}
\label{def:alpha(G) and omega(G)}
Given a class graph $\mathcal{G} = (V, E_S, E_B)$, the set of \emph{graphs of class $\mathcal{G},$} $\Gamma(\mathcal{G}),$ consists of all graphs $G = (V, E)$ such that $E_S \subseteq E \subseteq E_S \cup E_B.$ Let $\mathcal{G}=(V,E_S,E_B)$ be a class graph. We denote the minimal and maximal graphs (under inclusion) in $\Gamma(\mathcal{G})$ as $\alpha(\mathcal{G})=(V,E_S)$ and $\omega(\mathcal{G})=(V,E_S \cup E_B)$.
\end{definition}

We visualize a class graph $\mathcal{G}$ by drawing edges in $E_S$ as solid lines and those in $E_B$ as dashed lines; graphs in $\Gamma(\mathcal{G})$ must have all solid edges and may have any dashed edges, but can have no other edges. See Figure \ref{fig:example representation class graph} for relevant examples.

\begin{figure}[h]
\centering
\begin{minipage}{0.18\linewidth}
\centering
    \begin{tikzpicture}[node distance={15mm}, thick, main/.style = {draw, circle}]
 		\node[main] (0) {}; 
 		\node[main] (1) [below of=0] {};  
 		\node[main] (2) [right of=0] {};
 		\node[main] (3) [below of=2] {};
	
 		\draw[dashed] (0) -- (1);

 		\draw[dashed] (0) -- (2);	
            \draw[dashed] (1) -- (2);
 		\draw (1) -- (3);
 		\draw[dashed] (2) -- (3);
 						
    \end{tikzpicture} 
\end{minipage} 
\hfill
\begin{minipage}{0.18\linewidth}
    \centering
    \begin{tikzpicture}[node distance={15mm}, thick, main/.style = {draw, circle}]
  		\node[main] (0) {}; 
 		\node[main] (1) [below of=0] {};  
 		\node[main] (2) [right of=0] {};
 		\node[main] (3) [below of=2] {};
	
 		\draw (1) -- (3);
 						
    \end{tikzpicture} 

\end{minipage} 
\hfill
\begin{minipage}{0.18\linewidth}
    \centering
    \begin{tikzpicture}[node distance={15mm}, thick, main/.style = {draw, circle}]
 		\node[main] (0) {}; 
 		\node[main] (1) [below of=0] {};  
 		\node[main] (2) [right of=0] {};
 		\node[main] (3) [below of=2] {};
	
 		\draw (0) -- (1);
 		\draw (0) -- (2);	
            \draw (1) -- (2);
 		\draw (1) -- (3);
 		\draw (2) -- (3);
 						
    \end{tikzpicture} 
\end{minipage} 
\hfill
\begin{minipage}{0.18\linewidth}
    \centering
    \begin{tikzpicture}[node distance={15mm}, thick, main/.style = {draw, circle}]
 		\node[main] (0) {}; 
 		\node[main] (1) [below of=0] {};  
 		\node[main] (2) [right of=0] {};
 		\node[main] (3) [below of=2] {};
	
 		\draw (0) -- (1);
 		\draw (0) -- (2);	
 		\draw (1) -- (3);
 		\draw (2) -- (3);
 						
    \end{tikzpicture} 
\end{minipage} 
    \caption{(far left) A class graph $\mathcal{G}$. (middle left) the minimal graph $\alpha(\mathcal{G})\in \Gamma(\mathcal{G})$. (middle right) the maximal graph $\omega(\mathcal{G})\in \Gamma(\mathcal{G})$. (far right)
    a graph $G \in \Gamma(\mathcal{G})$.}
\label{fig:example representation class graph}
\end{figure}
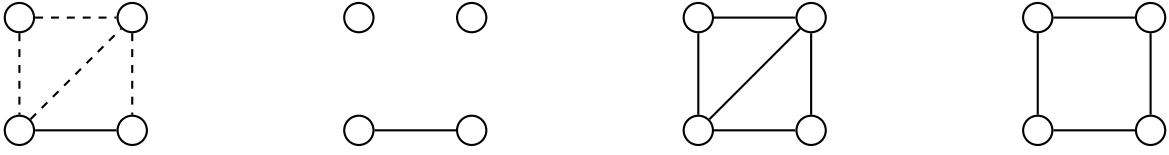

With this language, we can characterize which graphs support cycles in $EMC_{k,k}(G).$ We begin with the simple case of a single generator with zero differential. Recall from Definition \ref{def:ktrail} that for a $k$-trail $\bar{x}$, $L(\bar{x})$ is the corresponding set of landmarks. In the current setting, where $k = \ell,$ we have that $L(\bar{x})$ is precisely the set of vertices in the unique corresponding minimal walk in $G$, so in this case we will often call $L(\bar{x})$ the \emph{support} of $\bar{x}.$

\begin{lemma}
    \label{lem:class_graph_one_gen}
    Let $G=(V,E)$ be a graph. Suppose  $\bar{x} = (x_0, \dots x_k) \in ET_{k,k}(G) \subseteq EMC_{k,k}(G)$ has $\partial_{k,k}\bar{x} = 0.$ Consider the complete class graph \[\mathcal{H}(\{\bar{x}\}) = \left(L(\bar{x}), E_S = \{\{x_i, x_{i+1}\}\}_{i\in[k-1]_0} \cup \{\{x_{i-1}, x_{i+1}\}\}_{i\in[k-1]}, E_B = {L(\bar{x}) \choose 2} \setminus E_S\right).\] Then $G\vert_{L(\bar{x})} \in \Gamma(\mathcal{H}(\{\bar{x}\})).$ 
\end{lemma}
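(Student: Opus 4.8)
The plan is to check directly the defining condition of Definition \ref{def:alpha(G) and omega(G)}: writing $E_S$ and $E_B$ as in the statement, I must show $E_S \subseteq E\bigl(G\vert_{L(\bar{x})}\bigr) \subseteq E_S \cup E_B$. The second inclusion is free, since $\mathcal{H}(\{\bar{x}\})$ is by construction a \emph{complete} class graph, so $E_S \cup E_B = \binom{L(\bar{x})}{2}$ and the edge set of any full subgraph on $L(\bar{x})$ lies inside $\binom{L(\bar{x})}{2}$. So all the work is in the first inclusion; and since every pair listed in $E_S$ already consists of two landmarks of $\bar{x}$, it suffices to show each such pair is an edge of $G$, as it will then automatically be an edge of the full subgraph $G\vert_{L(\bar{x})}$.

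I would handle the two families making up $E_S$ separately. For a ``consecutive'' pair $\{x_i, x_{i+1}\}$, $i \in [k-1]_0$: because $\bar{x} \in ET_{k,k}(G)$ we have $k = \len(\bar{x}) = \sum_{i\in[k-1]_0} d(x_i, x_{i+1})$, a sum of $k$ terms each of which is at least $1$ (consecutive landmarks of a trail are distinct); hence every $d(x_i, x_{i+1}) = 1$, i.e.\ $\{x_i, x_{i+1}\} \in E$. This is exactly the computation carried out in the proof of Lemma \ref{lem:diff_zero_so_edge}.

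For a ``skip-one'' pair $\{x_{i-1}, x_{i+1}\}$, $i \in [k-1]$: I would first upgrade the hypothesis $\partial_{k,k}\bar{x} = 0$ to the statement that $\partial^i_{k,k}(\bar{x}) = 0$ for every individual $i \in [k-1]$. This follows because, for $i < j$ in $[k-1]$, the tuples $(x_0,\dots,\hat{x_i},\dots,x_k)$ and $(x_0,\dots,\hat{x_j},\dots,x_k)$ are distinct generators of $EMC_{k-1,k}(G)$ — they already disagree in their $i$-th entry, which is $x_{i+1}$ for the first and $x_i$ for the second, and $x_i \neq x_{i+1}$ — so in $\partial_{k,k}\bar{x} = \sum_{i\in[k-1]} (-1)^i \partial^i_{k,k}(\bar{x})$ the nonzero summands are $\pm$ pairwise-distinct basis vectors, and such a signed sum vanishes only if there are no nonzero summands at all. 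Applying Lemma \ref{lem:diff_zero_so_edge} to each $i$ then yields $\{x_{i-1}, x_{i+1}\} \in E$, which finishes the proof that $E_S \subseteq E(G\vert_{L(\bar{x})})$.

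There is no genuine obstacle here; the argument is essentially a bookkeeping repackaging of Lemma \ref{lem:diff_zero_so_edge} together with the length computation. The only step that merits a sentence of care — and the only place a reader might pause — is the passage from the vanishing of the signed sum to the vanishing of each face map, which rests on the observation that the nonzero faces $\partial^i_{k,k}(\bar{x})$ are pairwise-distinct basis elements; this is (quietly) one more use of the landmarks of $\bar{x}$ being distinct.
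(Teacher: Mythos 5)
Your proof is correct and follows the same line as the paper's: consecutive landmarks of an eulerian $k$-trail of length $k$ must be joined by edges, and Lemma~\ref{lem:diff_zero_so_edge} supplies the skip-one edges $\{x_{i-1},x_{i+1}\}$. The one point you treat more carefully than the paper's (quite terse) proof is the passage from $\partial_{k,k}\bar{x}=0$ to $\partial^i_{k,k}\bar{x}=0$ for each individual $i\in[k-1]$, which the paper leaves implicit; your justification — that the nonzero face maps land on pairwise-distinct generators of $EMC_{k-1,k}(G)$, so the signed sum cannot cancel — is exactly the missing observation, and it relies only on $x_i\neq x_{i+1}$, which holds for any $k$-trail.
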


\begin{proof}
	Let $G=(V,E)$ be a graph, and $\bar{x} = (x_0, \dots x_k) \in EMC_{k,k}(G)$ a $k$-trail. So, every edge $\{x_i, x_i+1\}, i \in [k-1]_0$ is in $E$. By Lemma \ref{lem:diff_zero_so_edge}, for every $i\in [k-1]$ such that $\partial_{k,k}^i(\bar{x}) = 0$, the edge $\{x_{i-1},x_{i+1}\}$ is in $E$. Thus, if $\partial_{k,k}(\bar{x}) = 0$,  we have $\alpha(\mathcal{H}(\{\bar{x}\})) \subseteq G\vert_{L(\bar{x})},$ so $G\vert_{L(\bar{x})} \in \Gamma(\mathcal{H}(\{\bar{x}\})).$ \qedhere	    
\end{proof}

\begin{figure}[h]
    \begin{minipage}{0.45\linewidth}
	\centering
	\begin{tikzpicture}[node distance={15mm}, thick, main/.style = {draw, circle}]
			\node[main] (0) {$x_0$}; 
			\node[main] (1) [above right of=0] {$x_1$};  
			\node[main] (2) [below right of=1] {$x_2$};
			\node[main] (3) [above right of=2] {$x_3$};
			\node[main] (4) [below right of=3] {$x_4$};
			\node[main] (5) [above right of=4] {$x_5$};
			\draw (0) -- (1);
			\draw (1) -- (2);
			\draw (2) -- (3);
			\draw (3) -- (4);
			\draw (4) -- (5);
			
			\draw[lightgray] (0) -- (2);
			\draw[lightgray] (1) -- (3);
			\draw[lightgray] (2) -- (4);
			\draw[lightgray] (3) -- (5);

			\draw[dashed] (0) -- (3);
			\draw[dashed] (1) -- (4);
            \draw[dashed] (2) -- (5);

           \path[every node/.style={font=\sffamily\small}]
			(0) edge [bend right, dashed] node {} (4);
            \path[every node/.style={font=\sffamily\small}]
			(0) edge [bend right=75, dashed] node {} (5);
            \path[every node/.style={font=\sffamily\small}]
			(1) edge [bend left, dashed] node {} (5);
    \end{tikzpicture} 
    \end{minipage}
    \begin{minipage}{0.45\linewidth}
		\centering
		\begin{tikzpicture}[node distance={15mm}, thick, main/.style = {draw, circle}]
			\node[main] (0) {$x_0$}; 
			\node[main] (1) [above right of=0] {$x_1$};  
			\node[main] (2) [below right of=1] {$x_2$};
			\node[main] (3) [above right of=2] {$x_3$};
			\node[main] (4) [below right of=3] {$x_4$};
			\node[main] (5) [above right of=4] {$x_5$};
			\draw (0) -- (1);
			\draw (1) -- (2);
			\draw (2) -- (3);
			\draw (3) -- (4);
			\draw (4) -- (5);
			
			\draw[lightgray] (0) -- (2);
			\draw[lightgray] (1) -- (3);
			\draw[lightgray] (2) -- (4);
			\draw[lightgray] (3) -- (5);

        \end{tikzpicture} 
    \end{minipage}
    
    \caption{(left) The class graph $\mathcal{H} = \mathcal{H}(\{(x_0, \dots, x_5)\}).$ (right) The minimal element $\alpha(\mathcal{H}) \in \Gamma(\mathcal{H}).$ We have  $\partial_{5,5}(x_0,\dots,x_5) = 0$ in $EMC_{k,k}(G),$ precisely when $G\vert_{\{x_0, \dots, x_5\}} \in \Gamma(\mathcal{H}).$ Black edges lie in the support of the trail, while gray edges must be present to force terms in the differential to be zero. Dashed edges do not play a role in the computation of the differential.}
    \label{fig:subgraph_emh}
\end{figure}
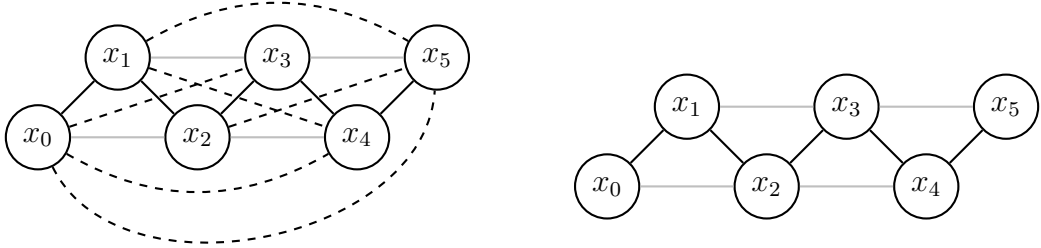

We now consider the somewhat more intricate combinatorics that arise for linear combinations of generators whose differentials cancel. The first non-trivial case, described in the following example, is instructive. 

\begin{example}\label{ex:two_trail_cycle_graph}
    Let $G=(V, E)$ be a graph, and let $\bar{x}^i = (x_0^i, \dots, x_k^i), i=1, 2,$ be trails in $\Delta_V.$ We now extend the construction of $\mathcal{H}(\{\bar{x}\})$ from Lemma \ref{lem:class_graph_one_gen} to construct a class graph $\mathcal{H}(\{\bar{x}^1, \bar{x}^2\}) = (W^{\{1,2\}}, E_S^{\{1,2\}}, E_B^{\{1,2\}})$ so that if $G\vert_{W^{\{1,2\}}} \in \mathcal{H}(\{\bar{x}^1, \bar{x}^2\}),$ then $\bar{x}^1$ and $\bar{x}^2$ are generators of $EMC_{k,k}(G)$ for which  $\partial_{k,k}(\bar{x}^i) \neq 0, i=1, 2$ but $\partial_{k,k}(\bar{x}^1 - \bar{x}^2) = 0.$
    
    From the definition of the differential, if $\bar{x}^1 \neq \bar{x}^2$ and $\partial_{k,k}(\bar{x}^1 - \bar{x}^2) = 0$ then both trails agree in all landmarks except one, say $x_r^1 \neq x_r^2$ for some $r \in [k-1],$ and the vertex $x_r^2$ cannot appear as a landmark in $\bar{x}^1$ nor vice versa.
    Indeed, suppose they differ in two landmarks, say $x_r^1 \neq x_r^2$ and $x_s^1 \neq x_s^2$ for some $r,s \in [k-1]$, and say $r<s$.
    Then if we compute the boundary $\partial_{k,k}(\bar{x}^1 - \bar{x}^2)$ we get
    \begin{align*}
        \partial_{k,k}(\bar{x}^1 - \bar{x}^2) &= \sum_{i=1}^{k-1} (-1)^i \partial_{k,k}^i(\bar{x}^1 - \bar{x}^2) \\
        &= (-1)^r \partial_{k,k}^r(\bar{x}^1 - \bar{x}^2) + (-1)^s \partial_{k,k}^s(\bar{x}^1 - \bar{x}^2) \\
        &= (-1)^r ((x_0^1,\dots, \hat{x}_r^1, \dots,x_s^1, \dots, x_k^1 ) - (x_0^2,\dots, \hat{x}_r^2, \dots,x_s^2, \dots, x_k^2 )) \\
        &+ (-1)^r ((x_0^1,\dots, x_r^1, \dots,\hat{x}_s^1, \dots, x_k^1 ) - (x_0^2,\dots, x_r^2, \dots,\hat{x}_s^2, \dots, x_k^2 )) \\
        &\neq 0,
    \end{align*}
    because we assumed $x_r^1 \neq x_r^2$ and $x_s^1 \neq x_s^2$, and thus the subtuples do not simplify.
     Note that, in particular, if this differential vanishes then $x_0^1 = x_0^2$ and $x_k^1 = x_k^2,$ so the trails have the same starting and ending points.
    
    Let $\mathcal{H}(\{\bar{x}^1\}) = (W^{\{1\}}, E^{\{1\}}_S, E^{\{1\}}_B)$ be the class graph of Lemma \ref{lem:class_graph_one_gen}. The set $E_S$ necessarily contains all of the edges in the support of both trails except $\{x_{r-1}^1, x_r^2\}$ and $\{x_r^2, x_{r+1}^1.\}$
    Further, per the proof of Lemma \ref{lem:class_graph_one_gen}, the edges already present in $\mathcal{H}(\{\bar{x}^1\})$ enforce $\partial_{k,k}^i(\bar{x}^1) = 0, i \neq r,$ and $\partial_{k,k}^i(\bar{x}^2) = 0$ for $i \neq r-1, r, r+1,$ due to agreement of the trails away from these vertices. However, we must introduce new edges to ensure $\partial_{k,k}^{r-1}(\bar{x}^2) = \partial_{k,k}^{r+1}(\bar{x}^2) = 0.$ So, we define $\mathcal{H}(\{\bar{x}^1, \bar{x}^2\}) = (W^{\{1,2\}}, E_S^{\{1,2\}}, E_B^{\{1,2\}}),$ where 
    \begin{align*}W^{\{1,2\}} = &W^{\{1\}} \cup \{x_r^2\}\\E_S^{\{1,2\}} = &\left(E_S^{\{1\}} \cup \{\{x_{a}^1, x_r^2\} \;\colon\; a \in \{r-2, r-1, r+1, r+2\} \cap [k]_0\}\right)\setminus \{ \{x_{r-1}^1, x_{r+1}^1\}\}\\
    E_B^{\{1,2\}}=& {W^{\{1,2\}} \choose 2} \setminus \left(E_S^{\{1,2\}} \cup \{\{x_{r-1}^1, x_{r+1}^1\}\}\right) \end{align*}

    See Figure \ref{fig:other kernel EMH_2,2} for an illustration of the portion of this new class graph that differs from $\mathcal{H}(\{\bar{x}^1\}).$ The new vertex $x_r'$ and the two new edges $\{x_{r-1}^1, x_r^2\}$ and $\{x_{r+1}^1, x_r^2\}$ support the trail $\bar{x}^2$ and enforce the agreement of $\partial_{k,k}^r$ on the two generators. The other one or two new edges are diagonals in newly introduced subgraphs isomorphic to $C_4,$ and so are added to enforce that $\partial_{k,k}^{r-1}(\bar{x}^i)=0$ and $\partial_{k,k}^{r+1}(\bar{x}^i)=0,$ as needed. Finally, by Lemma \ref{lem:diff_zero_so_edge}, the edge $\{x_{r-1}, x_{r+1}\}$ must be absent from $G$ to ensure that $\partial_{k,k}^r(\bar{x}^1) = \partial_{k,k}^r(\bar{x}^2) \neq 0$. Per Lemma \ref{lem:class_graph_one_gen}, all other terms in the differential of both chains are zero due to edges in $E_S^{\{1\}}.$ 
    \label{ex:class_graph_construction}
\end{example}
   
\begin{figure}[h]
\begin{minipage}{\linewidth}
    \centering
	\begin{tikzpicture}[node distance={15mm}, thick, main/.style = {draw, circle,minimum size=1.2cm}]		
		\node[main] (1) {$x_{r-2}$};  
		\node[main] (2) [below right of=1] {$x_{r-1}$};
		\node[main] (3) [above right of=2] {$x_r$};
		\node[main] (4) [below right of=3] {$x_{r+1}$};
		\node[main] (5) [above right of=4] {$x_{r+2}$};
		\node[main] (6) [below right of=2] {$x_r'$};

		\draw (1) -- (2);
		\draw (2) -- (3);
		\draw (3) -- (4);
           \draw (4) -- (5);
			
		\draw[black] (4) -- (6);
		\draw[black] (2) -- (6);
		\draw[lightgray] (1) -- (3);
		\draw[lightgray] (3) -- (5);
		\draw[dashed] (3) -- (6);
			
		\path[every node/.style={font=\sffamily\small}]
			(1) edge [bend right=50, lightgray] node {} (6);
	    \path[every node/.style={font=\sffamily\small}]	
            (6) edge [bend right=50, lightgray] node {} (5);
        \path[every node/.style={font=\sffamily\small}]
			(2) edge [bend left=75, dashed] node {} (5);
	    \path[every node/.style={font=\sffamily\small}]
			(1) edge [bend left=75, dashed] node {} (4);
	    \path[every node/.style={font=\sffamily\small}]
			(1) edge [bend left=60, dashed] node {} (5);

	\end{tikzpicture} 
\end{minipage}
	\captionof{figure}{Neighborhood $U \subseteq W^{\{1,2\}}$ of the vertices $x_r$ and $x_r'$ in the class graph $\mathcal{H}(\{\bar{x}^1, \bar{x}^2\}).$ If $G|_W \in \Gamma(\mathcal{H}(\{\bar{x}^1, \bar{x}^2\})),$ then $\partial_{k,k}\left(\bar{x}^1 - \bar{x}^2\right) = 0$. Outside of this neighborhood, the class graph is identical to $\mathcal{H}(\bar{x})$ from Figure \ref{fig:subgraph_emh}. Black edges lie in the support of the trail, while gray edges must be present to force terms in the differential to be zero.}
	\label{fig:other kernel EMH_2,2}
\end{figure}
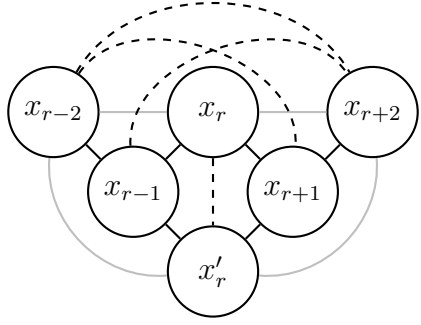

Our goal now is to generalize the construction in Example \ref{ex:class_graph_construction} to characterize classes of subgraphs of a graph $G$ which can support cycles in $EMC_{k,k}(G).$ To do so, we will first develop a decomposition of these cycles via a manageable spanning set.

\begin{definition}
\label{def:class_graph_construction}
    Let $G$ be a graph and fix integers $k, m, n \geq 1$. 
    Consider a collection of eulerian $k$-trails $X= \{\bar{x}^i\}_{i \in [m]}\subseteq ET_{k,k}(G).$ We say such a collection $X$ is \emph{local} if all of the $\bar{x}^i$ have the same starting and ending points, and for each $i\in[m]$ there are $j \in [m]\setminus \{i\}$ and $r \in [k-1]$ so that $x_r^i \neq x_r^j$ and $x_a^i = x_a^j$ for all $a \in [k-1]\setminus\{r\}.$
\end{definition}

Trails in local collections are those which can potentially share terms in their differentials, so we can hope to construct linear combinations of these trails which are cycles. However, such linear combinations may decompose into cycles supported on smaller local collections of trails.  We will need to decompose cycles as much as possible to obtain our desired spanning set.

\begin{definition}\label{def:minsupp}
    Let $G$ be a graph and suppose $X = \{\bar{x}^i\}_{i \in [m]}\subseteq ET_{k,k}(G).$ If there are coefficients $a_i \in \mathbb{Z}\setminus\{0\}, i \in [m]$ so that $d_k(\sum_{i \in I}a_i\bar{x}^i) = 0,$ we say that $\gamma = \sum_{i \in I}a_i\bar{x}^i$ is a cycle \emph{supported on X}. If, in addition, there is no $J \subsetneq [m]$ so that $\partial_{k,k}(\sum_{j \in J}a_j\bar{x}^j) = 0,$ we say that $\gamma$ is an \emph{$X$-minimal cycle for $G$}, and that $\gamma$ is \emph{minimally supported on $X$.}
\end{definition}

Now, we will take the (perhaps discomfiting) step of introducing an auxiliary graph that encodes structure in cycles in $EMC_{k,k}(G)$  supported on local collections of trails, and studying its structure.

\begin{definition}\label{def:structuregraph}
    Let $G$ be a graph, fix integers $k, m, n \geq 1$, and let
    $X= \{\bar{x}^i\}_{i \in [m]}$ be a local collection of eulerian $k$-trails in $G.$ Define the \emph{structure graph} for $X$, $s(X),$ to be the graph with vertices $X$ and an edge $\{\bar{x}^i, \bar{x}^j\}$ if there exists $r \in [k-1]$ so that $x_r^i \neq x_r^j$ and $x_a^i = x_a^j$ for all $a \in [k-1]\setminus\{r\}.$
\end{definition}

Suppose $X = \{\bar{x}^i\}_{i\in[m]}$ is a local collection of trails in $ET_{k,k}(G)$ for some graph $G$. The cliques and circuits in the structure graph $s(X)$ encode relations on coefficients of linear combinations of the trails in $X$  for which the differential $\partial_{k,k}$ can vanish. 
Observe first that since every edge in the structure graph indicates that the corresponding trails differ by precisely one landmark, a maximal clique in $s(X)$ corresponds to a maximal family $\{\bar{x}^j\}_{j\in J}\subseteq X, J \subseteq [m]$ which are pairwise identical except in their $r$th landmark for some fixed $r$. Thus, since we assumed vanishing differential, there is a single nonzero term in $\partial_{k,k}^r (\sum_{i\in[m]}a_i\bar{x}^i),$ corresponding to this family, and that term vanishes precisely when $\sum_{j\in J}a_j = 0.$ In the particular case when the maximal clique has two vertices -- that is, it is an edge in $s(X)$ which is not included in a larger clique -- this equation becomes $a_1 + a_2 = 0,$ so the  coefficients assigned to the two trails must differ by a sign. 

Knowing that the cliques in the structure graph determine the relations between coefficients in cycles supported on a local collection $X$ allows us to describe the class of graphs that support $X$-minimal cycles. To do so, for each maximal clique in $s(X)$, we assemble required and excluded edges as we did in Example \ref{ex:two_trail_cycle_graph} including a set $E_{supp}$ of the unions of the supports of the trails, a set $E_\text{diff}$ of the unions of the sets of edges which make each trail's differential cancel; and, a set $E_\text{rem}$ which indicates which edges must be missing from the graph so that trails have non-zero differentials indicated by the structure of $s(X)$. If a collection $X$ would cause a conflict between required and excluded edges, we cannot construct a graph which would minimally support the corresponding cycle, so we must exclude such collections. We collect all of these observations in Definition \ref{def:fullclassgraph}; Example \ref{ex:edge_sets} below describes the constituent edge sets for a particular graph and local collection $X$ of trails.

\begin{definition}
    \label{def:fullclassgraph}
    Fix integers $k, m \geq 1$. 
    Consider a local collection $X = \{\bar{x}^i\}_{i \in [m]}\subseteq ET_{k,k}(\Delta_n).$ Write $\bar{x}^i = (x_0^i, \dots, x_k^i).$ Let $\{Q_t = \{\bar{x}^j\}_{j\in J_t} \subseteq X\}_{t\in [r]}$ be the collection of supports of maximal cliques in $s(X),$ so $s(X)\vert_{Q_t}$ is a maximal clique of $s(X)$ for all $t \in [r].$ For each $t,$ write $b(t)$ for the landmark at which the constituent trails of $Q_t$ differ; that is, for all $i \neq j \in J_t, x_{b(t)}^i \neq x_{b(t)}^j$ and $x_s^i = x_s^j$ for all $s \in [k]_0 \setminus \{b(t)\}.$
    
    Now, write
    \begin{align*}
    E_{\text{supp}} &= \{\{x_a^i, x_{a+1}^i\}\;\colon\;a\in [k-1]_0, i \in [m]\}\\
    E_\text{diff} &=  \{\{x^{i}_a, x^{i}_{a+2}\}\;\colon\; a \in [k-2]_0, i \in [m]\}\\
    E_\text{rem} &= \{\{x^{j(t)}_{b(t)-1}, x^{j(t)}_{b(t)+1}\}\;\colon\;\text{for any choice of }j(t) \in J_t, \text{ for all }t \in [r]\}\\
    \end{align*}
    If $E_\text{supp} \cap E_\text{rem} = \varnothing,$ we say $X$ is \emph{compatible}, and define the \emph{minimal class graph supporting $X$,} written $\mathcal{H}(X) = (W^X, E_S^X, E_B^X),$ with the following elements:    
    \begin{align*}
        W^X &= \bigcup_{i \in [m]}\{x_0^i, \dots, x_k^i\}\\
        E_S^X &= E_\text{supp} \cup E_\text{diff} \setminus (E_\text{diff} \cap E_\text{rem})\\
        E_B^X &= {W \choose 2} \setminus E_\text{rem}
    \end{align*}
    
\end{definition}

\begin{example}\label{ex:edge_sets}
    For the graph in Figure~\ref{fig:example four sets of edges}, consider the local collection $X = \{\bar{x}^i\}_{i=1}^4$, with  $\bar{x}^1=(0,1,3,5,6)$, $\bar{x}^2=(0,1,4,5,6)$, $\bar{x}^3=(0,2,4,5,6)$ and $\bar{x}^4=(0,2,3,5,6).$ Applying Definition~\ref{def:fullclassgraph} to this collection, we have $k = m = 4$, while the structure graph $s(X)$ is isomorphic to $C_4,$ with $r=4$ maximal cliques. From this structure, we collect:
    
    \begin{align*}
        E_{\text{supp}} &=  \bigcup_{i\in [4]} \{ \{x_a^i, x_{a+1}^i\} : a \in [3]_0\} \\
        &= \{ \{0,1\}, \{1,3\}, \{3,5\}, \{5,6\}\} \cup \{ \{0,1\}, \{1,4\}, \{4,5\}, \{5,6\} \}  \\
        &\qquad\cup \{\{0,2\}, \{2,4\}, \{4,5\}, \{5,6\} \} \cup \{ \{0,2\}, \{2,3\}, \{3,5\}, \{5,6\} \} \\
        &= \{ \{0,1\}, \{0,2\}, \{1,3\}, \{1,4\}, \{2,3\}, \{2,4\}, \{3,5\}, \{4,5\}, \{5,6\} \}\\
        E_\text{diff} &= \bigcup_{i \in [4]} \{\{x^{i}_a, x^{i}_{a+2}\}\;\colon\; a \in [2]_0\}\\
        &= \{ \{0,3\}, \{1,5\}, \{3,6\} \} \cup \{ \{0,4\}, \{1,5\}, \{4,6\}\}  \\
        &\qquad\cup \{ \{0,4\}, \{2,5\}, \{4,6\}\} \cup \{ \{0,3\}, \{2,5\}, \{3,6\}\} \\
        &= \{ \{0,3\}, \{0,4\}, \{1,5\}, \{2,5\}, \{3,6\}, \{4,6\}  \} 
    \end{align*}
    \begin{align*}
        E_\text{rem} &= \bigcup_{i \in [4]} \{\{x^{j(t)}_{b(t)-1}, x^{j(t)}_{b(t)+1}\}\;\colon\;\text{for any choice of }j(t) \in J_t, \text{ for all }t \in [r]\}\\
        &= \{ \{0,3\}, \{1,5\}\} \cup \{\{0,4\}, \{1,5\}\} \cup \{\{0,4\}, \{ 2,5\}\} \cup \{\{0,3\}, \{2,5\}\} \\
        &= \{ \{0,3\}, \{0,4\}, \{1,5\}, \{2,5\}\} \\
    \end{align*}
    
    \begin{figure}[h]
			\centering
			\begin{tikzpicture}[node distance={20mm}, thick, main/.style = {draw, circle,minimum size=.85cm}]
	 	\node[main] (0) {$0$}; 
	 	\node[main] (1) [above right of=0] {$1$};  
		  \node[main] (2) [below right of=1] {$4$};
		  \node[main] (3) [above right of=2] {$5$};
		  \node[main] (1') [below right of=0] {$2$};
            \node[main] (2') [above right of=1] {$3$};
            \node[main] (4) [below right of=3] {$6$};
            
		  \draw (0) -- (1);
		  \draw (1) -- (2);
		  \draw (2) -- (3);
            \draw (0) -- (1');
		  \draw (1') -- (2);
		  \draw (1) -- (2');
		  \draw (2') -- (3);
            \draw (1') -- (2');
            \draw (3) -- (4);            

            \path[every node/.style={font=\sffamily\small},lightgray]
			(0) edge [bend left=33] node {} (2');
            \path[every node/.style={font=\sffamily\small},lightgray]
			(0) edge [bend right=20] node {} (2);
            \path[every node/.style={font=\sffamily\small},lightgray]
			(1) edge [bend right=20] node {} (3);
            \path[every node/.style={font=\sffamily\small},lightgray]
			(1') edge [bend right=33] node {} (3);

            \path[every node/.style={font=\sffamily\small},dashed, red]
			(0) edge [bend left=50] node {} (2');
            \draw[dashed, red] (0) -- (2);
            \draw[dashed, red] (1) -- (3);
            \path[every node/.style={font=\sffamily\small},dashed, red]
			(1') edge [bend right=50] node {} (3);

            \path[every node/.style={font=\sffamily\small},lightgray]
			(2') edge [bend left=33] node {} (4);
            \draw[lightgray] (2) -- (4);           
			
		  \end{tikzpicture} 

	\caption{Let $\bar{x}^1=(0,1,3,5,6)$, $\bar{x}^2=(0,1,4,5,6)$, $\bar{x}^3=(0,2,4,5,6)$ and $\bar{x}^4=(0,2,3,5,6)\}$. $E_{\text{supp}}$ is represented in black, 
 $E_{\text{diff}}$ in gray and $E_{\text{rem}}$ is dashed-red.}
	\label{fig:example four sets of edges}
\end{figure}
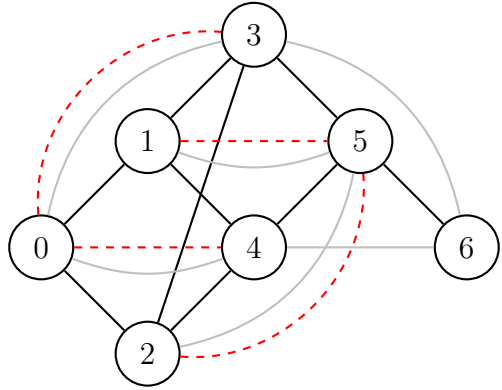

We see from the computations above that $E_\text{supp} \cap E_\text{rem} = \emptyset$.
Thus $X$ is a compatible collection and we can define the minimal class graph supporting $X$, $\mathcal{H}(X)$:

\begin{align*}
    W^X &= \bigcup_{i \in [4]} \{x_0^i,x_1^i,x_2^i,x_3^i,x_3^i\} \\
    &= \{0,1,2,3,4,5,6\} \\
    E_S^X &= E_\text{supp} \cup E_\text{diff} \setminus (E_\text{diff} \cap E_\text{rem})\\
    &= \{ \{0,1\}, \{0,2\}, \{1,3\}, \{1,4\}, \{2,3\}, \{2,4\}, \{3,5\}, \{3,6\}, \{4,5\}, \{4,6\}, \{5,6\} \} \\
    E_B^X &= {W \choose 2} \setminus E_\text{rem} \\
    &= {W \choose 2} \setminus \{ \{0,3\}, \{0,4\}, \{1,5\}, \{2,5\}\}.
\end{align*}
\end{example}

Observe that the class graph $\mathcal{H}(X)$ is ``minimal" in the sense that for a cycle minimally supported on $X$, every edge in $\alpha(\mathcal{H}(X))$ is either in $E_\text{supp}$, and so needed to support one of the trails, or in $E_\text{diff}$ but not $E_\text{rem}$, and so ensures that a boundary term that is not canceled by other trails is zero. In order to demonstrate that the family $X$ describes at least one element in $EMC_{k,k}(\alpha(\mathcal{H}(X))),$ we will first observe that the structure graph provides us with tools for decomposing such cycles into simpler elements. 

\begin{definition}\label{def:clique-tree}
    Let $G$ be a graph. A simple circuit \emph{of minimal-length} is a full subgraph of $G$ isomorphic to some cycle graph $C_k, k \geq 3$. A connected structure graph $G$ is a \emph{clique-tree} if every minimal-length simple circuit in $G$ has three vertices\footnote{Clique trees are chordal graphs. However, as structure graphs they have a restricted, tree-like form we describe below, which motivates our choice of terminology.}. A \emph{clique-forest} is a disjoint union of clique-trees.
\end{definition}

Observe that two maximal cliques in a structure graph cannot intersect in more than one vertex.
Indeed, we noticed that vertices $\{\bar{x}^j\}_{j \in J}$ of a maximal clique are all pairwise identical except in their $r$th landmark, for some fixed $r$.
Now, suppose by contradiction that two distinct maximal cliques, $Q_1$ and $Q_2$, share two vertices $\bar{x}^a$ and $\bar{x}^b$.
Notice that, because the two maximal cliques are distinct, the landmark in which vertices of $Q_1$ differ is different then the one in which vertices of $Q_2$ differ.
So, say $\{\bar{x}^j\}_{j \in J} \in Q_1$ differ in their $r$th landmark and $\{\bar{x}^j\}_{j \in J} \in Q_2$ differ in their $s$th landmark, $r \neq s$.
Now, because $\bar{x}^a, \bar{x}^b \in Q_1$ they both differ only in their $r$th landmark, and similarly because $\bar{x}^a, \bar{x}^b \in Q_2$ they both differ only in their $s$th landmark.
But then they differ in two landmarks, so by definition of structure graph they are not connected by edge, and thus they cannot be contained in the same maximal clique, which gives us a contradiction.

Therefore, clique-trees are connected graphs for which every simple circuit is contained in a maximal clique, and so successively collapsing the maximal cliques of three or more vertices into vertices will result in a tree. 

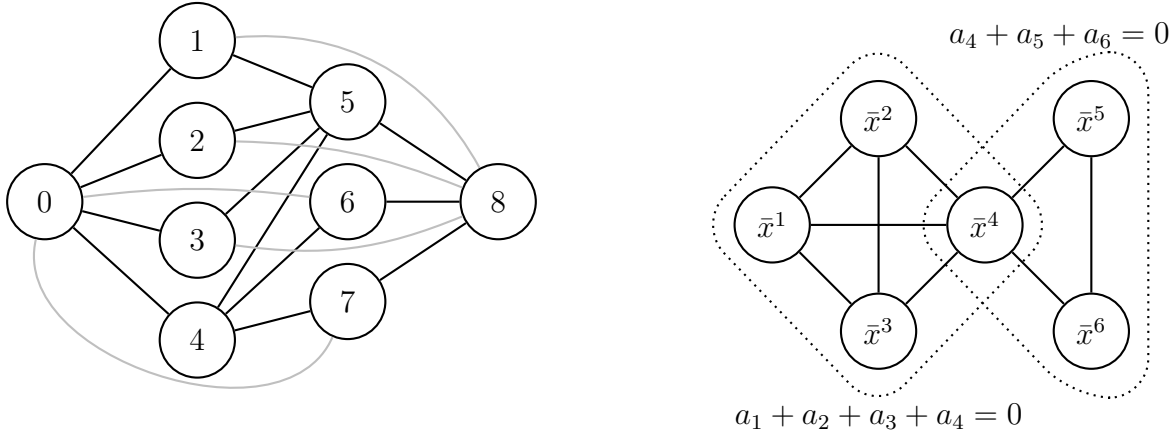
\begin{figure}[h]
	\begin{minipage}{\linewidth}
		\begin{minipage}{0.45\linewidth}
			\centering
			\begin{tikzpicture}[node distance={20mm}, thick, main/.style = {draw, circle,minimum size=1cm}]
	 	\node[main] (1) {$1$};  
		  \node[main] (2) [below=0.3cm of 1] {$2$};
  		\node[main] (3) [below=0.3cm of 2] {$3$};
		  \node[main] (4) [below=0.3cm of 3] {$4$};
            \node (level) [below=0.15cm of 1]	{};
            \node[main] (5) [right of=level] {$5$};
            \node[main] (6) [below=0.3cm of 5] {$6$};
            \node[main] (0) [left=3cm of 6] {$0$}; 
            \node[main] (7) [below=0.3cm of 6] {$7$};
            \node[main] (8) [right of=6] {$8$};

			\draw (0) -- (1);
			\draw (1) -- (5);
            \draw (0) -- (2);
			\draw (2) -- (5);
			\draw (0) -- (3);
			\draw (3) -- (5);
			\draw (0) -- (4);
			\draw (4) -- (5);
			\draw (4) -- (6);
			\draw (4) -- (7);
			\draw (8) -- (5);
			\draw (8) -- (6);
			\draw (8) -- (7);

                \path[every node/.style={font=\sffamily\small},lightgray]
			(1) edge [bend left=33] node {} (8);
                \path[every node/.style={font=\sffamily\small},lightgray]
			(2) edge [bend left=10] node {} (8);
                \path[every node/.style={font=\sffamily\small},lightgray]
			(3) edge [bend right=15] node {} (8);
                \path[every node/.style={font=\sffamily\small},lightgray]
			(0) edge [bend left=7] node {} (6);
                \path[every node/.style={font=\sffamily\small},lightgray]
			(0) edge [bend right=85] node {} (7);
            	
		\end{tikzpicture} 
	\end{minipage}
	\hspace{0.07\linewidth}
	\begin{minipage}{0.45\linewidth}
		\centering
		\begin{tikzpicture}[node distance={20mm}, thick, main/.style = {draw, circle,minimum size=1cm}]
			\node[main] (0) {$\bar{x}^1$}; 
			\node[main] (1) [above right of=0] {$\bar{x}^2$};  
			\node[main] (2) [below right of=1] {$\bar{x}^4$};
			\node[main] (3) [below right of=0] {$\bar{x}^3$};
			\node[main] (4) [below right of=2] {$\bar{x}^6$};
			\node[main] (5) [above right of=2] {$\bar{x}^5$};

            \node (sum1) [below=0.3cm of 3] {$a_1 + a_2 + a_3 + a_4 = 0$};
            \node (sum2) [above right=0.6 of 1] {$a_4 + a_5 + a_6 = 0$};

            \draw[rounded corners=15pt, dotted]
                (-1,0)  --
                ++(2.4,2.5)  -- 
                ++(2.4,-2.5)  -- 
                ++(-2.4,-2.5)  -- 
                cycle;

            \draw[rounded corners=15pt, dotted]
                (1.8,0)  --
                ++(2.2,2.3)  -- 
                ++(1,0)  -- 
                ++(0,-4.6)  -- 
                ++(-1,0)  -- 
                cycle;

			\draw (0) -- (1);
			\draw (1) -- (2);
			\draw (2) -- (3);
			\draw (0) -- (3);
            \draw (0) -- (2);
            \draw (1) -- (3);
            \draw (2) -- (4);
            \draw (2) -- (5);
            \draw (4) -- (5);
				
			\end{tikzpicture} 
			
		\end{minipage}
	\end{minipage}
	\caption{Write $\bar{x}^1=(0,1,5,8)$, $\bar{x}^2=(0,2,5,8)$, $\bar{x}^3=(0,3,5,8)$, $\bar{x}^4=(0,4,5,8)$, $\bar{x}^5=(0,4,6,8)$, $\bar{x}^6=(0,4,7,8)$. and  $X = \{\bar{x}^i\}_{i\in[6]}$. (left) In this graph $G \in \mathcal{H}(X)$, the cycle $\gamma = \bar{x}^1 - \bar{x}^2 + \bar{x}^3 - \bar{x}^4 - \bar{x}^5 + 2\bar{x}^6$ is minimally supported on $X.$ The set $E_\text{supp}$ is represented in black, while the edges in $E_\text{diff} \setminus (E_\text{diff} \cap E_\text{rem})$ are gray. (right) The structure graph $s(X)$ is a clique-tree, and its maximal cliques describe relations on the coefficients of supported cycles.}
	\label{fig:clique tree}
\end{figure}

It is straightforward to construct eulerian magnitude cycles minimally supported on clique-trees. Suppose that for some local, compatible collection $X = \{\bar{x}^i\}_{i\in[m]} \subseteq ET_{k,k}(\Delta_n),$ $G \in \mathcal{H}(X)$ and $s(X)$ is a clique-tree. Select any vertex $\bar{x}^i\in X$ and assign to it a non-zero coefficient $a_i$. For each maximal clique $\{\bar{x}^j\}_{j\in J},$ $J\subseteq[m]$ in $s(X)$ such that $i \in J,$ assign non-zero coefficients $a_j, j\in J \setminus \{i\}$ so that $\sum_{j \in J}a_j = 0.$ Repeat inductively for every maximal clique containing a vertex assigned a coefficient by this process. Since $s(X)$ is connected this process will assign a coefficient to every element of $X.$ By construction, the resulting linear combination will be a cycle in $EMC_{k,k}(G).$ In addition, the cycle is $X$-minimal because we have constructed $G$ to force other terms in the boundary to vanish, so terms in the differential corresponding to cliques in $s(X)$ cancel only due to the relations on the coefficients. See  Figure \ref{fig:clique tree} for an example. 

\medskip
We now come to one of the core results of this paper: given a general local, compatible collection $X$ of $k$-trails in $G$, we can decompose cycles minimally supported on $X$ into cycles minimally supported on clique-trees and cycles minimally supported on minimal circuits of $s(X).$ This decomposition bears more than a passing resemblance to finding a minimal spanning tree for $s(X),$ though here we will remove vertices in cycles rather than edges.

\begin{theorem}\label{thm:cycle_decomp}
    Let $G$ be a graph and $X = \{\bar{x}^i\}_{i\in[m]} \subseteq ET_{k,k}(G)$ a local, compatible collection of trails in $G.$ Suppose $\gamma\in EMC_{k,k}(G)$ is a cycle minimally supported on $X.$ Then there are a (possibly empty) family of local, compatible collections $Y_1, \dots Y_r \subseteq X$ and disjoint subsets $Z_a \subseteq Y_a, a\in[r]$ so that
    \begin{enumerate}
        \item $s(X)\vert_{Y_a}$ is a minimal-length simple circuit for each $a \in [r]$ with $|Y_a| \geq 4$ and even,
        \item writing $Y_T = X \setminus \left(\bigsqcup_{a\in[r]} Z_a\right)$, $s(X)\vert_{Y_T}$ is a clique-forest composed of disjoint local, compatible clique-trees $s(X)\vert_{Y_T^1}, \dots s(X)\vert_{Y_T^p}$, and
        \item there are cycles $\gamma_a \in EMC_{k,k}(G), a\in[r]$ minimally supported on $Y_a$ and $\gamma_T^b\in EMC_{k,k}(G), b\in[p]$ minimally supported on $Y_T^b,$ so that $\gamma = \sum_{a \in [r]}\gamma_a + \sum_{b \in [p]}\gamma_T^b.$
    \end{enumerate}
\end{theorem}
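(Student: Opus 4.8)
First I would set up the dictionary between the equation $\partial_{k,k}\gamma = 0$ and the combinatorics of the structure graph. Writing $\gamma = \sum_{i\in[m]}a_i\bar{x}^i$ with every $a_i\neq 0$, two trails contribute to the same term of $\partial_{k,k}\gamma$ exactly when they agree in all landmarks but one, i.e.\ when they are equal or adjacent in $s(X)$; hence the families of trails sharing a boundary term are precisely the cliques of $s(X)$, which — as observed before the theorem — are monochromatic and so pairwise meet in at most one vertex. Using Lemma~\ref{lem:diff_zero_so_edge}, $\partial_{k,k}\gamma = 0$ then unpacks into a linear system with one equation $\sum_{j\in J_t}a_j = 0$ per maximal clique $Q_t=\{\bar{x}^j\}_{j\in J_t}$ of $s(X)$ carrying a nonzero boundary term. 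I would record two consequences for later: (a) if $\partial^{j}_{k,k}\bar{x}^i\neq 0$ then $\bar{x}^i$ must have an incident edge of ``type $j$'' in $s(X)$, since otherwise its solitary boundary term forces $a_i=0$; and (b) $s(X)$ is connected, because a proper component $A\subsetneq X$ would make $\sum_{j\in A}a_j\bar{x}^j$ a cycle — no boundary term is shared across components — contradicting minimality of $\gamma$.

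The plan is then to induct on $|X|$, peeling off one cycle supported on an even induced circuit at each step. If $s(X)$ has no induced cycle of length $\geq 4$, then by Definition~\ref{def:clique-tree} the connected graph $s(X)$ is a clique-tree and we are done with $r=0$, $p=1$, $Y_T=Y_T^1=X$, $\gamma_T^1=\gamma$. Otherwise I would fix an induced circuit $C$ of $s(X)$ on $\bar{y}^0,\dots,\bar{y}^{\ell-1}$ with $\ell\geq 4$. Since $C$ is induced, $s(X)|_C$ is triangle-free, so its maximal cliques are exactly its edges; as any cycle $\gamma_C$ supported on $C$ has vanishing coefficients off $C$, the clique relations of the previous paragraph restrict to $a_c+a_{c+1}=0$ around $C$ (plus the vanishing of certain ``singleton'' boundary terms), and this admits an all-nonzero solution only for $\ell$ even — this is where the evenness of $|Y_a|$ comes from — forcing $\gamma_C$, up to a scalar, to the alternating form $\gamma_C^{(\lambda)}=\lambda\big((\bar{y}^0)-(\bar{y}^1)+\cdots-(\bar{y}^{\ell-1})\big)$. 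The clique-tree construction recalled before the theorem applies verbatim to the circuit $C$ to produce such a $\gamma_C^{(\lambda)}\in EMC_{k,k}(G)$, minimally supported on $C$.

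To peel, I would use consequence (a) together with the clique relations satisfied by $\gamma$ itself to locate a nonempty sub-arc of $C$ along which $(-1)^c a_{\bar{y}^c}$ is constant, take $\lambda$ equal to that value, and set $\gamma_1:=\gamma_C^{(\lambda)}$, $Y_1:=C$, and $Z_1\subseteq Y_1$ to be the (nonempty) set of vertices of $C$ on which $\gamma$ and $\gamma_1$ have equal coefficient. Then $\gamma':=\gamma-\gamma_1$ is a cycle with support inside $X\setminus Z_1$, hence of strictly smaller size; after verifying that $\gamma'$ is minimally supported on its support $X''$, I would apply the inductive hypothesis to $(G,X'',\gamma')$ and prepend the triple $(\gamma_1,Y_1,Z_1)$. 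Disjointness of the $Z_a$ is then automatic, since $Z_1\subseteq X\setminus X''$ while the later $Z_a\subseteq X''$, and $Y_T=X\setminus\bigsqcup_a Z_a=X''\setminus\bigsqcup_{a\geq 2}Z_a$ is precisely the clique-forest returned by the recursion; its decomposition into connected clique-trees $Y_T^1,\dots,Y_T^p$ splits the remaining cycle into pieces $\gamma_T^b$ on $Y_T^b$, each a cycle (distinct components share no boundary terms) and minimally supported because $\gamma$ was.

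The hard part will be the peeling step itself: guaranteeing at every stage both that there is an induced even circuit $C$ and a scalar $\lambda$ with $Z_1\neq\varnothing$, and that subtracting $\gamma_1$ leaves a cycle that is again \emph{minimally} supported. The subtlety is that an edge of the induced circuit $C$ can lie in a larger maximal clique of $s(X)$, so the coefficients of $\gamma$ along $C$ need not be globally alternating; keeping this under control requires exploiting minimality of $\gamma$ through consequence (a) — no trail may be orphaned by an uncancelled boundary term — and, crucially, the hypotheses that $X$ is local and compatible, which are exactly what rule out the offending configurations (such as an induced circuit meeting a clique of an unrelated type in a single vertex). Making this bookkeeping airtight, in particular preservation of minimality under peeling and the choice of circuits whose removed sets $Z_a$ are pairwise disjoint, is the core of the argument; everything else is the routine unwinding described above.
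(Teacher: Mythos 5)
Your proposal follows essentially the same strategy as the paper's proof: induct by peeling off an alternating cycle supported on a minimal-length (induced) circuit of $s(X)$ at each step, with clique-trees as the base case, and the same recipe $\gamma_1 = \sum_{c}(-1)^{d(\bar{y}^c,\bar{y}^0)}\lambda\,\bar{y}^c$ together with $Z_1$ defined as the vertices where coefficients agree. Two small differences are worth noting. First, your argument for evenness of $|Y_a|$ is a linear-algebraic shortcut — the alternating relations $a_c + a_{c+1}=0$ force $a_0=(-1)^\ell a_0$ — whereas the paper argues inductively that trails enter $Y$ in pairs; yours is tidier, though both rest on the same tacit assumption that each edge of the induced circuit carries a two-term (rather than larger) clique relation. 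Second, you flag explicitly, and leave open, the two genuine weak spots: (i) an edge of $C$ can lie in a larger maximal clique of $s(X)$, so the constraint is $\sum_{j\in J}a_j=0$ over the full clique and the coefficients along $C$ need not alternate; and (ii) the minimality of $\gamma-\gamma_1$ on its support is asserted, not proven. The paper's own proof glosses over both of these as well, so you have correctly located where the argument is thin. As a reconstruction of the route the paper takes, the proposal is accurate; but because you explicitly defer the peeling-step bookkeeping (sub-arc existence is easy — a single vertex $\bar{x}^{j_0}$ suffices, and taking $\lambda=a_{j_0}$ gives $Z_1\neq\varnothing$ for free — but preservation of minimality and compatibility of the remainder are not), it remains an outline rather than a closed argument.
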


\begin{proof}

Fix a graph $G$ and $X = \{\bar{x}^i\}_{i\in[m]} \subseteq ET_{k,k}(G)$ a local, compatible collection of $k$-trails in $G,$ and let $\gamma = \sum_{i\in[m]}a_i\bar{x},$ $a_i \neq 0,$ be a cycle in $EMC_{k,k}(G)$ which is minimally supported on $X.$

Suppose $s(X)$ contains a minimal-length simple circuit involving four or more vertices, so we have $Y = \{\bar{x}^j\}_{j\in J}$ for $J \subseteq [m], |J| \geq 4$ such that $s(X)\vert_{Y} \cong C_{|J|}.$ Thus, each vertex $\bar{v}$ of $s(X)\vert_{Y}$ is implicated in two edges $(\bar{u},\bar{v})$ and $(\bar{v},\bar{w})$, and these edges cannot lie in the same maximal clique in $s(X)$ or the edge $(\bar{u},\bar{w})$ would also be present in the subgraph $s(X)\vert_{Y}$. 

Fixing $j_0 \in J$, the fact that the edge $(\bar{u},\bar{w})$ is not present in the subgraph $s(X)\vert_{Y}$ says that there are precisely two terms in the differential $\partial_{k,k}(\bar{x}^{j_0})$ which do not vanish, say $\partial_{k,k}^t(\bar{x}^{j_0})$ and $\partial_{k,k}^{t'}(\bar{x}^{j_0}),$ $t < t'.$ Thus, because $\gamma$ is a cycle and therefore $\partial(\gamma)=0$, from Example 3.2 we see that there are $\bar{x}^{j_1}$ and $\bar{x}^{j_2}$ in $Y$ so that $\bar{x}^{j_0}$ and $\bar{x}^{j_1}$ differ precisely in their $t$th landmark, and similarly $\bar{x}^{j_0}$ and $\bar{x}^{j_2}$ differ in their $t'$th landmark. Thus, $G$ contains the walks $\bar{x}^{j_0} = (x^{j_0}_0, \dots x^{j_0}_t, \dots , x^{j_0}_{t'}, \dots , x^{j_0}_k)$, $\bar{x}^{j_1} =(x^{j_0}_0, \dots x^{j_1}_t, \dots , x^{j_0}_{t'}, \dots , x^{j_0}_k)$,  and $\bar{x}^{j_2} =(x^{j_0}_0, \dots x^{j_0}_t, \dots , x^{j_2}_{t'}, \dots , x^{j_0}_k),$ and necessarily does not contain the edges $\{x^{j_0}_{t-1}, x^{j_0}_{t+1}\}$ and $\{x^{j_0}_{t'-1}, x^{j_0}_{t'+1}\}.$  
As before, because $s(X)\vert_{Y}$ is a cycle graph, each of the chains $\bar{x}^{j_1}$ and $\bar{x}^{j_2}$ has exactly two non-vanishing terms in its differential. And, because they share all landmarks except $x^{j_1}_t$ or $x^{j_2}_{t'}$ with $\bar{x}^{j_0},$ the missing edges in $G$ force those to be the $t$th and $t'$th terms of the differential, per Lemma \ref{lem:diff_zero_so_edge}. There are two possibilities: 

\begin{enumerate}
    \item The trail $\bar{x}^{j_3} =(x^{j_0}_0, \dots x^{j_1}_t, \dots , x^{j_2}_{t'}, \dots , x^{j_0}_k)$ is in $Y.$ In this case, by minimality of the circuit, $s(X)\vert_Y \cong C_4.$ An example is illustrated in Figure \ref{fig:square minimally supported on Y}.
    \item The set $Y$ contains two distinct trails $\bar{x}^{j_1'}=(x^{j_0}_0, \dots x^{j_1'}_t, \dots , x^{j_2}_{t'}, \dots , x^{j_0}_k)$ and $\bar{x}^{j_2'}=(x^{j_0}_0, \dots x^{j_1}_t, \dots , x^{j_2'}_{t'}, \dots , x^{j_0}_k)$ in order to cancel these terms. In this case, we have an equivalent pair of options again for canceling the corresponding differential terms, and this process proceeds inductively. As $|J|$ is finite, eventually we must select the option where a single trail shares both of the unresolved differential terms. As each step added an even number of trails to $Y$, we conclude that $|J|$ is even. Such a circuit is illustrated in Figure \ref{fig:even cycle minimally supported on Y}.
\end{enumerate}

\begin{figure}[h]
	\begin{minipage}{\linewidth}
		\begin{minipage}{0.45\linewidth}
			\centering
			\begin{tikzpicture}[node distance={15mm}, thick, main/.style = {draw, circle,minimum size=.75cm}]
	 	\node[main] (0) {$0$}; 
	 	\node[main] (1) [above right of=0] {$1$};  
			\node[main] (2) [below right of=1] {$4$};
			\node[main] (3) [above right of=2] {$5$};
			\node[main] (1') [below right of=0] {$2$};
            \node[main] (2') [above right of=1] {$3$};
			\draw (0) -- (1);
			\draw (1) -- (2);
			\draw (2) -- (3);
            \draw (0) -- (1');
			\draw (1') -- (2);
			\draw (1) -- (2');
			\draw (2') -- (3);
            \draw (1') -- (2');
			
		  \end{tikzpicture} 
			
		\end{minipage}
		\hfill
		\begin{minipage}{0.45\linewidth}
			\centering
			\begin{tikzpicture}[node distance={20mm}, thick, main/.style = {draw, circle,minimum size=0.75cm}]
				\node[main] (0) {$\bar{x}^1$}; 
				\node[main] (1) [above of=0] {$ \bar{x}^2$};  
				\node[main] (2) [right of=1] {$ \bar{x}^3$};
				\node[main] (3) [right of=0] {$ \bar{x}^4$};
				
				\draw (0) -- (1);
				\draw (1) -- (2);
				\draw (2) -- (3);
				\draw (0) -- (3);

        \draw[rounded corners=15pt, dotted]
            (-0.7,2.7)  --
            ++(1.4,0)  -- 
            ++(0,-3.4)  -- 
            ++(-1.4,0)  -- 
            cycle;
            
        \node (level)[above=0.45 cm of 0] {};
        \node (sum1)[left=0.8cm of level]{$a_2 = -a_1$};
            \end{tikzpicture} 

		\end{minipage}
	\end{minipage}
	\caption{Let $\bar{x}^1=(0,1,3,5)$, $\bar{x}^2=(0,1,4,5)$, $\bar{x}^3=(0,2,4,5)$ and $\bar{x}^4=(0,2,3,5)\}$, and let $Y = \{\bar{x}^1, \bar{x}^2, \bar{x}^3, \bar{x}^4\}.$ (left) A graph $G$ for which the cycle $\gamma = \bar{x}^1 - \bar{x}^2- \bar{x}^3+ \bar{x}^4$ is minimally supported on $Y$. (right) The structure graph $s(Y)$ is isomorphic to $C_4$. Its maximal cliques, the edges, induce alternating signs on the coefficients of the cycle $\gamma.$}
	\label{fig:square minimally supported on Y}
\end{figure}
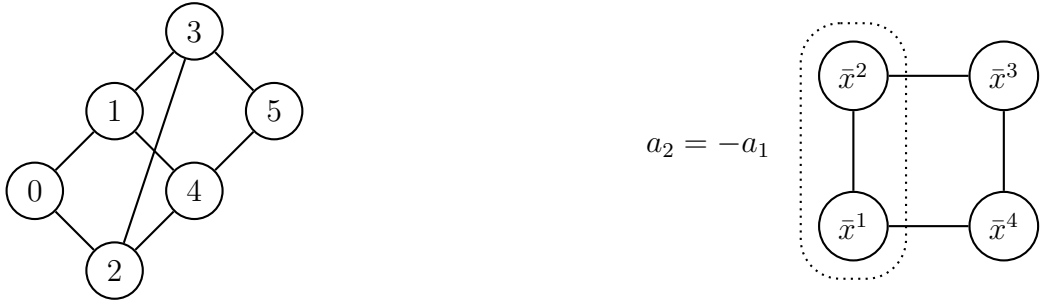

\begin{figure}[h]
	\begin{minipage}{\linewidth}
		\begin{minipage}{0.4\linewidth}
			\centering
			\begin{tikzpicture}[node distance={17mm}, thick, main/.style = {draw, circle,minimum size=.75cm}]
	 	\node[main] (0) {$0$}; 
	 	\node[main] (1) [above right of=0] {$1$};  
			\node[main] (2) [below right of=1] {$4$};
			\node[main] (3) [above right of=2] {$5$};
			\node[main] (4) [below right of=3] {$8$};
			\node[main] (1') [right=0.45cm of 0] {$2$};
            \node[main] (3') [right=0.45cm of 2] {$6$};
            \node[main] (1'') [below right of=0] {$3$};
            \node[main] (3'') [below right of=2] {$7$};
			\draw (0) -- (1);
			\draw (1) -- (2);
			\draw (2) -- (3);
			\draw (3) -- (4);
			
			\draw (0) -- (1');
			\draw (1') -- (2);
			\draw (2) -- (3');
            \draw (3') -- (4);
			\draw (0) -- (1'');
            \draw (0) -- (1'');
            \draw (1'') -- (2);
            \draw (2) -- (3'');
			\draw (3'') -- (4);

            \draw[lightgray] (1) -- (3);
            \draw[lightgray] (1') -- (3);
            \draw[lightgray] (1) -- (3');
            \draw[lightgray] (1') -- (3'');
            \draw[lightgray] (1'') -- (3');
            \draw[lightgray] (1'') -- (3'');
			
		\end{tikzpicture} 
		\end{minipage}
		\hfill
		\begin{minipage}{0.5\linewidth}
			\centering
			\begin{tikzpicture}[node distance={20mm}, thick, main/.style = {draw, circle,minimum size=.75cm}]
				\node[main] (1) {$ \bar{x}^1$}; 
				\node[main] (2) [above right of=1] {$ \bar{x}^2$};  
				\node[main] (3) [right of=2] {$\bar{x}^4$};
				\node[main] (4) [below right of=3] {$ \bar{x}^6$};
                \node[main] (5) [below left of=4] {$ \bar{x}^5$};
                \node[main] (6) [left of=5] {$ \bar{x}^3$};
				
				\draw (1) -- (2);
				\draw (2) -- (3);
				\draw (3) -- (4);
                \draw (4) -- (5);
                \draw (5) -- (6);
                \draw (6) -- (1);
				
			\end{tikzpicture} 

		\end{minipage}
	\end{minipage}
	\caption{Take $\bar{x}^1=(0,1,4,5,8), \bar{x}^2= (0,2,4,5,8), \bar{x}^3=(0,1,4,6,8), \bar{x}^4=(0,2,4,7,8), \bar{x}^5=(0,3,4,6,8), \bar{x}^6=(0,3,4,7,8)$ and $Y =  \{\bar{x}^1,\bar{x}^2,\bar{x}^3,\bar{x}^4,\bar{x}^5,\bar{x}^6\}.$ (left) A graph $G$ for which the cycle 
   $\gamma=\bar{x}^1 - \bar{x}^2 - \bar{x}^3 + \bar{x}^4 + \bar{x}^5 - \bar{x}^6$ is minimally supported on $Y.$ The set $E_\text{supp}$ is represented in black, while the edges in $E_\text{diff} \setminus (E_\text{diff} \cap E_\text{rem})$ are gray. (right) The structure graph $s(Y).$}
        \label{fig:even cycle minimally supported on Y}
\end{figure}
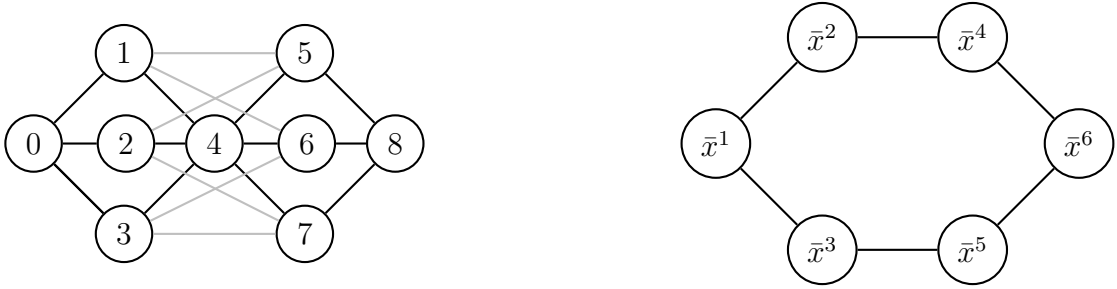

Now, select any $\bar{x}^{j_0} \in Y,$ and take $\gamma_Y = \sum_{j\in J}(-1)^{d(\bar{x}^j, \bar{x}^{j_0})}a_{j_0}\bar{x}^{j}$, where $d$ is the path distance in $s(X)\vert_{Y}.$ This is a cycle in $EMC_{k,k}(G)$ minimally supported on $Y,$ per the discussion following Definition \ref{def:structuregraph}, as illustrated in Figures \ref{fig:square minimally supported on Y} and Figures \ref{fig:even cycle minimally supported on Y}. Now, there is a nonempty collection of trails $Z \subseteq Y$ for which the coefficients in $\gamma_Y$ matches its coefficient in $\gamma.$ Thus, the cycle $\gamma$ can be written as $\gamma = \gamma' + \gamma_Y$ for some $\gamma'$  minimally supported on $X \setminus Z,$ which does not contain the circuit $s(X)\vert_{Y}.$

The structure graph $s(X \setminus Z)$ has strictly fewer minimal-length simple circuits than $s(X).$ By repeating this process inductively, we can decompose any cycle supported on a local, compatible family $X$ into summands minimally supported on local subsets $Y_1, \dots Y_r$ for which $s(X)_{Y_a}$ is a minimal-length simple circuit or such a circuit along with an adjoined 3-clique, along with a summand supported on a clique-forest subgraph of $s(X).$ This final summand decomposes into summands minimally supported on its constituent clique-trees, which each local and compatible by construction.
\end{proof}

\begin{remark}
    Theorem \ref{thm:cycle_decomp} does not quite provide a cycle basis for $EMH_{k,k}$, but it does allow us to enumerate possible cycles. In the special case $k=2$, the decomposition produces a structure graph with a single clique, as there is only one vertex which can differ from trail to trail in any given local compatible family of $2$-trails. Each edge in this clique witnesses a pair of trails which are supported on a full subgraph of $G$ isomorphic to either $C_4$ or $F_4.$ Thus, applying this decomposition to the collection of all such families in $G$ recovers Lemma \ref{lem:emh22}. 
    
    Careful accounting of the way such subgraphs interact could in principle be used to generalize Lemma \ref{lem:emh22} to higher $k$, but the necessary combinatorics quickly become complicated.
    For example, in the case $k=3$, the only possible minimal circuits in the structure graph for a local compatible family are isomorphic to $C_4$, as illustrated in Figure \ref{fig:square minimally supported on Y}. These witness subgraphs which are either isomorphic to the graph illustrated in that figure, or to variants of that graph which include edges $\{1, 2\}$ and/or $\{3, 4\}$. The cycles resulting from such families may interact in non-trivial ways. Rather than pursuing these combinatorics directly, our goal will be to apply these decompositions as obstructions, allowing us to study when $EMH_{k,k}(G)$ vanishes.
    
\end{remark}

\begin{remark}
    In the proof of Theorem \ref{thm:cycle_decomp}, when considering minimal length circuits in $s(X)$, we saw that each such circuit would have had minimal length 4 if the trail $\bar{x}^{j_3} =(x^{j_0}_0, \dots x^{j_1}_t, \dots , x^{j_2}_{t'}, \dots , x^{j_0}_k)$ were in the collection $Y.$ It is interesting to observe that this trail is always in $T_{k,k}(G)$ for $G$ which supports the three trails $\bar{x}^{j_0}, \bar{x}^{j_1},$ and $\bar{x}^{j_2}$ in the proof, so there are always eulerian magnitude cycles with structure graphs isomorphic to $C_4$ whenever larger cyclic structure graphs can be found.
\end{remark}

If we combine Theorem \ref{thm:cycle_decomp} with the observation that every cycle in $EMC_{k,k}(G)$ can be decomposed into cycles minimally supported on disjoint local collections of trails\footnote{Due to the the block structure of the differential.}, we obtain our desired spanning set for $\ker(\partial_{k,k}) \cong EMH_{k,k}(G).$ 

\begin{cor}\label{cor:cycle_spanning_set}
    Let $G$ be a graph. Then $\ker(\partial_{k,k})\subseteq EMC_{k,k}(G)$ is spanned by cycles $\gamma$ minimally supported on local, compatible collections $X_\gamma \subseteq ET_{k,k}(G)$ so that $s(X_\gamma)$ is a clique-tree, or $s(X_\gamma) \cong C_{d}$ for $d$ even.
\end{cor}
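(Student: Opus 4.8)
The statement is an assembly of pieces already in hand: Corollary~\ref{cor:homisker} identifies $EMH_{k,k}(G)$ with $\ker(\partial_{k,k})$, the ``block structure'' of the differential splits an arbitrary cycle into cycles supported on small pieces, and Theorem~\ref{thm:cycle_decomp} decomposes each piece. So I would take an arbitrary $\gamma = \sum_{\bar x \in S} a_{\bar x}\,\bar x \in \ker(\partial_{k,k})$, with $S \subseteq ET_{k,k}(G)$ its support and all $a_{\bar x}\neq 0$, and exhibit $\gamma$ as an integer combination of cycles of the required form. The key combinatorial input, and the one place where the hypotheses $k = \ell$ and ``eulerian'' are genuinely used, is the following: if $\bar x,\bar y \in ET_{k,k}(G)$ are distinct and $\partial^i_{k,k}(\bar x) = \partial^j_{k,k}(\bar y)\neq 0$ for some $i,j\in[k-1]$, then $i = j$ and $\bar x,\bar y$ differ in exactly one (necessarily interior) landmark. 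Indeed, an eulerian $k$-trail of length $k$ has all consecutive landmark-distances equal to $1$, so, as in the proof of Lemma~\ref{lem:diff_zero_so_edge}, $\partial^i_{k,k}(\bar x)\neq 0$ forces $d(x_{i-1},x_{i+1}) = 2$; comparing the two expressions for the shared $(k-1)$-trail coordinatewise in the case $i<j$ forces $y_{i-1}=x_{i-1}$ and $y_i=x_{i+1}$ to be consecutive landmarks of $\bar y$, contradicting $d(y_{i-1},y_i)=1$. (This is exactly the ``shift'' that does occur for ordinary magnitude chains, as flagged in the remark after Lemma~\ref{lem:diff_zero_so_edge}.)

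\textbf{Block decomposition into local minimal cycles.} Form the graph $\Sigma$ on vertex set $S$ with $\bar x\sim\bar y$ whenever they share a nonzero differential term. Differential terms coming from distinct connected components of $\Sigma$ are distinct basis elements, so the relation $\partial_{k,k}(\gamma)=0$ holds componentwise: restricting $\gamma$ to each component of $\Sigma$ yields a cycle. Repeatedly splitting off minimal cycles (Definition~\ref{def:minsupp}) and inducting on support size, I would write $\gamma$ as an integer combination of cycles each minimally supported on some collection $X\subseteq ET_{k,k}(G)$. Each such $X$ has $\Sigma|_X$ connected, since otherwise the restriction to a $\Sigma$-component of $X$ would be a cycle with the same coefficients on a proper subcollection, contradicting minimality. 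Hence, using the cancellation lemma, either $|X|=1$ and $\partial_{k,k}$ kills the single trail (a degenerate case whose one-vertex structure graph is a trivial clique-tree), or all trails of $X$ share a starting and an ending point and each differs from some other in exactly one interior landmark, i.e., $X$ is \emph{local} in the sense of Definition~\ref{def:class_graph_construction}.

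\textbf{Compatibility and conclusion via Theorem~\ref{thm:cycle_decomp}.} Such a local $X$ carrying a genuine cycle $\gamma_X\in EMC_{k,k}(G)$ is automatically \emph{compatible} in the sense of Definition~\ref{def:fullclassgraph}: every edge of $E_{\text{supp}}$ is an edge of $G$, while each edge of $E_{\text{rem}}$ must be absent from $G$ by Lemma~\ref{lem:diff_zero_so_edge} (it is precisely the non-edge forcing a differential term to survive), so $E_{\text{supp}}\cap E_{\text{rem}}=\varnothing$. Theorem~\ref{thm:cycle_decomp} then applies to $\gamma_X$ and writes it as a sum of cycles minimally supported on collections $Y$ with $s(Y)$ a clique-tree or $s(Y)\cong C_d$ with $d$ even. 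Assembling these decompositions over all peeled-off minimal cycles expresses $\gamma$ in the claimed form, proving the corollary.

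\textbf{Anticipated main obstacle.} I do not expect a single deep step; the real work is the bookkeeping in the middle two steps --- verifying that peeling off minimal cycles and passing to $\Sigma$-components cannot destroy locality, isolating and disposing of the degenerate single-trail pieces, and confirming that a minimal cycle living in an actual graph $G$ is supported on a compatible collection. The cancellation lemma of the first step is short but load-bearing: without it the $\Sigma$-components need not be local collections at all, and Theorem~\ref{thm:cycle_decomp} would have nothing to act on.
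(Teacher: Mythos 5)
Your overall route is the same as the paper's (which gives only a one-sentence argument): reduce via Corollary~\ref{cor:homisker}, split into cycles minimally supported on local collections via the block structure of $\partial_{k,k}$, and then invoke Theorem~\ref{thm:cycle_decomp}. The ``cancellation lemma'' you isolate at the start is correct and is a genuinely useful explicit formulation of a fact the paper uses only tacitly (it is what ensures trails sharing a nonzero differential term agree everywhere but one interior landmark, hence that $\Sigma$-components are local collections); writing it down is an improvement on the paper's presentation.

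There is, however, a real imprecision in your compatibility step. You assert that ``each edge of $E_{\text{rem}}$ must be absent from $G$ by Lemma~\ref{lem:diff_zero_so_edge}.'' That is not true in general: $E_{\text{rem}}$ is indexed by \emph{all} maximal cliques of $s(X_\gamma)$, and $s(X_\gamma)$ is defined purely combinatorially, so it can contain ``non-functional'' cliques $Q_t$ whose shared landmark position $b(t)$ has $\{x_{b(t)-1},x_{b(t)+1}\}\in E$, so that the corresponding differential term is already zero and imposes no constraint. A concrete instance: with $k=4$, $\bar x^1=(0,1,2,3,4)$, $\bar x^2=(0,1',2,3,4)$, $\bar x^3=(0,1',2',3,4)$, $\bar x^4=(0,1,2',3,4)$, one can build a graph $G$ with $\{0,2\}\in E$ where $\gamma=\bar x^1-\bar x^2+\bar x^3-\bar x^4$ is minimally supported on $X=\{\bar x^1,\dots,\bar x^4\}$, the clique $\{\bar x^1,\bar x^2\}$ is non-functional, and the $E_{\text{rem}}$ edge $\{0,2\}$ lies in $E$. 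In this example $X$ is still compatible (since $\{0,2\}\notin E_{\text{supp}}$), but your justification for that fact is wrong: compatibility does not follow from $E_{\text{rem}}\subseteq E^{c}$ because that containment fails. To close the gap one must argue directly that if some $e\in E_{\text{supp}}\cap E_{\text{rem}}$, then the corresponding clique is non-functional and, tracing through, the trail whose consecutive edge witnesses $e\in E_{\text{supp}}$ lies outside that clique, forcing a surviving differential term with no partner in $X$, contradicting that $\gamma$ is a cycle minimally supported on $X$. To be fair, the paper's own one-sentence proof never addresses compatibility at all, so this is a gap you inherited and partially noticed; but the specific claim you used to dispatch it is false as stated and should be replaced.
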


For general graphs $G$, it is difficult to count the number of subgraphs for which the corresponding structure graphs lie in these classes, or to count the number of decompositions a particular cycle might have along these lines. However, as we will see in Sections \ref{sec:EMH_ER} and \ref{sec:EMH_RGG}, for some important families of random graphs it is possible to obtain useful bounds on when such subgraphs can arise. To apply those results to the study of magnitude homology, however, we will need to think about how it is related to this new object.

\subsection{Relationship to magnitude homology}
\label{subsec:relationship}

As the eulerian magnitude chain complex is a subcomplex of the usual magnitude chain complex, we can construct a long exact sequence relating the two in the usual way. As the generators for the eulerian magnitude chain groups are a subset of those for magnitude chains, it is relatively easy to characterize the quotient: it is generated by those $k$-trails which repeat at least one vertex. Drawing inspiration from the study of spaces of singular curves, we make the following definition.

\begin{definition}
	Let $G$ be a graph.
	The \emph{discriminant $(k,\ell)$-magnitude chain group} $DMC_{k,\ell}(G)$ is the quotient
	\[
	DMC_{k,\ell}(G) = \frac{MC_{k,\ell}(G)}{EMC_{k,\ell}(G)}
	\]
	equipped with the usual quotient differential.
\end{definition}

By definition of $EMC_{k,\ell}(G)$ and $DMC_{k,\ell}(G)$ we have the following short exact sequence of chain complexes
\[
0 \to EMC_{\ast,\ell}(G) \xrightarrow{\iota} MC_{\ast,\ell}(G) \xrightarrow{\pi} DMC_{\ast,\ell}(G) \to 0.
\]
where $\iota$ and $\pi$ are the induced inclusion and quotient maps, respectively. Therefore, we obtain a long exact sequence in homology
\begin{equation}
	\label{eq:LES}
	\cdots \to DMH_{k+1,\ell}(G) \xrightarrow{\delta_{k+1}} EMH_{k,\ell}(G) \xrightarrow{\iota_*} MH_{k,\ell}(G) \xrightarrow{\pi_*} DMH_{k,\ell}(G) \xrightarrow{\delta_{k}} \cdots,
\end{equation}
where the map $\delta_{k+1}$ as given by the Snake Lemma is defined as

\[
\begin{matrix}
	\delta_{k+1} : &DMH_{k+1,\ell}(G) 			&\to 		&EMH_{k,\ell}(G) \\
	&[(x_0,\dots,x_{k+1})]		&\mapsto	&[\bar{\partial}_{k+1,\ell}(x_0,\dots,x_{k+1})],
\end{matrix}
\]
where $\bar{\partial}_{k+1,\ell} = \sum_{i\in[k-1]}(-1)^i \bar{\partial}^i_{k+1,\ell}$ with
\[
\bar{\partial}^i_{k+1,\ell}(x_0,\dots,x_{k+1})=
\begin{cases}
	(x_0,\dots,\hat{x}_i,\dots,x_k)& \text{ if }(x_0,\dots,\hat{x}_i,\dots,x_k)\in ET_{k,\ell}(G)\\
	0& \text{ otherwise.}
\end{cases}
\]

Note that, in general, it is not true that $EMH_{k,\ell}(G)$ is a subgroup of $MH_{k,\ell}(G),$ so the long exact sequence in equation (\ref{eq:LES}) does not split.
Consider, for example the graph $G$ in Figure \ref{fig:example relation} and consider the element $(0,1,2,3,1,4) \in MC_{5,5}(G)$.
Applying the differential map, we have
\[
\partial_{5,5}(0,1,2,3,1,4) = (0,1,2,3,4).
\]

So the generator $(0,1,2,3,4) \in MC_{4,5}(G)$ is a boundary and will be trivial in $MH_{4,5}(G)$. On the other hand, the same tuple $(0,1,2,3,4)$ cannot be a boundary in $EMC_{4,5}(G)$ since the $5$-trail already contains all vertices of $G$. Hence, in general the map $EMH_{k, \ell} \to MH_{k, \ell}$ is not injective.

\begin{figure}[h]
	
	\centering
	\begin{tikzpicture}[node distance={15mm}, thick, main/.style = {draw, circle}]
		\node[main] (0) {$0$}; 
		\node[main] (1) [right of=0] {$1$}; 
		\node[main] (3) [above right of=1] {$3$};
		\node[main] (2) [below right of=3] {$2$};
		\node[main] (4) [below right of=1] {$4$};
		\draw (0) -- (1);
		\draw (1) -- (2);
		\draw (1) -- (3);
		\draw (2) -- (3);			
		\draw (2) -- (4);
		\draw (1) -- (4);
		
		\path[every node/.style={font=\sffamily\small}]
		(0) edge [bend right] node {} (2);
		
	\end{tikzpicture} 
	\caption{In this graph, the element $[(0,1,2,3,4)]$ is trivial in $MH_{4,5}(G)$ but not in $EMH_{4,5}(G)$.}
	\label{fig:example relation}
\end{figure}
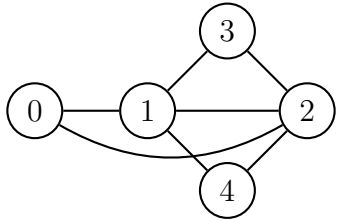

Despite the fact that the long exact sequence in equation (\ref{eq:LES}) fails to split in general, along the $k = \ell$ line we can still leverage the relationships 
between generators to make some interesting observations. As $EMH_{k,k}(G)$ is the first potentially non-trivial group in the long exact sequence of equation (\ref{eq:LES}), for every $k$ it holds that the map $i_* : EMH_{k,k}(G) \to MH_{k,k}(G)$ is injective.
For the same reason, if $EMH_{k,k}(G)$ is trivial, then $MH_{k,k}(G)$ is a subgroup of $DMH_{k,k}(G)$. In fact, in the regime where $EMH_{k,k}(G)$ is trivial, we are able to explicitly describe the generators of $MH_{k,k}(G)$ and, if $k \geq 5$ establish an isomorphism $MH_{k,k}(G) \cong DMH_{k,k}(G)$. 

\begin{theorem}
\label{thm:relation MH EMH DMH}
Fix an integer $k \geq 2.$ Let $G=(V,E)$ be a graph and suppose $EMH_{n,n}(G)\cong\langle 0 \rangle$ for $2 \leq n \leq k$.
Then every generator of $MH_{k,k}(G) \cong \ker(\partial_{k,k})$ is a $k$-trail of the form $(x_0, x_1, x_0, x_1, \dots)$ which visits only one pair of vertices $x_0\neq x_1 \in V,$
\end{theorem}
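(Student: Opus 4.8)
The plan is to prove the sharper statement that $\ker(\partial_{k,k})$ is the free abelian group on the alternating $k$-trails $(x_0,x_1,x_0,x_1,\dots)$; the theorem then follows from Corollary~\ref{cor:homisker}. First, each such trail is a cycle: if $\{u,v\}\in E$ and $\bar z=(u,v,u,v,\dots)\in T_{k,k}(G)$, then for every interior index $i\in[k-1]$ we have $z_{i-1}=z_{i+1}$, so $\len(z_0,\dots,\hat{z}_i,\dots,z_k)=k-2<k$ and hence $\partial^i_{k,k}\bar z=0$; summing, $\partial_{k,k}\bar z=0$, and $\bar z$ visits exactly the pair $\{u,v\}$. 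So the content is the reverse inclusion: every cycle is a $\mathbb Z$-combination of alternating trails.

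For that I would exploit the hypothesis at $n=2$. By the rank computation preceding Lemma~\ref{lem:emh22}, $EMH_{2,2}(G)\cong 0$ is equivalent to $G$ being triangle-free \emph{and} every pair of vertices having at most one common neighbor. Triangle-freeness gives a clean normal form for the face maps: for a walk $\bar x$ of length $k$ and $j\in[k-1]$, either $x_{j-1}=x_{j+1}$, in which case $\partial^j_{k,k}\bar x=0$, or $x_{j-1}\neq x_{j+1}$, in which case $\{x_{j-1},x_{j+1}\}\notin E$ (else $x_{j-1},x_j,x_{j+1}$ is a triangle), so $d(x_{j-1},x_{j+1})=2$ and $\partial^j_{k,k}\bar x$ is the $(k-1)$-trail $(x_0,\dots,x_{j-1},x_{j+1},\dots,x_k)$ of length $k$ whose single length-two step is its $j$-th gap. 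Letting $V_j\subseteq MC_{k-1,k}(G)$ be the span of the $(k-1)$-trails of length $k$ whose length-two step is the $j$-th gap, we get $MC_{k-1,k}(G)=\bigoplus_{j=1}^{k-1}V_j$ and $\im(\partial^j_{k,k})\subseteq V_j$; therefore $\partial_{k,k}\gamma=\sum_{j}(-1)^j\partial^j_{k,k}\gamma=0$ forces $\partial^j_{k,k}\gamma=0$ for each $j$, i.e. $\ker(\partial_{k,k})=\bigcap_{j=1}^{k-1}\ker(\partial^j_{k,k})$.

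Now let $\gamma=\sum_{\bar x}a_{\bar x}\bar x\in\ker(\partial_{k,k})$ and suppose some walk $\bar x$ with $x_{j-1}\neq x_{j+1}$ appears with $a_{\bar x}\neq 0$. Every walk of length $k$ with the same image under $\partial^j_{k,k}$ is obtained from $\bar x$ by replacing $x_j$ with a common neighbor of $x_{j-1}$ and $x_{j+1}$; since that pair has $x_j$ as its \emph{only} common neighbor, $\bar x$ is the unique such walk, so the coefficient of $\partial^j_{k,k}\bar x$ in $\partial^j_{k,k}\gamma$ equals $\pm a_{\bar x}\neq 0$ — contradicting $\partial^j_{k,k}\gamma=0$. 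Hence $\gamma$ is supported only on walks with $x_{j-1}=x_{j+1}$ for all $j\in[k-1]$, i.e. on trails $(x_0,x_1,x_0,x_1,\dots)$ with $x_0\neq x_1$; combined with the first paragraph, $\ker(\partial_{k,k})$ is free on these, which is the assertion. I expect the crux to be the observation underlying the middle paragraph: that $EMH_{2,2}(G)\cong 0$ simultaneously delivers triangle-freeness — which is exactly what makes $\partial_{k,k}$ split as a direct sum over its faces, by tracking where the lone length-two shortcut lands — and the ``at most one common neighbor'' condition, which makes each $\partial^j_{k,k}$ injective on the basis elements it does not kill. One should resist the more obvious attempt of plugging $EMH_{k,k}(G)\cong 0$ into the long exact sequence~(\ref{eq:LES}) to get $MH_{k,k}(G)\hookrightarrow DMH_{k,k}(G)$: this is correct but does not identify the generators, since $DMH_{k,k}(G)$ resists direct computation.
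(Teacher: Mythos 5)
Your proof is correct, and it takes a meaningfully different route from the paper's. The paper's argument splits into two cases: for a single trail $\bar{x}$ with $\partial_{k,k}\bar{x}=0$ it argues that any eulerian subtrail $(x_i,\dots,x_j)$ with $j-i\geq 2$ would force a nonzero differential term via the vanishing of $EMH_{j-i,j-i}(G)$, invoking the full stack of hypotheses $EMH_{n,n}(G)\cong 0$ for $2\le n\le k$; it then runs a separate cancellation argument, anchored solely at $EMH_{2,2}(G)\cong 0$, for genuine linear combinations. You instead unpack $EMH_{2,2}(G)\cong 0$ once and for all into triangle-freeness together with the unique-common-neighbor property, observe that triangle-freeness makes $MC_{k-1,k}(G)=\bigoplus_j V_j$ (graded by where the unique length-two step sits) with $\im(\partial^j_{k,k})\subseteq V_j$, so $\ker(\partial_{k,k})=\bigcap_j\ker(\partial^j_{k,k})$, and then get injectivity of $\partial^j_{k,k}$ on the basis elements it does not kill from the unique-common-neighbor condition. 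This buys two things: a uniform argument covering single generators and linear combinations simultaneously, and a sharper result — only the hypothesis at $n=2$ is used, so the theorem actually holds under a weaker assumption than stated. (The paper's first paragraph could also have been specialised to $j-i=2$, at which point it likewise needs only $EMH_{2,2}$, but your proof makes the redundancy visible.) One small reading caveat: the displayed rank formula preceding Lemma~\ref{lem:emh22} should be understood with $|V_{x_i,x_j}|-1$ replaced by $\max(|V_{x_i,x_j}|-1,0)$, since non-adjacent pairs at distance $>2$ contribute nothing; this does not affect your extraction of the girth-$\geq 5$ characterisation from $EMH_{2,2}(G)\cong 0$.
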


In Sections \ref{sec:EMH_ER} and \ref{sec:EMH_RGG}, we will describe regimes in which this theorem holds for important classes of random graphs.

\begin{proof}
Fix $k \geq 2.$ Consider a graph $G=(V,E)$ and assume $EMH_{n,n}(G)\cong\langle 0 \rangle$ for  $2\leq n\leq k$.

First, we consider the case of cycles supported on individual trails. Let $\bar{x}=(x_0,\dots,x_k) \in MC_{k,k}(G)$ and suppose $\partial_{k,k}(\bar{x})=0.$ Suppose we have $i < j \in [k]_0$ such that there is an eulerian sub-trail $(x_i, \dots, x_j) \in ET_{j-i,j-i}(G)$ of $\bar{x}.$ If $j - i \geq 2,$ then as $EMH_{j-i, j-i}(G) \cong \ker(\partial_{j-i,j-i}\vert_{EMC_{j-i,j-i}(G)})$ vanishes, for some $a \in [j-i-1]$ we have $\partial_{j-i,j-i}^a(x_i, x_{i+1}, \dots x_j) \neq 0,$ and thus $\partial_{k,k}^{i+a}\bar{x} \neq 0.$ Thus $j = i+1.$ However, for any $k$-trail, pairs of sequential vertices are distinct and thus must form a maximal  eulerian subtrail, so $x_i = x_{i+2}$ for every $i \in [k-1]_0.$ Thus, $\bar{x} = (x_0, x_1, x_0, \dots).$

Consider now an element $\sum_{i\in[m]} a_i\bar{x}^i\in MC_{k,k}(G)$ so that $\partial_{k,k}(\sum_{i\in[m]} a_i\bar{x}^i) = 0,$ and for which $\partial_{k,k}(\bar{x}^i) \neq 0$ for each $i\in [m].$ By the contrapositive of the argument above, we see that each involved $k$-trail $\bar{x}^i$ visits at least three vertices, and indeed non-vanishing terms in the differential can only occur away from subtrails which repeatedly cross the same edge. Suppose $\partial^r_{k,k}(\bar{x}^i) \neq 0$ then $r\in[k-1]$, $x_{r-1}^i, x_{r}^i,$ and $ x_{r+1}^i$ are all distinct. Consider $(x_{r-1}^i, x_{r}^i, x_{r+1}^i)\in ET_{2,2}(G)\subseteq EMC_{2,2}(G),$ and observe that $\partial^r_{k,k}(\bar{x}^i) \neq 0$ implies $\partial^1_{2,2}((x_{r-1}^i, x_{r}^i, x_{r+1}^i)) \neq 0.$ However, if this boundary is to cancel, there must be $j \neq i \in [m]$ so that $\partial^r_{k,k}(\bar{x}^i) = \partial^r_{k,k}(\bar{x}^j)\neq 0$. These two trails agree except at the term $x_r^i \neq x_r^j,$ so we have $(x_{r-1}^i, x_{r}^j, x_{r+1}^i)\in ET_{2,2}(G)$ as well, and their difference forms a non-zero homology class, contradicting the fact that $EMH_{2,2}(G) \cong \langle 0 \rangle.$ Thus, no such elements can exist, so all cycles in $MH_{k,k}(G)$ are of the required form. \qedhere
\end{proof}

The following is an immediate consequence of Theorem \ref{thm:relation MH EMH DMH}.

\begin{cor}
Fix an integer $k \geq 2.$ If $G=(V,E)$ is such that $EMH_{n,n}(G)=\langle 0 \rangle$ for $2 \leq n\leq k$, then $|MH_{k,k}(G)|=2|E|=|MH_{1,1}(G)|.$
\end{cor}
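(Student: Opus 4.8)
The plan is to read off the rank of $MH_{k,k}(G)$ from the explicit generating set supplied by Theorem~\ref{thm:relation MH EMH DMH}, and then compare with the standard identification of $MH_{1,1}(G)$. First I would note that, by Corollary~\ref{cor:homisker}, $MH_{k,k}(G)\cong\ker(\partial_{k,k})$ is a subgroup of the free abelian group $MC_{k,k}(G)$, hence itself free abelian, so $|MH_{k,k}(G)|$ is its rank. Under the hypothesis $EMH_{n,n}(G)\cong\langle 0\rangle$ for $2\le n\le k$, Theorem~\ref{thm:relation MH EMH DMH} tells us that $\ker(\partial_{k,k})$ is spanned by those $k$-trails of the form $(x_0,x_1,x_0,x_1,\dots)$ that visit a single pair $x_0\ne x_1$.

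Next I would pin down exactly which such trails arise and check that they are cycles. For $(x_0,x_1,x_0,x_1,\dots)$ to be a $k$-trail of length $k$ we need $d(x_0,x_1)=1$, i.e.\ $\{x_0,x_1\}\in E$; conversely, each ordered pair $(x_0,x_1)$ with $\{x_0,x_1\}\in E$ determines exactly one such $k$-trail, and it lies in $\ker(\partial_{k,k})$: deleting any interior landmark $x_i$, $i\in[k-1]$, identifies $x_{i-1}$ with $x_{i+1}$ (they have the same parity), so the length of the resulting tuple drops and $\partial^i_{k,k}$ vanishes on it, whence $\partial_{k,k}(x_0,x_1,x_0,\dots)=0$. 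The assignment $(x_0,x_1)\mapsto(x_0,x_1,x_0,\dots)$ is injective (read off the first two coordinates), so these trails are in bijection with the $2|E|$ ordered pairs of vertices spanning an edge of $G$; write $B$ for this set of trails, so $|B|=2|E|$.

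Then I would conclude. The elements of $B$ are distinct members of the standard free basis of $MC_{k,k}(G)$, hence $\mathbb{Z}$-linearly independent, and by the first paragraph they span $\ker(\partial_{k,k})$; therefore $B$ is a basis of $\ker(\partial_{k,k})\cong MH_{k,k}(G)$ and $|MH_{k,k}(G)|=2|E|$. Since $MH_{1,1}(G)$ is the free abelian group on the set of oriented edges of $G$ (Hepworth and Willerton, as recalled above), $|MH_{1,1}(G)|=2|E|$ as well, so $|MH_{k,k}(G)|=2|E|=|MH_{1,1}(G)|$.

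I do not expect a genuine obstacle here. The only points requiring a little care are interpreting $|{-}|$ as rank (justified by freeness of $\ker(\partial_{k,k})$), the brief verification that the alternating trails really are cycles and really biject with the oriented edges, and the observation that their linear independence is automatic because they are distinct basis vectors of $MC_{k,k}(G)$. All of the substantive content — that these alternating trails exhaust the cycles — is already carried out in Theorem~\ref{thm:relation MH EMH DMH}.
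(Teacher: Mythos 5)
Your proof is correct, and it is exactly the argument the paper is pointing at when it labels the corollary an ``immediate consequence'' of Theorem~\ref{thm:relation MH EMH DMH} (the paper itself supplies no further details). You supply the right three bookkeeping checks — that $\ker(\partial_{k,k})$ is free so $|{-}|$ means rank, that each alternating trail $(x_0,x_1,x_0,\dots)$ really is a cycle because deleting an interior landmark makes two consecutive entries coincide and drops the length, and that these trails biject with ordered pairs spanning an edge and are automatically independent as distinct standard basis vectors — together with the Hepworth--Willerton identification of $MH_{1,1}(G)$ with the free group on oriented edges.
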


In a similar spirit to Theorem \ref{thm:relation MH EMH DMH}, under weaker vanishing conditions, we can prove the following isomorphism.

\begin{theorem}
\label{thm:iso MH DMH}
Fix $k\geq 5.$ Suppose $EMH_{2,2}(G) \cong EMH_{k,k}(G) \cong \langle 0 \rangle,$ then $MH_{k,k}(G) \cong DMH_{k,k}(G)$.
\end{theorem}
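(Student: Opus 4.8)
The plan is to extract the isomorphism from the long exact sequence (\ref{eq:LES}) by showing that, under these hypotheses, the connecting map $\delta_{k+1}\colon DMH_{k+1,k}(G) \to EMH_{k,k}(G)$ and the inclusion $\iota_*\colon EMH_{k,k}(G)\to MH_{k,k}(G)$ together force $\pi_*\colon MH_{k,k}(G)\to DMH_{k,k}(G)$ to be an isomorphism. Since $EMH_{k,k}(G)\cong \langle 0\rangle$, exactness immediately gives that $\pi_*$ is injective: the segment $EMH_{k,k}(G)\xrightarrow{\iota_*} MH_{k,k}(G)\xrightarrow{\pi_*} DMH_{k,k}(G)$ has $\operatorname{im}(\iota_*)=0=\ker(\pi_*)$. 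So the entire content is surjectivity of $\pi_*$, which by exactness is equivalent to showing that the connecting homomorphism $\delta_k\colon DMH_{k,k}(G)\to EMH_{k-1,k}(G)$ is the zero map.

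**Reducing to a statement about discriminant cycles.** First I would note that by Lemma \ref{lem:LowerTriangular} (applied to the eulerian subcomplex, which sits inside $MC$), $EMC_{k-1,k}(G)$ need not vanish since $k-1 < k$; so we cannot conclude $\delta_k = 0$ for trivial dimension reasons and must argue genuinely. The map $\delta_k$ sends the class of a discriminant $k$-cycle $[\bar{z}]$, where $\bar{z}=\sum a_i \bar{x}^i$ with each $\bar{x}^i \in T_{k,k}(G)$ repeating a vertex and $\partial_{k,k}\bar{z}\in EMC_{k-1,k}(G)$, to the class $[\bar\partial_{k,k}\bar z]\in EMH_{k-1,k}(G)$. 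The goal is to show this eulerian cycle is always a boundary, i.e.\ lies in $\operatorname{im}(\partial_{k,k}\colon EMC_{k,k}(G)\to EMC_{k-1,k}(G))$. The key structural input should be Theorem \ref{thm:relation MH EMH DMH} (or rather the argument in its proof): with $EMH_{2,2}(G)\cong EMH_{k,k}(G)\cong 0$, one wants to leverage the analysis of which subtrails of a $k$-trail can carry non-vanishing differential terms. A discriminant trail $\bar{x}^i$ repeats a landmark; its image under $\bar\partial_{k,k}$ only picks up terms from deletions that produce an \emph{eulerian} $(k-1)$-trail of length $k$ — which can only happen at a landmark $x_j$ whose neighbors $x_{j-1},x_{j+1}$ are distinct and whose deletion does not shorten the walk. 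I would trace through, as in the proof of Theorem \ref{thm:relation MH EMH DMH}, that the surviving eulerian cycle $\bar\partial_{k,k}\bar z$ then has the rigid ``back-and-forth'' structure forced by vanishing of $EMH_{2,2}$ and $EMH_{k,k}$, and explicitly exhibit a preimage in $EMC_{k,k}(G)$ by reinserting the deleted landmark.

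**The main obstacle and how to handle it.** The hard part will be handling cancellation across the linear combination $\sum a_i\bar{x}^i$: a single discriminant trail need not have $\partial_{k,k}\bar{x}^i$ eulerian, so the eulerian cycle only emerges after summing, and the preimage must be built globally rather than trail-by-trail. I expect the cleanest route is: given the discriminant cycle $\bar z$, first use $\pi_*$-injectivity and the vanishing hypotheses to show $[\bar\partial_{k,k}\bar z] = 0$ in $EMH_{k-1,k}(G)$ by a diagram chase — specifically, lift the class $[\bar z]\in DMH_{k,k}(G)$ through $\pi$ to a chain in $MC_{k,k}(G)$, show that $\partial_{k,k}$ of that chain is a boundary of an eulerian chain using the fact that $MH_{k,k}(G)$ is fully understood via Theorem \ref{thm:relation MH EMH DMH} (its only generators are the edge-crossing trails, which lie in the discriminant part for $k\geq 4$), and conclude that the ambiguity in the lift exactly accounts for $\operatorname{im}\partial_{k+1,k}$. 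Concretely, I would argue: any $\alpha\in MC_{k,k}(G)$ with $\pi(\alpha)=\bar z$ has $\partial_{k,k}\alpha \in EMC_{k-1,k}(G)$; modifying $\alpha$ by a cycle in $MC_{k,k}(G)$ changes $\partial_{k,k}\alpha$ by an eulerian boundary only if that cycle's boundary is eulerian — but by Theorem \ref{thm:relation MH EMH DMH} all of $\ker\partial_{k,k}\subseteq MC_{k,k}(G)$ is spanned by images of $\partial_{k+1,k}$ together with the edge-crossing trails (using $k\geq 5$ so these are genuinely in $MC_{k,k}$ and nontrivial in $MH$), and a careful bookkeeping shows $\delta_k$ kills everything. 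The role of the bound $k\geq 5$ (rather than $k\geq 2$) is presumably exactly that the edge-crossing trails $(x_0,x_1,x_0,x_1,\dots)$ need length $\geq 5$ to be discriminant-nontrivial in a way that makes the counting work, mirroring the $k\geq 5$ hypothesis in the surrounding theorems, so I would flag that as the point requiring the most care.
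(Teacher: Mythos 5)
Your framework is right: from the long exact sequence (\ref{eq:LES}), $EMH_{k,k}(G)\cong 0$ forces $\pi_*$ to be injective, and the content of the theorem is surjectivity, equivalently $\delta_k = 0$. But from there you never actually produce an argument. Your ``diagram chase'' proposal — lift $[\bar z]$ to $\alpha\in MC_{k,k}(G)$ and argue using the structure of $MH_{k,k}(G)$ from Theorem \ref{thm:relation MH EMH DMH} that $\partial_{k,k}\alpha$ is an eulerian boundary — is not a proof: knowing that $MH_{k,k}(G)$ is generated by edge-crossing trails tells you about $\ker\partial_{k,k}$, but does not by itself let you show that $\partial_{k,k}\alpha$ lies in $\partial_{k,k}(EMC_{k,k}(G))$, and you end up asserting ``a careful bookkeeping shows $\delta_k$ kills everything'' without doing the bookkeeping. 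The claim that $\bar\partial_{k,k}\bar z$ has ``rigid back-and-forth structure'' is also confused — it is a cycle in $EMC_{k-1,k}(G)$, whose generators by definition have no repeated landmarks, so there can be no back-and-forth.

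The paper takes a much more direct route that your proposal never considers: it proves the stronger statement that $EMH_{k-1,k}(G)\cong 0$, from which $\delta_k = 0$ trivially. The key observation is about $(k-1)$-trails of length $k$: exactly one consecutive gap has $d(x_j,x_{j+1})=2$ and the rest have distance $1$, and when $k\geq 5$ there are enough unit gaps that, wherever the distance-$2$ gap sits, some three consecutive landmarks $(x_{r-1},x_r,x_{r+1})$ must all be at unit distance, i.e.\ form an element of $ET_{2,2}(G)$. If $\partial^r_{k-1,k}(\bar x)=0$, then $\{x_{r-1},x_{r+1}\}\in E$ and $G\vert_{\{x_{r-1},x_r,x_{r+1}\}}\cong C_3$, contradicting $EMH_{2,2}(G)\cong 0$ via Lemma \ref{lem:emh22}; and for a linear combination, the trail forced to cancel $\partial^r$ produces a $C_4$ or $F_4$ subgraph, again contradicting $EMH_{2,2}(G)\cong 0$. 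This also pins down the real role of $k\geq 5$, which you misidentify: it is not about edge-crossing trails in $MC_{k,k}$ being ``discriminant-nontrivial,'' but about guaranteeing the existence of three consecutive landmarks at unit distance in every $(k-1)$-trail of length $k$ (see Figure \ref{fig:EMH for lower k} for counterexamples when $k<5$).
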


\begin{proof}
Let $k \geq 5$ and suppose $EMH_{2,2}(G)\cong EMH_{k,k}(G) \cong \langle 0 \rangle.$ By the long exact sequence in equation (\ref{eq:LES}), to prove that $MH_{k,k}(G) \cong DMH_{k,k}(G)$ it suffices to show that $EMH_{k-1,k}(G)\cong\langle 0 \rangle$.

Suppose that $\bar{x} = (x_0,\dots,x_{k-1}) \in ET_{k-1,k} \subseteq EMC_{k-1,k}(G)$. Observe that in such a  $(k-1)$-trail, we must have $d(x_i, x_{i+1}) = 1$ for all but one pair of consecutive vertices, for which $d(x_j, x_{j+1}) = 2.$ Since $k \geq 5$, regardless of the value of $j\in[k-1]_0$, the trail $\bar{x}$ will have at least three consecutive vertices $x_{r-1},x_r,x_{r+1}, r \in [k-2]$ for which $d(x_{i-r},x_r)=d(x_r,x_{r+1})=1$. That is, $(x_{r-1}, x_r, x_{r+1}) \in ET_{2,2}(G).$

\sloppy
Now, suppose by contradiction $\partial_{k-1,k}(\bar{x}) = 0$. Then $\partial^r_{k-1,k}(\bar{x}) = 0,$ whence $\partial^1_{2,2}(x_{r-1}, x_r, x_{r+1}) = 0$ and so $\{x_{r-1},x_{r+1}\} \in E$ by Lemma \ref{lem:diff_zero_so_edge}. However, then $G\vert_{\{x_{r-1}, x_r, x_{r+1}\}} \cong C_3$, which by Lemma \ref{lem:emh22} contradicts the assumption that $EMH_{2,2}(G) \cong \langle 0 \rangle.$ So, we have $\{x_{r-1},x_{r+1}\} \not\in E$ and thus $\partial^r_{k-1,k}(\bar{x}) \neq 0.$

Suppose now that there exists some linear combination $\sum_{i\in [m]}a_i\bar{x}^i  \in EMC_{k-1,k}(G)$ for which $\partial_{k-1,k}(\sum_{i\in [m]}a_i\bar{x}^i) = 0.$ Consider $\bar{x}^1 = (x_0^1, \dots, x_{k-1}^1)$ and apply the above argument to again find $r \in [k-1]$ so that $d(x_{i-r}^1,x_r^1)=d(x_r^1,x_{r+1}^1)=1.$ As $\partial^r_{k-1,k}(\bar{x}^1) \neq 0$, in order to cancel this term there must be some $\bar{x}^j$, $j \neq 1$ for which $\partial^r(\bar{x}^j) = \partial^r(\bar{x}^1),$ $x_r^1 \neq x_r^j$ and $x_i^1 = x_1^j$ for $i \neq r.$ However, then $G\vert_{\{x_{r-1}^1, x_{r}^1, x_{r+1}^1, x_{r}^j\}}$ is isomorphic to either $C_4$ or $F_4,$ as in Lemma \ref{lem:emh22}, in either case again contradicting the assumption that $EMH_{2,2}(G)\cong\langle0\rangle.$ Thus, we must have $EMH_{k-1,k}(G) \cong \langle0\rangle$ as required. \qedhere
\end{proof}

Note that in Theorem \ref{thm:iso MH DMH} the hypothesis $k\geq 5$ was essential. In $EMC_{2,3}(G)$ and $EMC_{3,4}(G),$ graphs such as those illustrated in Figure \ref{fig:EMH for lower k} support trails which generate nontrivial homology classes.

\begin{figure}[h]
	\begin{minipage}{\linewidth}
		\begin{minipage}{0.45\linewidth}
			\centering
			\begin{tikzpicture}[node distance={15mm}, thick, main/.style = {draw, circle}]
				\node[main] (0) {$0$}; 
				\node[main] (1) [above right of=0] {$1$};  
				\node[main] (2) [right of=1] {\textcolor{white}{2}};
				\node[main] (3) [below right of=2] {$3$};
				\node[node distance={18mm}, thick, style={draw,circle},right of=0] (4) {\textcolor{white}{4}};
				\draw (0) -- (1);
				\draw (1) -- (2);
				\draw (2) -- (3);
				\draw (0) -- (4);
				\draw (3) -- (4);
			\end{tikzpicture} 
			\subcaption{In the graph $C_5,$ the generator $(0,1,3) \in EMC_{2,3}(G)$ descends to  a nontrivial homology cycle in $EMH_{2,3}(G).$}
		\end{minipage}
		\hfill
		\begin{minipage}{0.45\linewidth}
			\centering
			\begin{tikzpicture}[node distance={15mm}, thick, main/.style = {draw, circle}]
				\node[main] (0) {$0$}; 
				\node[main] (1) [above of=0] {$1$};  
				\node[main] (2) [right of=1] {\textcolor{white}{2}};
				\node[main] (3) [right of=2] {$3$};
				\node[main] (4) [above  of=3] {$4$};
				\node[main] (5) [right of=0] {\textcolor{white}{5}};
				\node[main] (6) [above  of=2] {\textcolor{white}{6}};
				\draw (0) -- (1);
				\draw (1) -- (2);
				\draw (2) -- (3);
				\draw (3) -- (4);
				\draw (0) -- (5);
				\draw (3) -- (5);
				\draw (1) -- (6);
				\draw (4) -- (6);
			\end{tikzpicture} 
			\subcaption{In the pictured graph, the generator $(0,1,3,4) \in EMC_{3,4}(G)$ represents a nontrivial a homology cycle in $EMH_{3,4}(G).$}
		\end{minipage}
	\end{minipage}
	\caption{Graphs for which $EMH_{k,k}(G)\cong\langle 0 \rangle$ but $EMH_{k-1,k}(G) $ is non-trivial for some $k< 5$.}
	\label{fig:EMH for lower k}
\end{figure}
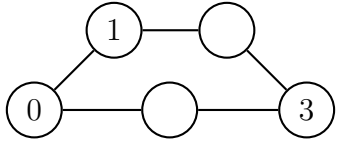
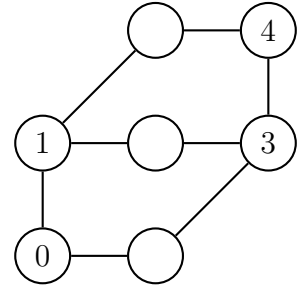

One last question about the relationship between eulerian and standard magnitude homology arises from the example in Figure \ref{fig:example relation}, where we have a non-trivial eulerian magnitude homology cycle that becomes trivial in standard magnitude homology.  Using the tools we have developed, we can provide a partial answer in the first non-trivial case, when $\ell = k+1$, determining which non-trivial eulerian magnitude homology cycles generated by a single tuple $\bar{x}$ become trivial in standard magnitude homology. 

\begin{theorem}
\label{thm:lesnotsplit}
Let $G$ be a graph and fix a nonnegative integer $k.$ Let $\bar{x}=(x_0,\dots,x_{k-1}) \in ET_{k-1,k}(G)$ such that $[\bar{x}] \in EMH_{k-1,k}(G)$ is non-trivial.
Then $[\bar{x}]$ is trivial in $MH_{k-1,k}(G)$ if and only if there are some $i,r\in [k-2]$ with $r - i \geq 2$ so that the $k$-trail \[(x_0,\dots, x_{i},\dots,  x_{r},x_{i}, x_{r+1},\dots,x_{k-1})\] appears in $G,$ in which case $G\vert_{\{x_i, x_{i+1}, \dots, x_r\}} \in \Gamma(\mathcal{C}_{r-i}),$ where $\mathcal{C}_{r-i}$ is the complete class graph for which $\alpha(\mathcal{C}_{r-i}) \cong C_{r-i}.$ is cyclic on $(r-i)$ vertices\footnote{Here, we take $C_2$ to be the graph with two vertices and one edge to simplify the statement.}.
\end{theorem}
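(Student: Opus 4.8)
The plan is to understand which $k$-trails of $G$ map onto $\bar{x}$ under the magnitude differential, and then to decide when such a preimage can be completed to an honest bounding chain. I would begin by recording the rigid structure of $\bar{x}$: since $\bar{x}\in ET_{k-1,k}(G)$ its $k-1$ consecutive gaps have distances summing to $k$, so exactly one gap has length $2$, say between $x_r$ and $x_{r+1}$, and all others have length $1$; moreover, since $\partial_{k-1,k}\bar{x}=0$ and the trails $\partial^a_{k-1,k}\bar{x}$ are pairwise distinct, each of these terms must vanish separately, which by a length count forces $\{x_{a-1},x_{a+1}\}\in E$ for every $a\in[k-2]\setminus\{r,r+1\}$ as well as $d(x_{r-1},x_{r+1})\le 2$ and $d(x_r,x_{r+2})\le 2$. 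These ``forced chords'' are the engine of all the cancellations that follow. Paired with this, I would note the elementary fact that a $k$-trail $\bar{y}\in T_{k,k}(G)$ has $\bar{x}$ among the terms of $\partial_{k,k}\bar{y}$ precisely when $\bar{y}=(x_0,\dots,x_r,w,x_{r+1},\dots,x_{k-1})$ for some common neighbour $w$ of $x_r$ and $x_{r+1}$, in which case the contributed term is $(-1)^{r+1}\bar{x}$.

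For necessity, assume $[\bar{x}]=0$ in $MH_{k-1,k}(G)$, so $\bar{x}=\partial_{k,k}c$ for some $c\in MC_{k,k}(G)$. Writing $c=c^E+c^D$ for its eulerian and discriminant parts, $\partial_{k,k}c^E$ is an eulerian boundary, so via the connecting map $\delta_k$ of the long exact sequence \eqref{eq:LES} one gets $[\bar{x}]=[\partial_{k,k}c^D]$ in $EMH_{k-1,k}(G)$; since $[\bar{x}]\neq 0$ the chain $c^D$ is non-trivial, and by the lift observation above it must contain a non-eulerian $k$-trail $(x_0,\dots,x_r,x_i,x_{r+1},\dots,x_{k-1})$ in which the common neighbour $w=x_i$ is already a landmark of $\bar{x}$. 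Distinctness of consecutive landmarks excludes $i\in\{r,r+1\}$, and I would then rule out the boundary positions (e.g.\ $i\in\{0,r-1\}$ and their mirror images at the far end of the trail), each of which would contradict $[\bar{x}]\neq 0$ in $EMH_{k-1,k}(G)$; this leaves $i\in[k-2]$ with $r-i\ge 2$. Finally, the walk $x_i\sim x_{i+1}\sim\dots\sim x_r$ inside $\bar{x}$ together with the edge $x_i\sim x_r$ coming from this lift exhibits $G\vert_{\{x_i,\dots,x_r\}}$ as a graph of the class whose minimal member is the cycle through those vertices, giving the class-graph statement.

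For sufficiency, given such $i,r$ and the $k$-trail $T=(x_0,\dots,x_r,x_i,x_{r+1},\dots,x_{k-1})$ in $G$, I would compute $\partial_{k,k}T$ term by term: the forced chords kill every deletion away from the insertion, deleting the inserted copy of $x_i$ returns exactly $(-1)^{r+1}\bar{x}$, and the at most two remaining terms come from deletions adjacent to the insertion and are non-eulerian trails with a single repeated landmark. Using that $x_{r+1}$ (respectively $x_r$) is again a common neighbour of the relevant endpoints, each of these error terms is itself $\partial_{k,k}$ of a slightly longer $k$-trail of $G$, so subtracting the corresponding correction chains — a process that telescopes and terminates because each step strictly simplifies the repeated-landmark pattern while never leaving $G$ — produces an explicit $c'$ with $\partial_{k,k}c'=\bar{x}$, hence $[\bar{x}]=0$ in $MH_{k-1,k}(G)$.

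I expect the main obstacle to be precisely this last bookkeeping: $\partial_{k,k}T$ does not land on $\pm\bar{x}$ on the nose, so the bulk of the work is controlling the non-eulerian error terms and showing the correction process terminates (the forced chords from the Setup are exactly what keeps every intermediate trail inside $G$ and what makes the simplification measure decrease). A secondary difficulty is the clean argument in the necessity direction ruling out the degenerate positions — which is what actually pins down $r-i\ge 2$ — and matching the induced subgraph with the stated class graph $\mathcal{C}_{r-i}$ under the $C_2$ convention of the footnote.
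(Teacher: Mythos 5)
Your proposal follows the same core strategy as the paper's proof: a bounding chain for $\bar{x}$ must pass through the non-eulerian $k$-trail obtained by re-inserting a landmark $x_i$ of $\bar{x}$ into its unique gap of length two. You improve on the published argument in two ways. You treat general bounding chains $c = c^E + c^D$ through the long exact sequence (\ref{eq:LES}) rather than tacitly assuming $c$ is a single trail; and you correctly notice that $\partial_{k,k}T$ is not $\pm\bar{x}$ on the nose. Concretely, the two faces of $T$ that delete the neighbors $x_r$ and $x_{r+1}$ of the inserted copy of $x_i$ give $(k-1)$-trails in which the inserted vertex inherits a new neighbor, so their vanishing requires the edges $\{x_{r-1},x_i\}$ and $\{x_i,x_{r+2}\}$, and these are not among the chords forced by $\partial_{k-1,k}\bar{x}=0$. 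The paper's proof waves these two faces away by saying all other terms ``agree with those of $\bar{x}$,'' which is not literally true; Figure~\ref{fig:vanishing tuple in MH} does draw the required gray edges, but the proof text never justifies their presence.

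However, your sufficiency argument has a genuine gap: the telescoping correction process is only asserted. You do not exhibit the correction $k$-trails, verify that they lie in $G$ (this would require extra edges beyond those forced by $\bar{x}$ being an eulerian cycle), identify a strictly decreasing complexity measure, or check that each correction step eliminates more error terms than it introduces. Similarly, the necessity direction contains an unsupported claim: you assert that the positions $i\in\{0,r-1\}$ ``would contradict $[\bar{x}]\neq 0$,'' and that is precisely what would deliver $r-i\ge 2$, but no argument is given. In short, you have accurately located the difficulty that the published proof elides, yet your proposed resolution is itself only a sketch.
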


\begin{proof}
Suppose $G= (V,E)$ is a graph and $\bar{x}=(x_0,\dots,x_{k-1}) \in ET_{k-1,k}(G)$ such that $[\bar{x}] \in EMH_{k-1,k}(G)$ is non-trivial. So, there exists no $\bar{x}' \in ET_{k,k}(G)$ with $\partial_{k,k}(\bar{X}') = \bar{x}.$ Thus, for $[\bar{x}]$ to be trivial in $MH_{k-1,k}(G),$ there must exist $\bar{x}' \in T_{k,k}(G) \setminus ET_{k,k}(G)$ so that $\partial^r_{k,k}(\bar{x'})= \bar{x}$ for some $r\in[k-1].$ Thus, $\bar{x}' = (x_0, \dots, x_{r-1}, y, x_{r}, \dots, x_{k-1}),$ as pictured in Figure \ref{fig:vanishing tuple in MH}. However, since $\bar{x}' \not\in ET_{k,k}(G),$ we must have $y = x_i$ for some $i \in [k-2].$ 

In this case, $\bar{x}'' = (x_0, \dots, x_{i-1}, x_{i+1}, \dots, x_{r-1}, x_i, x_{r}, \dots, x_{k-1})$\footnote{Supposing $r<i.$ If $i > r$, reverse the appearance order in this sequence.} is also an eulerian $(k-1)$-trail that could appear as a term in $\partial_{k,k}(\bar{x}').$ However, for the same reason as in Lemma \ref{lem:diff_zero_so_edge}, the fact that $\partial_{k-1,k}^i\bar{x} = 0$ implies that $\{x_{i-1}, x_{i+1}\}\in E,$ so $\len(\bar{x}'') = k-1.$ Thus, this term is zero in the differential. All other terms in $\partial_{k,k}(\bar{x}')$ are zero because they agree with those of $\bar{x}$, indeed when computing all other terms in $\partial_{k,k}(\bar{x}')$ we are removing the same vertices we remove when computing $\partial_{k,k}(\bar{x})$. Thus, $[\bar{x}] = 0$ in $MH_{k-1,k}(G)$.
\end{proof}

\begin{figure}[h]
	\begin{minipage}{\linewidth}
		\begin{minipage}{0.45\linewidth}
			\centering
			\begin{tikzpicture}[node distance={15mm}, thick, main/.style = {draw, circle, minimum size=1.1cm}]
				\node[main] (0) {$x_0$}; 
                \node[main] (7) [right of=0] {$x_{i-1}$}; 
				\node[main] (1) [below right of=7, xshift=0.5cm] {$x_i$};  
				\node[main] (2) [below right of=1] {$x_{i+1}$};
				\node[main] (3) [right of=2] {$x_{i+2}$};
                \node[main] (6) [above right of=1] {$x_{r}$};
                \node[main] (9) [right of=6] {$x_{r-1}$};
                \node[main] (5) [below left of=1,xshift=-0.5cm] {$x_{r+1}$};
				\node[main] (8) [left of=5] {$x_{k-1}$};
				
				\draw[dotted] (0) -- (7);
                \draw[dotted] (5) -- (8);
                \draw (1) -- (7);
				\draw (1) -- (2);
    			\draw (1) -- (6);
				\draw (2) -- (3);
                \draw (1) -- (5);
                \draw (6) -- (9);
				\path[every node/.style={font=\sffamily\small}]
				(9) edge [bend right=15, dotted] node {} (3);
				\path[every node/.style={font=\sffamily\small}]
				(2) edge [bend left=35, lightgray] node {} (7);
    		  \path[every node/.style={font=\sffamily\small}]
				(1) edge [bend left=15, lightgray] node {} (3);
    \path[every node/.style={font=\sffamily\small}]
				(1) edge [bend right=15, lightgray] node {} (9);
			\end{tikzpicture} 
		\end{minipage}
		\hspace{.7cm}
		\begin{minipage}{0.45\linewidth}
			\centering
			\begin{tikzpicture}[node distance={15mm}, thick, main/.style = {draw, circle, minimum size=1.1cm}]
                \node[main] (1)  {$x_{i-1}$};
				\node[main] (0) [above left of=1] {$x_0$}; 
				\node[main] (2) [right of=1] {$x_{i}$};
				\node[main] (3) [right of=2] {$x_{i+2}$};
				\node[main] (4) [above of=2] {$x_{i+1}$};
				\node[main] (5) [above right of=3] {$x_{k-1}$};
				
				\draw[dotted] (0) -- (1);
				\draw (1) -- (2);
				\draw (2) -- (3);
				\draw (2) -- (4);
                \draw[dotted] (3) -- (5);
                \draw[lightgray] (1) -- (4);
			\end{tikzpicture} 
		\end{minipage}
	\end{minipage}
    \vspace{.7cm}
	\caption{Relevant subgraphs of minimal graphs in classes which support non-trivial eulerian magnitude homology cycles which become trivial in standard magnitude homology, per the proof of Theorem \ref{thm:lesnotsplit}. Note the asymmetry of those gray edges around $x_i$ which must be present to enforce the vanishing differential for $(x_0, \dots, x_{k-1}) \in EMC_{k-1,k}(G).$}
	\label{fig:vanishing tuple in MH}
\end{figure}
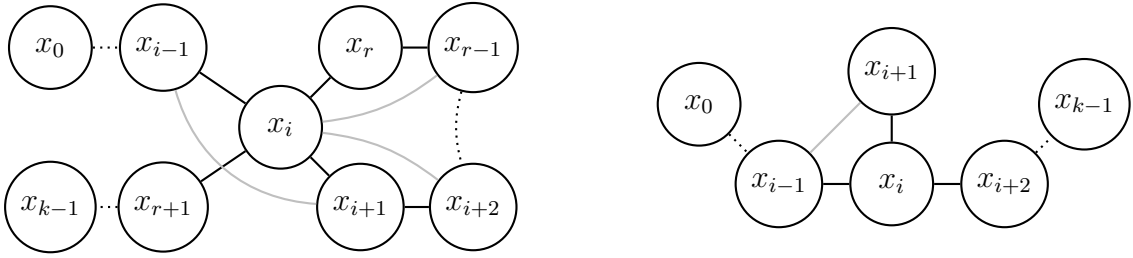

\section{Eulerian magnitude homology of Erd\H{o}s-R\'{e}nyi graphs}
\label{sec:EMH_ER}

The \emph{Erd\H{o}s-R\'{e}nyi (ER) model} for random graphs, $G(n,p),$ introduced in \cite{erdHos1960evolution}, is one of the most widely studied and applied models for  random graphs. As the maximum entropy distribution on graphs with an expected proportion of edges, the ER model serves as a useful null model in a broad range of scientific and engineering applications. For this reason, clique complexes of ER graphs have long been an object of interest in the stochastic topology community \cite{kahle2009topology, kahle2011random, kahle2013limit}. We expect it to serve a similar function as a baseline model for understanding magnitude homology. 

\begin{definition}
	The \emph{Erd\H{o}s-R\'{e}nyi (ER) model} 
 $G(n, p) = (\Omega, P)$ is the probability space where $\Omega$ is the discrete space of all graphs on $n$ vertices, and
 $P$ is the probability measure that assigns to each graph $G \in \Omega$ with $m$ edges probability
	\[
	P(G)=p^m(1-p)^{{n \choose 2}-m}.
	\]
\end{definition}

We can sample an ER graph $G \sim G(n, p)$ on $n$ vertices with parameter $p\in [0,1]$ by determining whether each of the $n \choose 2$ potential edges is present via independent draws from a Bernoulli distribution with probability $p.$ In order to study the limiting behavior of these models as $n \to \infty$, it is often useful to change variables so that $p$ is a function of $n.$ In Section \ref{subsec:van_threshold}, we will take $p=n^{-q}$, $q \in [0,\infty)$, following \cite{kahle2009topology}.
\smallskip 

In what follows, we will study the first diagonal ($k = \ell$) for the eulerian magnitude homology groups of ER graphs as $n \to \infty$. We would like to think of $EMH_{k,k}(G(n, p))$ as a random variable valued in finitely generated abelian groups. As we know from Corollary \ref{cor:homisker}, in this case the groups are free abelian, and counting generators is sufficient to completely understand these groups. Write $\beta_{k,k}(G) = \text{rank}(EMH_{k,k}(G)))$ for the $(k,k)$-EMH Betti number of $G,$ and $\beta_{k,k}(n,p) = \text{rank}(EMH_{k,k}(G(n,p)))$ for the corresponding random variable. In Theorem \ref{thm:vanishingthreshER}, we use the spanning set from Theorem \ref{thm:cycle_decomp} to establish a threshold in terms of $q$ beyond which $\mathbb{E}[\beta_{k,k}(n,p)]$ vanishes, as illustrated in Figure \ref{fig:non trivial emh}. 
Then, in Theorem \ref{thm:asymp size BettiEMH}, we present an explicit formula for the asymptotic size of $\E[\beta_{k,k}(n,p)]$, and in Theorem \ref{thm:central limit thm} we prove a Central Limit Theorem for $\beta_{k,k}(n,p)$.	

\subsection{A vanishing threshold for $EMH_{k,k}(G)$}	
\label{subsec:van_threshold}
Here, we leverage the connection between eulerian magnitude chains along the $k = \ell$ line and the structure of classes $\Gamma(\mathcal{H})$ of graphs that support those chains established in Section \ref{sec:EMH} to study how the expected homology groups for $G(n, n^{-q})$ behave as $n \to \infty.$ By counting subgraphs that can support chains in the kernel of the differential, we are able to establish a $q$ threshold beyond which the limiting $\ker(\partial_{k,k})$ vanishes in expectation, meaning the groups $EMH_{k,k}(G(n, n^{-q}))$ vanish. We will break the argument into two stages. First, we will consider individual basis elements $\bar{x}  \in ET_{k,k}(G) \subseteq EMC_{k,k}(G)$ such that $\partial_{k,k}\bar{x}=0$, such as those we studied in Lemma \ref{lem:cliques bound}. Then, we will demonstrate that in all other cases, the corresponding chains have a higher vanishing threshold. 

First, we require a simple observation, which follows from the fact that edges are drawn independently via Bernoulli random trials.

\begin{lemma}
    \label{lem:prob_class_ER}
    Fix positive $n$ and $p\in [0,1].$ Let $G = (V, E) \sim G(n,p)$ be an ER random graph, and $W \subseteq V$, let $\mathcal{H} = (W, E_S, E_B)$ a class graph. Then the probability that $G\vert_W \in \Gamma(\mathcal{H})$ is $p^{|E_S|}(1-p)^{{|W|\choose 2} - |E_S| - |E_B|}.$
\end{lemma}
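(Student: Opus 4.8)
The plan is to unwind the definition of the class $\Gamma(\mathcal{H})$ and recognize the event $\{G\vert_W \in \Gamma(\mathcal{H})\}$ as a conjunction of independent single-edge events. Recall that $G\vert_W$ is the full subgraph of $G$ on $W$, so its edge set is exactly $E \cap {W \choose 2}$; and by Definition~\ref{def:alpha(G) and omega(G)}, $G\vert_W \in \Gamma(\mathcal{H})$ precisely when $E_S \subseteq E\cap{W \choose 2} \subseteq E_S \cup E_B$. Equivalently, every pair in $E_S$ must be an edge of $G$, every pair in ${W \choose 2}\setminus(E_S\cup E_B)$ must fail to be an edge of $G$, and the pairs in $E_B$ are left unconstrained.

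Next I would partition the ${|W| \choose 2}$ potential edges among the vertices of $W$ into the three disjoint blocks $E_S$, $E_B$, and $R = {W \choose 2}\setminus(E_S\cup E_B)$, of sizes $|E_S|$, $|E_B|$, and ${|W| \choose 2}-|E_S|-|E_B|$ respectively. In the ER model $G(n,p)$ the presence of each potential edge is an independent Bernoulli trial with success probability $p$, and the event above imposes ``present'' on each of the $|E_S|$ pairs of $E_S$, ``absent'' on each of the $|R|$ pairs of $R$, and no condition on the pairs of $E_B$. Since these trials are mutually independent, the probability factors as the product $p^{|E_S|}\cdot 1^{|E_B|}\cdot(1-p)^{|R|} = p^{|E_S|}(1-p)^{{|W| \choose 2}-|E_S|-|E_B|}$, which is the claimed formula.

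There is essentially no obstacle in this argument; the one point worth stating carefully is that the joint event factors over the disjoint blocks of pairs precisely because distinct potential edges correspond to independent coordinates of the product Bernoulli measure underlying $G(n,p)$ (as in the definition of the model), and that leaving a coordinate unconstrained contributes a factor of $1$.
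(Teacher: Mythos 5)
Your proof is correct and is exactly the argument the paper intends: the paper states this lemma without a written proof, noting only that it "follows from the fact that edges are drawn independently via Bernoulli random trials," and your write-up is a careful unwinding of that remark via the partition of ${W \choose 2}$ into $E_S$, $E_B$, and the remainder.
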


Now, we can return to the issue at hand.

\begin{lemma}
	\label{lem:vanishing threshold_single tuple}
    Fix a non-negative integer $k$, and let $q > \frac{k+1}{2k-1}$. Then
	\[\E\left[\left|\left\{\bar{x} \in ET_{k,k}(G(n, n^{-q})) \;\colon\;\partial_{k,k}\bar{x}  = 0\right\}\right|\right] \to 0\] as $n \to \infty.$
\end{lemma}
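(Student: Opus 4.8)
The plan is a first-moment (union-bound) argument. Fix $k \ge 1$ throughout (the case $k=0$ is degenerate, since $ET_{0,0}$ consists of single vertices and the expectation in question equals $n$, so we read the statement with $k \ge 1$). For an arbitrary ordered $(k+1)$-tuple $\bar{x} = (x_0, \dots, x_k)$ of distinct vertices of $[n]$, I would first identify the event $\{\bar{x} \in ET_{k,k}(G),\ \partial_{k,k}\bar{x} = 0\}$ with a subgraph-containment event. By Lemma \ref{lem:class_graph_one_gen}, on this event $G\vert_{L(\bar{x})} \in \Gamma(\mathcal{H}(\{\bar{x}\}))$, so $G$ contains every edge of $\alpha(\mathcal{H}(\{\bar{x}\}))$: the $k$ support edges $\{x_i, x_{i+1}\}_{i\in[k-1]_0}$ and the $k-1$ diagonal edges $\{x_{i-1}, x_{i+1}\}_{i\in[k-1]}$. (Conversely, if $G$ contains all of these, then all consecutive pairs being adjacent forces $\bar{x} \in ET_{k,k}(G)$, and all diagonals present forces $\partial_{k,k}\bar{x} = 0$ by Lemma \ref{lem:diff_zero_so_edge}; so this is in fact an equivalence, though only the forward implication is needed.) The one bookkeeping point to check is that $|E_S| = 2k-1$: since the landmarks are distinct, a support edge has endpoints at index-distance $1$ and a diagonal at index-distance $2$, so the support edges and diagonal edges are pairwise distinct.

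Next I would apply linearity of expectation over all $(k+1)$-tuples of distinct vertices, combined with Lemma \ref{lem:prob_class_ER}. Since $\mathcal{H}(\{\bar{x}\})$ is a \emph{complete} class graph, the $(1-p)$ factor in Lemma \ref{lem:prob_class_ER} has exponent $0$, so the probability of the containment event is exactly $p^{2k-1} = n^{-q(2k-1)}$, uniformly in $\bar{x}$. There are at most $n^{k+1}$ such tuples, whence
\[
\E\!\left[\left|\left\{\bar{x} \in ET_{k,k}(G(n,n^{-q})) \;\colon\;\partial_{k,k}\bar{x} = 0\right\}\right|\right] \;\le\; n^{k+1}\cdot n^{-q(2k-1)} \;=\; n^{\,k+1-q(2k-1)}.
\]
For $k \ge 1$ we have $2k-1 > 0$, so the exponent $k+1-q(2k-1)$ is strictly negative precisely when $q > \frac{k+1}{2k-1}$, and then the right-hand side tends to $0$ as $n \to \infty$, which is the claim.

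There is no serious obstacle here: the argument reduces to the estimate $n^{k+1} p^{2k-1} \to 0$. The only two points requiring care are verifying $|E_S| = 2k-1$ (this is where the \emph{eulerian} hypothesis is used, to rule out a support edge coinciding with a diagonal edge) and observing that completeness of $\mathcal{H}(\{\bar{x}\})$ removes any absent-edge constraint, leaving the clean power $p^{2k-1}$. This lemma is the easy half of the vanishing threshold; it will later be paired with a matching estimate showing that cycles \emph{not} supported on a single trail have an even larger vanishing threshold, to yield Theorem \ref{thm:vanishingthreshER}.
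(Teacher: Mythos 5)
Your proof is correct and follows essentially the same first-moment argument as the paper's: identify the event $\{\partial_{k,k}\bar{x}=0\}$ with containment of the $2k-1$ edges of $\alpha(\mathcal{H}(\{\bar{x}\}))$, invoke Lemma \ref{lem:prob_class_ER}, and conclude by $n^{k+1}p^{2k-1}\to 0$ when $q>\frac{k+1}{2k-1}$. Your version via linearity over ordered $(k+1)$-tuples is in fact a bit cleaner than the paper's phrasing through $\binom{n}{k+1}c(\Delta_{k+1},H_k)p^{2k-1}$ (where $c(\Delta_{k+1},H_k)$, read literally per Section \ref{sec:Background}, would vanish for $k\geq 3$; the intent is just a constant bound on the number of tuples per vertex set), and you are also right that $k=0$ must be excluded, a point the paper leaves implicit.
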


\begin{proof}	
Sample a graph $G \sim G(n, n^{-q})$ and let $\bar{y} = (y_0, \dots, y_k) \in ET_{k,k}(G)$ be a $k$-trail of length $k$ in $G.$ We wish to estimate the probability of the corresponding generator being a cycle in $EMC_{k,k}(G).$ Taking $\mathcal{H}(\{\bar{y}\}) = (L(\bar{y}), E_S, E_B)$ to be the complete class graph from Lemma \ref{lem:class_graph_one_gen}, we see that the set $E_S$ has $2k-1$ elements, so by Lemma \ref{lem:prob_class_ER}, the probability that $G\vert_{\{y_0, \dots, y_k\}} \in \Gamma(\mathcal{H}(\{\bar{y}))$ is $p^{2k-1}.$ 

However, more than one such generator may be supported on $G\vert_{\{y_0, \dots, y_k\}};$ each such generator corresponds to an isomorphic copy of $H_k 
 =\alpha(\mathcal{H}(\{\bar{y}))).$ We can bound above the number of isomorphic copies of $H_k$ that appear on any collection of $(k+1)$ vertices in $G$ by $c(\Delta_{k+1}, H_k)$, which we recall denotes the  number of full subgraphs of $\Delta_{k+1}$ isomorphic to $H_k$. We apply these estimates to provide an upper bound on the expectation of the number of copies of $H_k$ appearing in such a sampled $G$ as $n\to \infty.$
	\begin{align*}
        \E[c(G(n, n^{-q}), H_k)]  &\leq \binom{n}{k+1} c(\Delta_{k+1}, H_k) p^{2k-1} \\
		&\sim  \frac{n^{k+1}}{(k+1)!}c(\Delta_{k+1}, H_k)  n^{q(1-2k)} \\
		&= \frac{c(\Delta_{k+1}, H_k) }{(k+1)!} n^{q(1-2k)+(k+1)} \xrightarrow{n\to \infty}
		\begin{cases}
			0, \text{ if } q > \frac{k+1}{2k-1} \\
			\infty, \text{ if } 0 < q < \frac{k+1}{2k-1}.
		\end{cases}
	\end{align*}
	Thus, we conclude that no such generators are expected to exist when $q > \frac{k+1}{2k-1}$ as $n\to \infty.$
\end{proof}

Note that the situation described in Lemma \ref{lem:vanishing threshold_single tuple}, where a single tuple $\bar{x}$ generates an $EMH$-cycle, corresponds to a structure graph $s(\{\bar{x}\})$ consisting of a single vertex. As we will see in the following lemma, the limiting probability of observing cycles minimally supported on families of trails with more complex structure graphs goes to zero in a larger $q$ range than this singleton case.

\begin{lemma}
	\label{lem:vanishing threshold_combination tuples}
Fix non-negative integers $k, n$, and fix $m \geq 2$. Let $q > \frac{k+1}{2k-1}$. Let $X = \{\bar{x}^i\}_{i \in [m]} \subseteq ET_{k,k}(\Delta_n)$ be a local, compatible collection of trails so that $s(X) \cong C_d$ for some even $d$ or $s(X)$ is a clique-tree. Then, as $n\to \infty$, $$\mathbb{E}[c(G(n, n^{-q}), \alpha(\mathcal{H}(X)))] \to 0.$$ 
\end{lemma}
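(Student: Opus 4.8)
The plan is to run the same kind of first-moment (expectation) computation as in Lemma \ref{lem:vanishing threshold_single tuple}, but now for the minimal graph $\alpha(\mathcal{H}(X))$ attached to a local, compatible family $X$ with $s(X)$ either an even cycle or a clique-tree. First I would set up the counting bound: sampling $G \sim G(n, n^{-q})$, the expected number of full subgraphs of $G$ isomorphic to $H := \alpha(\mathcal{H}(X))$ is at most $\binom{n}{|W^X|} \cdot c(\Delta_{|W^X|}, H) \cdot p^{|E_S^X|}$ by Lemma \ref{lem:prob_class_ER} (applied with $E_B = \varnothing$, since we are looking at the minimal graph), where $p = n^{-q}$. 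Since $c(\Delta_{|W^X|}, H)$ is a constant depending only on $k$ and $m$, this expectation behaves asymptotically like a constant times $n^{|W^X| - q|E_S^X|}$, which tends to $0$ precisely when $q > |W^X| / |E_S^X|$. So the whole lemma reduces to the purely combinatorial claim: for every local, compatible $X$ with $s(X) \cong C_d$ ($d$ even) or $s(X)$ a clique-tree, one has
\[
\frac{|W^X|}{|E_S^X|} \;\le\; \frac{k+1}{2k-1}.
\]

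The key step, then, is to bound $|W^X|$ above and $|E_S^X|$ below in terms of $k$, $m$, and the combinatorial type of $s(X)$. I would argue as follows. Start from a single trail $\bar{x}^1 \in X$: it contributes $k+1$ vertices to $W^X$ and, via $E_\text{supp}$ together with $E_\text{diff}$, at least $2k-1$ edges to $E_S^X$ — exactly the count from the single-tuple case (note $E_\text{diff}$ may lose one or two edges to $E_\text{rem}$ per maximal clique, but the $E_\text{supp}$ edges are safe since compatibility gives $E_\text{supp} \cap E_\text{rem} = \varnothing$). Now bring in the remaining trails one at a time along the structure graph $s(X)$: each new trail $\bar{x}^j$ differs from a previously included one in exactly one landmark, say position $r$, so it adds at most one new vertex $x_r^j$ to $W^X$, while it adds to $E_S^X$ the two new support edges $\{x_{r-1}, x_r^j\}$ and $\{x_r^j, x_{r+1}\}$ plus (generically) further $E_\text{diff}$ edges incident to $x_r^j$. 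Thus each additional trail contributes a "vertex-to-edge ratio" of at most $1/2$ to the increments, and since $1/2 < (k+1)/(2k-1)$ (as $2(k+1) = 2k+2 > 2k-1$), adding trails only improves — i.e. decreases — the ratio $|W^X|/|E_S^X|$ relative to the single-trail value $(k+1)/(2k-1)$. Composing these increments gives the desired inequality, hence $q > (k+1)/(2k-1)$ forces the expectation to vanish.

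I would then plug the inequality back into the chain of asymptotics,
\[
\E\big[c(G(n,n^{-q}), H)\big] \;\le\; \binom{n}{|W^X|} c(\Delta_{|W^X|}, H)\, n^{-q|E_S^X|} \;\sim\; \frac{c(\Delta_{|W^X|}, H)}{|W^X|!}\, n^{|W^X| - q|E_S^X|},
\]
and since $|W^X| - q|E_S^X| = |E_S^X|\big(|W^X|/|E_S^X| - q\big) < |E_S^X|\big((k+1)/(2k-1) - q\big) < 0$, the right-hand side tends to $0$ as $n \to \infty$, completing the proof.

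The main obstacle I expect is the bookkeeping in the second step: making the "each new trail adds $\le 1$ vertex and $\ge 2$ edges to $E_S^X$" argument fully rigorous requires care about (a) whether a newly introduced vertex $x_r^j$ might actually coincide with an existing landmark of another trail — here locality and the structure-graph definition should rule this out at the positions that matter — and (b) the deletions $E_\text{diff} \cap E_\text{rem}$, which could in principle erode the edge count; one must check these deletions are controlled (at most a bounded number per maximal clique, and never touching $E_\text{supp}$ by compatibility) so that the $\ge 2$-edges-per-new-trail lower bound survives. The even-cycle and clique-tree hypotheses on $s(X)$ are what keep the number of maximal cliques — and hence the number of $E_\text{rem}$ deletions — linearly controlled, which is exactly why the lemma is stated for those two families rather than arbitrary compatible $X$.
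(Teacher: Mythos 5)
Your overall strategy matches the paper's: reduce, via the first-moment bound from Lemma \ref{lem:prob_class_ER}, to showing that the ratio $|W^X|/|E_S^X|$ does not exceed $(k+1)/(2k-1)$. But the increment argument you propose to prove this inequality has a gap in the base case. You assert that $\bar{x}^1$ contributes ``at least $2k-1$ edges to $E_S^X$ --- exactly the count from the single-tuple case,'' while simultaneously conceding in the parenthetical that $E_\text{diff}$ may lose edges to $E_\text{rem}$. These two statements are inconsistent: once $|X| \geq 2$ there is at least one maximal clique in $s(X)$, so $E_\text{rem}$ is non-empty, and the removed edges are precisely diagonals $\{x_{b(t)-1},x_{b(t)+1}\}$ lying in $\bar{x}^1$'s own $E_\text{diff}$. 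In the even-cycle case $|E_\text{rem}| = 2$ and both removed edges come from $\bar{x}^1$'s diagonals, so the first trail's actual contribution to $E_S^X$ is only $2k-3$, giving a base ratio $(k+1)/(2k-3) > (k+1)/(2k-1)$. Your mediant argument then no longer closes with the claimed increment ratio of $\leq 1/2$: for $|X|=4$ one gets $(k+3)/(2k+1)$, and $(k+3)(2k-1) - (k+1)(2k+1) = 2k - 4 > 0$ for $k \geq 3$, so the mediant exceeds the target.

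The paper avoids this by doing the accounting globally rather than incrementally. It computes $|W^X| = k + |X| - 1$ and lower-bounds $|E_S^X| \geq 2k + 2|X| - 1$ using two structural facts your sketch does not invoke: in the even-cycle case, $|E_\text{rem}|$ is exactly $2$ (not one per maximal clique) because all trails share the landmarks $x_{t\pm 1}, x_{t'\pm 1}$ at the two positions that vary; and in the clique-tree case, the number $y$ of new vertices dominates the number $r$ of maximal cliques, so the $E_\text{rem}$ deletions are absorbed by the extra $E_\text{diff}$ edges the new vertices introduce. It then checks directly that $\frac{k+|X|}{2k+2|X|-1} < \frac{k+1}{2k-1}$. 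To repair your increment argument, you would need to count the $E_\text{diff}$ edges each new trail actually contributes (bringing the per-trail increment ratio closer to $1/4$, not merely $\leq 1/2$), so that the increments compensate for the $E_\text{rem}$ erosion of the base --- which is exactly the bookkeeping you flag as unresolved at the end of your write-up.
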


\begin{proof}
Consider first the case where $s(X) \cong C_d,$
which will be similar to those depicted in Figures \ref{fig:square minimally supported on Y} or \ref{fig:even cycle minimally supported on Y}, with $|X| \geq 4.$
Taking $t < t'$ and $\bar{x}^{j_0} = (x^{j_0}_0, \dots x^{j_0}_t, \dots , x^{j_0}_{t'}, \dots , x^{j_0}_k),$ as in the proof of Theorem \ref{thm:cycle_decomp}, and following the construction of the cycle therein, we see that all but two of the elements of $X$ introduce exactly one new vertex to $\mathcal{H}(X).$ The other is the initial trail, consisting of $k+1$ new vertices, and the last does not introduce anything new. So, by construction $\mathcal{H}(X)$  must have precisely $k + |X| - 1$ vertices. Further, following  Definition \ref{def:fullclassgraph}, $E_{\text{supp}}$ will consist of exactly the requisite $(k + 1) + 2(|X| - 2) + (|X| -3) = k+3|X|-6$ edges implicated in these trails, where the last $(|X|-3)$ edges connect the $(|X|-1)$ newly added vertices. $E_\text{diff}$ can be decomposed as follows. Start with the $(k-1)$ edges for the first trail $\bar{x}^1$. Iteratively, select a new maximal clique in $s(X)$ containing $\bar{x}^1$, and observe that each trail in that clique differs from $\bar{x}^1$ in precisely one vertex, and thus there are at most two edges including that vertex that must be added to $E_\text{diff}$ to ensure the corresponding differential terms for those trails vanish. Iterating through cliques in this way, we see that each other trail in $X$ adds between one and two edges, and so $(k-1) + |X| - 1 \leq E_\text{diff} \leq (k-1) + 2(|X| - 1)$, depending on the values of $t$ and $t'.$ Finally,  $E_{\text{rem}}$ will contain precisely the edges $\{x^{j_0}_{t-1}, x^{j_0}_{t+1}\}$ and $\{x^{j_0}_{t'-1}, x^{j_0}_{t'+1}\}.$ Thus, as $|E_\text{diff} \cap E_{\text{rem}}| \leq 2$, $|E_S^{X}| \geq (k+3|X|-6) + (k-1) + |X|-2 \geq 2k + 2|X| - 1$ for every $|X| \geq 4$. Taking $W$ to be any collection of $k + |X| - 1$ vertices in $G \sim G(n, p)$, Lemma \ref{lem:prob_class_ER} then says that the probability that $G|_{W} \in \Gamma(\mathcal{H}(X))$ is bounded above by $p^{2k+2|X|-1}$.

Now, we can apply the same reasoning as in the proof of Lemma \ref{lem:vanishing threshold_single tuple} to count subgraphs of $G$ isomorphic to $H_k \cong \alpha(\mathcal{H}(X)).$ 

	\begin{align*}
        \E[c(G(n, n^{-q}), H_k)]  &\leq \binom{n}{k+|X|} c(\Delta_{k+|X|}, H_k) p^{2k+2|X|-1} \\
		&\sim  \frac{n^{k+|X|}}{(k+|X|)!}c(\Delta_{k+|X|}, H_k)  n^{q(1-2|X|-2k)} \\
		&= \frac{c(\Delta_{k+2|X|}, H_k) }{(k+|X|)!} n^{q(1-2|X|-2k)+(k+|X|)} \\
        &\xrightarrow{n\to \infty}
		\begin{cases}
			0, \text{ if } q > \frac{k+|X|}{2k+2|X|-1} \\
			\infty, \text{ if } 0 < q < \frac{k+|X|}{2k+2|X|-1}.
		\end{cases}
	\end{align*}

 In particular, as $  \frac{k+1}{2k-1} > \frac{k+|X|}{2k+2|X|-1}$, we conclude that no local compatible collection of trails with cyclic structure graph is expected to be supported in $G$ as $n\to \infty$ when $q > \frac{k+1}{2k-1}.$ 

The case where $s(X)$ a clique-tree, as in Figure \ref{fig:clique tree}, is similar. Each vertex in $s(X)$ beyond the first adds at most one vertex to $\mathcal{H}(X).$ Suppose $y$ such vertices are added, so $\mathcal{H}(X)$ has $k + y + 1$ vertices. For each such new vertex there are two edges added to $E_\text{supp}.$ The set $E_\text{rem}$ contains no more edges than the number of maximal cliques, $r$. And, $E_\text{diff}$ contains at least $((k-1) + y)$ edges. Then, since $y \geq r,$ $|E_S^{X}| \geq (k + 2y) + (k-1)  + y - r \geq 2k + 2y - 1.$ So, again taking $W$ to be a subset of the vertices of $G \sim G(n,p)$ of the appropriate size and invoking Lemma \ref{lem:prob_class_ER}, we have the probability that $G|_{W} \in \Gamma(\mathcal{H}(X))$ is bounded above by $p^{2k+2y-1}$. We now apply an identical counting argument to conclude the  required vanishing bound for collections of trails with clique-tree structure graphs.

\end{proof}
  
\noindent Now, by Theorem \ref{thm:cycle_decomp}, any cycle in $EMH_{k,k}(G)$ is either minimally supported on a singleton $\{\bar{x}^1\}$ or can be decomposed into cycles minimally supported on collections with structure graphs given by clique-trees and cycles. Thus, Lemma \ref{lem:vanishing threshold_single tuple} and Lemma \ref{lem:vanishing threshold_combination tuples} together provide a vanishing threshold for eulerian magnitude homology on the $k=\ell$ line.

\begin{theorem}
\label{thm:vanishingthreshER}
	Fix $k$ and $q > \frac{k+1}{2k-1}.$ 
	As $n \to \infty,$ \[\E\left[\beta_{k,k}(n, n^{-q})\right] \to 0.\]
 
\end{theorem}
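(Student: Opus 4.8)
The plan is to bound $\beta_{k,k}(G)=\rank\bigl(EMH_{k,k}(G)\bigr)=\rank\bigl(\ker\partial_{k,k}\bigr)$ (the last equality by Corollary \ref{cor:homisker}) above by the size of the spanning set of Corollary \ref{cor:cycle_spanning_set}, and then to pass that bound through the expectation by a first--moment argument in which Lemmas \ref{lem:vanishing threshold_single tuple} and \ref{lem:vanishing threshold_combination tuples} annihilate the two resulting families of terms. Concretely: $\ker\partial_{k,k}$ is spanned by cycles $\gamma$ minimally supported on local, compatible collections $X_\gamma\subseteq ET_{k,k}(G)$ whose structure graph $s(X_\gamma)$ is a clique-tree or an even cycle $C_d$. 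For a fixed collection $X$, the cycles minimally supported on $X$ span a subspace of the $|X|$--dimensional space of chains on $X$, so at most $|X|$ of them are needed in a spanning set; moreover, by the constructions of Section \ref{sec:EMH}, such a $\gamma$ can exist in $EMC_{k,k}(G)$ only when $G$ contains a copy of $\alpha(\mathcal{H}(X))$. Splitting by $|X_\gamma|=1$ versus $|X_\gamma|\ge 2$ and using linearity of expectation, I would obtain
\[
\E\bigl[\beta_{k,k}(n,n^{-q})\bigr]\ \le\ \E\Bigl[\bigl|\{\bar x\in ET_{k,k}(G)\,:\,\partial_{k,k}\bar x=0\}\bigr|\Bigr]\ +\ \sum_{m\ge 2}\ \sum_{\substack{X\ \text{local, compatible}\\ |X|=m,\ s(X)\ \text{clique-tree or }C_m}} m\cdot\E\bigl[c\bigl(G,\alpha(\mathcal{H}(X))\bigr)\bigr].
\]

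The first term tends to $0$ by Lemma \ref{lem:vanishing threshold_single tuple}, exactly because $q>\frac{k+1}{2k-1}$. For the double sum, Lemma \ref{lem:vanishing threshold_combination tuples} already gives that each inner expectation tends to $0$ under the same hypothesis, so what remains is to show the sum over the (for fixed $n$, finitely but unboundedly many) collections $X$ does not accumulate. Two ingredients do this. First, the proof of Lemma \ref{lem:vanishing threshold_combination tuples} supplies a rate: for $|X|=m$ the inner expectation is $O\bigl(n^{(k+m)-q(2k+2m-3)}\bigr)$ (using that $\alpha(\mathcal{H}(X))$ has at most $k+m$ vertices and at least $2k+2m-3$ forced edges, together with Lemma \ref{lem:prob_class_ER}), and since $q>\tfrac12$ this exponent is negative and strictly decreasing in $m$, so the terms decay geometrically in $n^{1-2q}$. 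Second, the number of isomorphism types of admissible $X$ with $|X|=m$ is at most exponential in $m$: an even cycle $s(X)\cong C_m$ is determined up to $O(k^2)$ choices of its two moving landmark positions, while a clique-tree on $m$ nodes together with the choice of a moving landmark for each of its maximal cliques numbers at most $C(k)^m$ for some constant $C(k)$. Multiplying and summing a geometric series, the double sum is $O\bigl(n^{(k+2)-q(2k+1)}\bigr)$, the dominant contribution coming from the case $|X|=2$ (where $s(X)$ is a single edge); this tends to $0$ because $q>\frac{k+1}{2k-1}>\frac{k+2}{2k+1}$. Adding the two vanishing contributions gives $\E[\beta_{k,k}(n,n^{-q})]\to 0$.

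The main obstacle is precisely this control of the sum over collections $X$ of unbounded size: a careless union bound would range over infinitely many terms in the limit, so one genuinely needs the uniform-in-$|X|$ improvement of the vanishing threshold provided by Lemma \ref{lem:vanishing threshold_combination tuples} — namely that $\frac{k+|X|}{2k+2|X|-1}$ lies below $\frac{k+1}{2k-1}$ and is decreasing in $|X|$ — paired with the fact that the admissible structure graphs (clique-trees and even cycles), being tree-like, number only exponentially, not super-exponentially, in $m$. If one prefers to avoid the explicit count of structure-graph types, an equivalent route is to fix a large constant $M$, clear the range $2\le m\le M$ by finitely many applications of Lemma \ref{lem:vanishing threshold_combination tuples}, and bound the tail $m>M$ crudely by the same geometric estimate. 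Everything else — the passage to ranks via Corollary \ref{cor:homisker}, linearity of expectation, and the elementary comparisons of $\frac{k+1}{2k-1}$ with the various $\frac{k+|X|}{2k+2|X|-1}$ — is routine.
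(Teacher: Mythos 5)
Your proposal uses exactly the paper's machinery — Corollary \ref{cor:homisker} to reduce to $\ker\partial_{k,k}$, the spanning set from Corollary \ref{cor:cycle_spanning_set}, and the two vanishing lemmas — but you explicitly address a step that the paper's two-sentence proof passes over: the union bound over the (unboundedly many, as $n\to\infty$) local compatible collections $X$ of size $|X|\ge 2$. The paper invokes Lemma \ref{lem:vanishing threshold_single tuple} for singletons and Lemma \ref{lem:vanishing threshold_combination tuples} for each fixed $X$ and concludes, but Lemma \ref{lem:vanishing threshold_combination tuples} is a pointwise-in-$X$ statement, and a priori the sum of these vanishing expectations over all admissible $X$ could fail to vanish. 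Your observation that the exponent in Lemma \ref{lem:vanishing threshold_combination tuples} is strictly decreasing in $|X|$ at rate $1-2q<0$ (since $q>\frac{k+1}{2k-1}>\frac{1}{2}$), together with the fact that the dominant contribution comes from $|X|=2$ where the threshold $\frac{k+2}{2k+1}<\frac{k+1}{2k-1}$, is precisely the right mechanism; your parenthetical remark that one can alternatively truncate at a large $M$ is also correct. This is a genuine improvement in rigor.

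One caveat worth flagging. Your estimate that the number of isomorphism types of admissible $X$ with $|X|=m$ is at most $C(k)^m$ is asserted rather than proved, and it is not entirely obvious: a maximal clique of $s(X)$ can be arbitrarily large (many trails differing at a single position $r$, as in a fan subgraph), so the contracted clique-tree can have freely-chosen clique sizes summing to $m$, and the naive count of clique-tree shapes is not immediately exponential in $m$. For the argument to close, you either need a sharper accounting (e.g., observing that a clique of size $s$ at a fixed position contributes $s-1$ new vertices and $2(s-1)$ new forced edges, so the per-new-vertex edge cost stays at least $2$, keeping the geometric decay intact regardless of how vertices are distributed among positions), or the truncation route you mention, together with a crude bound on the number of vertex subsets of $G$ of a given size that could host such a subgraph. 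The latter is unproblematic because the exponent in $n$ becomes arbitrarily negative as $|W^X|$ grows, dominating any count that is at most $|W^X|^{O(|W^X|)}\le n^{O(|W^X|)}$ only when $q$ exceeds a slightly larger threshold; so the cleanest fully rigorous version is really the per-new-vertex accounting. The underlying idea and conclusion are correct either way, and your write-up is more careful on this point than the paper itself.
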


Figure \ref{fig:non trivial emh} illustrates the $q$ range where the first diagonal of the eulerian magnitude homology of an Erd\H{o}s-R\'{e}nyi random graph $G(n,n^{-q})$ is expected to vanish as $n\to \infty.$ 

\begin{figure}[H]
	\centering
	\includegraphics[scale=0.65]{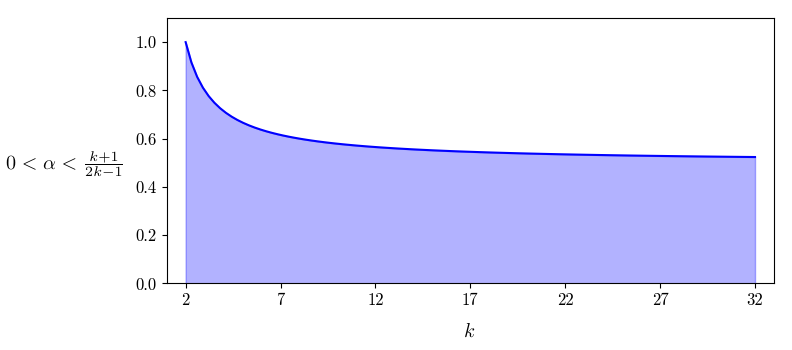}
	\caption{The shaded region below the curve is the $q$ vs $k$ region for which we can have  non-vanishing $EMH_{k,k}(G)$ in expectation as $n \to \infty$ for graphs $G\sim G(n,n^{-q}).$ By Theorems \ref{thm:relation MH EMH DMH} and \ref{thm:iso MH DMH}, in the non-shaded region asymptotically almost surely $MH_{k,k}(G) \leq DMH_{k,k}(G)$, and if also $k \geq 5,$ then $MH_{k,k}(G) \cong DMH_{k,k}(G)$.}
	\label{fig:non trivial emh}
\end{figure}

Combining the vanishing threshold in Theorem \ref{thm:vanishingthreshER} with the behavior of $MH_{k,k}(G)$ when eulerian magnitude homology vanishes as in Theorem \ref{thm:relation MH EMH DMH}, we obtain the following characterization of the expected behavior of $MH_{k,k}(G(n,p))$ as $n \to \infty.$ 

\begin{cor}
Let $G=G(n,n^{-q})$ be an Erd\H{o}s-R\'{e}nyi random graph.
If $q > \frac{k+1}{2k-1}$ then the magnitude homology group $MH_{k,k}(G)$ is the subgroup of the discriminant magnitude homology group $DMH_{k,k}(G)$ generated by tuples of the form $(x_0,x_1,x_0,\dots)$ for which the induced path only revisits the same edge $(x_0,x_1)$.
\end{cor}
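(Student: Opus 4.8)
The plan is to read the corollary off from two results already in hand: the vanishing threshold of Theorem~\ref{thm:vanishingthreshER} and the structural description of $MH_{k,k}(G)$ in Theorem~\ref{thm:relation MH EMH DMH}, glued together by the long exact sequence~(\ref{eq:LES}). Essentially all of the mathematical content lives in those earlier statements; the only genuinely new work is upgrading the conclusions about expectations to asymptotically almost sure conclusions, and then checking that the hypotheses of Theorem~\ref{thm:relation MH EMH DMH} are met a.a.s.\ in the stated $q$-range.

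First I would pass from first moments to a.a.s.\ vanishing. By Corollary~\ref{cor:homisker} each $EMH_{n,n}(G)$ is free abelian, so $\beta_{n,n}(n,n^{-q})$ is a non-negative integer-valued random variable, and Markov's inequality gives $\Prob\!\left[\beta_{n,n}(n,n^{-q})\geq 1\right]\leq\E\!\left[\beta_{n,n}(n,n^{-q})\right]$. By Theorem~\ref{thm:vanishingthreshER}, applied with its fixed index equal to $n$, the right-hand side tends to $0$ as $n\to\infty$ for each $n$ with $q>\frac{n+1}{2n-1}$; since a finite intersection of asymptotically almost sure events is again asymptotically almost sure, this yields, a.a.s., $EMH_{n,n}(G)\cong\langle 0\rangle$ for all $n$ in the relevant range simultaneously. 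On this event Theorem~\ref{thm:relation MH EMH DMH} applies and tells us that every generator of $MH_{k,k}(G)\cong\ker(\partial_{k,k})$ is a $k$-trail of the form $(x_0,x_1,x_0,x_1,\dots)$, i.e.\ one whose underlying minimal walk only ever traverses the single edge $\{x_0,x_1\}$ --- which is exactly the proposed generating set.

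It remains to place $MH_{k,k}(G)$ inside $DMH_{k,k}(G)$. Here I would invoke the observation, recorded just before the statement, that $DMH_{k+1,k}(G)=0$ (via Lemma~\ref{lem:LowerTriangular}), so that~(\ref{eq:LES}) reads $0\to EMH_{k,k}(G)\xrightarrow{\iota_*}MH_{k,k}(G)\xrightarrow{\pi_*}DMH_{k,k}(G)\to\cdots$; hence $EMH_{k,k}(G)\cong\langle 0\rangle$ forces $\ker(\pi_*)=\im(\iota_*)=0$, so $\pi_*$ is injective. Since $k\geq 2$ the back-and-forth trails are not eulerian, hence each is a genuine generator of $DMC_{k,k}(G)$ and, $\pi$ being a chain map, maps to a cycle in $DMH_{k,k}(G)$; so $\pi_*$ carries $MH_{k,k}(G)$ isomorphically onto the subgroup of $DMH_{k,k}(G)$ generated by the classes of those trails, as claimed. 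The step I expect to be delicate is the first one: this argument needs a.a.s.\ vanishing of $EMH_{n,n}(G)$ for \emph{every} $n\leq k$, and the threshold $\frac{n+1}{2n-1}$ is largest at $n=2$, where the binding constraint involves triangles and small fan subgraphs; getting the conclusion in the stated $q$-range rather than the a priori more restrictive $q>1$ would require re-running the first-moment estimates of Lemmas~\ref{lem:vanishing threshold_single tuple} and~\ref{lem:vanishing threshold_combination tuples} directly against the witness subgraphs produced in the proof of Theorem~\ref{thm:relation MH EMH DMH}, and verifying that those thresholds are all dominated by $\frac{k+1}{2k-1}$ is the main obstacle one would face.
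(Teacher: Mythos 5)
Your proposal traces exactly the route the paper indicates for this corollary: combine Theorem~\ref{thm:vanishingthreshER} (vanishing of $\E[\beta_{k,k}]$) with Theorem~\ref{thm:relation MH EMH DMH} (structure of $MH_{k,k}$ when the lower eulerian groups vanish), upgrade the first-moment statement to an asymptotically-almost-sure one via Markov's inequality and Corollary~\ref{cor:homisker}, and read off the embedding into $DMH_{k,k}(G)$ from the long exact sequence~(\ref{eq:LES}) using $DMH_{k+1,k}(G)=0$. All of that is sound, and your handling of the LES step is correct.

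The gap you flag at the end, however, is not a bookkeeping nuisance but a genuine obstruction, and you are a little too optimistic that ``re-running the first-moment estimates'' could close it. Theorem~\ref{thm:relation MH EMH DMH} requires $EMH_{n,n}(G)\cong\langle 0\rangle$ for \emph{every} $2\le n\le k$; its proof repeatedly reduces to $3$-trails that would give nontrivial classes in $EMH_{2,2}(G)$, so the $n=2$ case cannot be dropped. The vanishing threshold of Theorem~\ref{thm:vanishingthreshER} for index $n$ is $q>\frac{n+1}{2n-1}$, and since $\frac{d}{dn}\frac{n+1}{2n-1}=\frac{-3}{(2n-1)^2}<0$ this is strictly decreasing; the binding constraint is $n=2$, where it reads $q>1$. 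Hence for $k\ge 3$ and $\frac{k+1}{2k-1}<q<1$, one has $EMH_{k,k}(G)$ vanishing a.a.s.\ but $\E[\beta_{2,2}]\to\infty$, so the hypotheses of Theorem~\ref{thm:relation MH EMH DMH} are not met in the stated $q$-range. No sharper witness-subgraph analysis repairs this: vanishing of $EMH_{k,k}$ says nothing about $EMH_{2,2}$, and the two thresholds genuinely disagree. What \emph{does} follow from $q>\frac{k+1}{2k-1}$ alone is the weaker statement that $MH_{k,k}(G)$ injects into $DMH_{k,k}(G)$, via precisely the LES argument you give; the characterization of the generating set as ``back-and-forth'' trails requires the stronger hypothesis $q>1$ (equivalently $p=o(n^{-1})$). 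The corollary as printed is valid only for $k=2$, where $\frac{k+1}{2k-1}=1$; for $k\ge 3$ its hypothesis should read $q>1$. This is an imprecision in the paper itself rather than in your write-up, but you should state the gap as a fact rather than as an obstacle that might be overcome by sharper counting.
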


\subsection{Asymptotic behavior of $\beta_{k,k}(G)$ for ER random graphs}

Given the prominence of cliques in our computations in Section \ref{subsec:van_threshold}, it may come as little surprise that the pioneering work of Kahle and Meckes on the limiting behavior of Betti numbers in random clique complexes would have application in this context. Following \cite[Theorem 2.3]{kahle2013limit} and \cite[Theorem 1.1]{kahle2015erratum}, in this section we compute expectations of these values for Erd\H{o}s-R\'{e}nyi random graphs as $n\to\infty,$ see Figure \ref{fig:expected betti ER} Because we rely on many of the same combinatorial structures, in the name of brevity we will refer the reader to computations in these papers when the details are identical.

 \begin{figure}[H]
	\centering
	\includegraphics[scale=0.65]{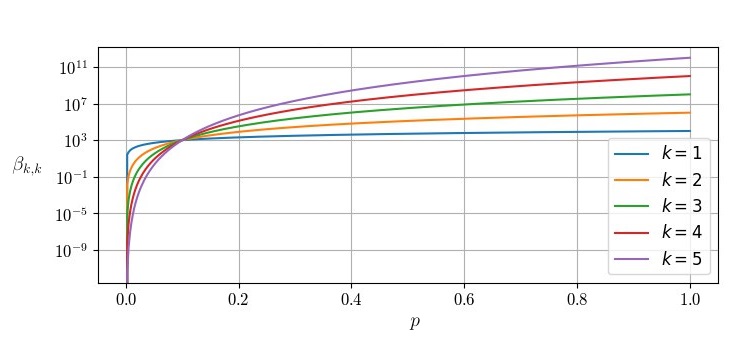}
	\caption{The expected value for the Betti numbers $\beta_{k,k}$, $k \in [1,5]$, of an Erd\H{o}s-R\'{e}nyi random graph is plotted vertically against the probability $p$. In this example $n = 100$ and the $y$-axis is on a base 10 logarithmic scale to better visualize the large differences in values. Notice that the the curves all intersect at roughly $p=0.1=n^{-\alpha}$ with $n=100$ and $\alpha=\frac{1}{2} \sim \frac{k+1}{2k-1}$.}
	\label{fig:expected betti ER}
\end{figure}

\begin{theorem}
	\label{thm:asymp size BettiEMH}
	Fix $k$ nonnegative and write $\beta_{k,k} = \rank(EMH_{k,k}(G(n,p)))$. 
 
 If $p = 1-O\left( \frac{\ln(n)}{n} \right)$ then 
	\[
	\lim\limits_{n\to \infty} \frac{\mathbb{E}[\beta_{k,k}]}{n^{k+1}p^{2k-1}} = 1.
	\]
\end{theorem}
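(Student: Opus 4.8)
The plan is to squeeze $\beta_{k,k}(n,p)$ between two quantities whose expectations can be written down exactly, exploiting that the $(k,k)$ spot sits at the top of a truncated complex. By Lemma~\ref{lem:LowerTriangular} we have $EMC_{k+1,k}(G)\cong 0$, so $EMH_{k,k}(G)=\ker(\partial_{k,k})$ exactly as in Corollary~\ref{cor:homisker}. Since $EMC_{k,k}(G)$ and $EMC_{k-1,k}(G)$ are free abelian of ranks $|ET_{k,k}(G)|$ and $|ET_{k-1,k}(G)|$, the identity $\rank(\ker\partial_{k,k})=|ET_{k,k}(G)|-\rank(\partial_{k,k})$ together with $0\le\rank(\partial_{k,k})\le|ET_{k-1,k}(G)|$ gives, for every graph $G$,
\[
|ET_{k,k}(G)|-|ET_{k-1,k}(G)|\ \le\ \beta_{k,k}(G)\ \le\ |ET_{k,k}(G)|.
\]
Taking expectations over $G\sim G(n,p)$, the problem reduces to estimating $\E[|ET_{k,k}(G)|]$ and $\E[|ET_{k-1,k}(G)|]$.

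\textbf{The two counts.}
First I would observe that an ordered tuple of $k+1$ distinct vertices lies in $ET_{k,k}(G)$ precisely when all $k$ consecutive pairs are edges of $G$: distinct landmarks force the length to be at least $k$, with equality iff every consecutive distance equals $1$. There are $(n)_{k+1}:=n(n-1)\cdots(n-k)$ such tuples and the $k$ edge slots are pairwise distinct, so $\E[|ET_{k,k}(G)|]=(n)_{k+1}\,p^{k}$. Next, an eulerian $(k-1)$-trail of length $k$ has $k$ distinct landmarks and $k-1$ consecutive distances, each at least $1$ and summing to $k$, hence exactly one of them equals $2$ and the rest equal $1$; dropping the (only helpful) requirement that the distance-$2$ pair have a common neighbour, the probability that a fixed ordered $k$-tuple is such a trail is at most $(k-1)\,p^{k-2}(1-p)$, so $\E[|ET_{k-1,k}(G)|]\le (k-1)(n)_{k}\,p^{k-2}(1-p)=O(n^{k-1}\ln n)$ under the hypothesis on $p$.

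\textbf{The limit.}
Finally I would divide the sandwich through by $n^{k+1}p^{2k-1}$ and let $n\to\infty$. The hypothesis $p=1-O(\ln n/n)$ forces $p\to 1$, so $(n)_{k+1}/n^{k+1}\to 1$ and $p^{k}/p^{2k-1}=p^{1-k}\to 1$, whence the upper bound contributes $(n)_{k+1}p^{k}/(n^{k+1}p^{2k-1})\to 1$; and the correction in the lower bound satisfies
\[
\frac{(k-1)(n)_{k}p^{k-2}(1-p)}{n^{k+1}p^{2k-1}}\ \le\ \frac{(k-1)(1-p)}{n}\,p^{-(k+1)}\ =\ O\!\left(\frac{\ln n}{n^{2}}\right)\ \longrightarrow\ 0 ,
\]
so the lower bound also tends to $1$. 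The squeeze theorem then yields $\E[\beta_{k,k}]/(n^{k+1}p^{2k-1})\to 1$.

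\textbf{Where the work is.}
There is no deep obstacle here; the crux is the structural observation that at the end of the (truncated) eulerian chain complex $\beta_{k,k}$ is trapped between $|ET_{k,k}|-|ET_{k-1,k}|$ and $|ET_{k,k}|$, after which everything is elementary counting. The two points needing a little care are verifying that a length-$k$ eulerian $(k-1)$-trail has exactly one step of length $2$ (this is what produces the $(1-p)$ factor and the sub-leading order $O(n^{k-1}\ln n)$), and checking that the density hypothesis is exactly what drives both $p^{1-k}\to 1$ and $(1-p)/n\to 0$; in fact only the former, i.e.\ $p\to 1$, is genuinely required, the hypothesis $p=1-O(\ln n/n)$ being comfortably sufficient.
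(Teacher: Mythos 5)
Your proof is correct, and it takes a genuinely different route from the paper. The paper derives its sandwich from the structure-graph/subgraph-counting framework built up in Section~4.1: the lower bound counts eulerian $k$-trails $\bar{x}$ with $\partial_{k,k}\bar{x}=0$ (each such trail requires $2k-1$ edges, giving expected count $\binom{n}{k+1}(k+1)!\,p^{2k-1}$), and the upper bound sums the expected number of subgraphs of the classes from Lemma~\ref{lem:vanishing threshold_combination tuples} obtained by ``replacing $m$ diagonal edges $\{x_{i-1},x_{i+1}\}$ with $2m$ new ones,'' i.e.\ $\sum_{m=0}^{k-1}\binom{n}{k+1+m}(k+1+m)!\,p^{2k-1+m}(1-p)^m$; the hypothesis $p=1-O(\ln n/n)$ then kills every $m\ge 1$ term. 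You instead use the elementary rank--nullity sandwich $t_{k,k}-t_{k-1,k}\le\beta_{k,k}\le t_{k,k}$ — which the paper itself uses only later, in the proof of the central limit theorem (Theorem~\ref{thm:central limit thm}) — together with direct computations $\E[t_{k,k}]=(n)_{k+1}p^{k}$ and $\E[t_{k-1,k}]\le(k-1)(n)_k p^{k-2}(1-p)$. The two routes hit the same limit, but yours has some advantages: it is more elementary, it decouples this theorem from the somewhat delicate upper-bound counting of Lemma~\ref{lem:vanishing threshold_combination tuples}, and it makes transparent that the only genuine input from the density hypothesis is $p\to 1$ (so $p^{1-k}\to 1$), the rate $1-p=O(\ln n/n)$ being more than enough to annihilate the $t_{k-1,k}$ correction after normalization. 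One small note: your bound on $\E[t_{k-1,k}]$ honestly drops the requirement that the single distance-$2$ pair have a common neighbour among the remaining chosen landmarks or elsewhere in $G$, which is exactly the right move for an upper bound — good that you flagged it as a deliberate weakening rather than an identity.
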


\begin{proof}
    In the regime where eulerian magnitude homology does not vanish we know that $0<q<\frac{k+1}{2k-1}$.
    From Lemma \ref{lem:vanishing threshold_single tuple} the Betti number $\beta_{k,k}$ is at least the number of single vertex structure graphs,
    
    \begin{equation}
    \label{eq:expected value left inequality}
    \binom{n}{k+1}(k+1)!p^{2k-1} \leq \beta_{k,k},
    \end{equation}
    
    and from Lemma and \ref{lem:vanishing threshold_combination tuples} it is at most the number of combinations obtained by replacing $m$ edges $(x_{i-1},x_{i+1})$ with $2m$ edges $(x_{i-1},x'_{i})$, $(x'_{i},x_{i+1})$
    
    \begin{align}
    \label{eq:expected value right inequality 1}
    \begin{split}
        \beta_{k,k} &\leq \sum_{m=0}^{k-1} \binom{n}{k+1+m}(k+1+m)!p^{2k-1-m+2m}(1-p)^m \\
                    & = \sum_{m=0}^{k-1} \binom{n}{k+1+m}(k+1+m)!p^{2k-1+m}(1-p)^m,
    \end{split}
    \end{align}
    
    which for $p=n^{-q}$ is equal to
    \begin{align}
    \label{eq:expected value right inequality 2}
    \begin{split}
    &\sum_{m=0}^{k-1} \binom{n}{k+1+m}(k+1+m)!n^{-q(2k+m-1)}(1-n^{-q})^m \\
    \sim
    &\sum_{m=0}^{k-1} n^{k+1+m}n^{-q(2k+m-1)}(1-n^{-q})^m \\
    = &n^{(k+1) - q(2k-1)} + \sum_{m=1}^{k-1} n^{k+1+m}n^{-q(2k+m-1)}(1-n^{-q})^m.
    \end{split}
    \end{align}

    Now, the assumption that $p=1-O\left(\frac{\ln(n)}{n} \right)$ is telling us that $p$ behaves asymptotically as $n^{-\frac{1}{n}}$. Therefore $(1-n^{-q}) \sim (1-n^{-\frac{1}{n}}) \xrightarrow{n \to \infty} 0$, and we see that the only subgraphs asymptotically contributing to eulerian magnitude homology are the ones for which $m=0$, i.e. the ones induced by a single tuple $(x_0,\dots,x_k)$.

    Therefore, putting together equations \ref{eq:expected value left inequality}, \ref{eq:expected value right inequality 1} and \ref{eq:expected value right inequality 2} we obtain
    \[
    \lim\limits_{n\to \infty} \frac{\mathbb{E}[\beta_{k,k}]}{n^{k+1}p^{2k-1}} = 1.
    \]
\end{proof}

We can also prove a central limit type result for $\beta_{k,k}(G(n,p))$. In what follows we will denote by $\Phi(\cdot)$ the distribution function of the standard normal distribution. Recall that a sequence $\{X_n\}_n$ of random variables converges weakly to a random variable $X,$ and we write $X_n \Rightarrow X,$ if $\lim\limits_{n \to \infty} \mathbb{E}[f(X_n)] = \mathbb{E}[f(X)]$  for all bounded continuous functions $f$.

\begin{theorem}
	\label{thm:central limit thm}
    \sloppy
	Fix $k$ nonnegative and write $\beta_{k,k} = \rank(EMH_{k,k}(G(n,p)))$.
 If $p = 1-O\left(\frac{\ln(n)}{n} \right)$, then as $n \to \infty$ we have that the sequence $\{\mathbb{E}[\beta_{k,k}]\}_k$ converges weakly to the Betti number $\beta_{k,k}$,
	\[
	\frac{\beta_{k,k} -  \mathbb{E}[\beta_{k,k}]}{\sqrt{Var(\beta_{k,k})}} \Rightarrow N(0,1).
	\]
\end{theorem}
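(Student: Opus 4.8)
The plan is to adapt the strategy of Kahle and Meckes \cite{kahle2013limit, kahle2015erratum}: first show that in this dense regime $\beta_{k,k}$ coincides, up to an error negligible at the scale of its standard deviation, with a subgraph-type counting statistic, and then establish a normal approximation for that statistic and transfer it via Slutsky's theorem.

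\emph{Step 1 (reduction to a subgraph count).} By Corollary \ref{cor:homisker}, $\beta_{k,k} = \dim\ker(\partial_{k,k})$, and by Corollary \ref{cor:cycle_spanning_set} this kernel is spanned by cycles minimally supported on local, compatible collections $X$ whose structure graph is a single vertex, a clique-tree, or an even cycle $C_d$. The single-vertex collections contribute exactly
\[
Z_n = \bigl|\{\bar x\in ET_{k,k}(G(n,p)) : \partial_{k,k}\bar x = 0\}\bigr|
\]
to the dimension of the kernel, since such trails are distinct standard basis vectors of $EMC_{k,k}$ and hence linearly independent; the remaining generators span a complement of dimension $R_n := \beta_{k,k} - Z_n \geq 0$, bounded above by the number of spanning cycles associated to collections $X$ with $|X|\geq 2$. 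Rerunning the counting argument in the proof of Theorem \ref{thm:asymp size BettiEMH}, which rests on Lemma \ref{lem:vanishing threshold_combination tuples}, under the hypothesis $p = 1 - O(\ln n/n)$ gives $\E[R_n] = o\bigl(n^{k+1}p^{2k-1}\bigr) = o(\E[Z_n])$, and the analogous estimate on second moments of these complex-cycle counts yields $\E[R_n^2] = o(Var(Z_n))$; hence $R_n = o_P\bigl(\sqrt{Var(Z_n)}\bigr)$ and $Var(\beta_{k,k}) \sim Var(Z_n)$.

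\emph{Step 2 (CLT for $Z_n$).} Write $Z_n = \sum_{\bar x}\xi_{\bar x}$ over ordered $(k+1)$-tuples of distinct vertices, where $\xi_{\bar x}$ is the product of the $2k-1$ edge-indicators occurring in $\alpha(\mathcal H(\{\bar x\}))$. Then $\xi_{\bar x}$ and $\xi_{\bar y}$ are independent whenever $|L(\bar x)\cap L(\bar y)|\leq 1$, so $Z_n$ has the dependency structure of a generalised subgraph count; such statistics satisfy a central limit theorem whenever their variance diverges, by the Stein's-method argument used in \cite[Thm.~2.3]{kahle2013limit} and \cite[Thm.~1.1]{kahle2015erratum} (with ``clique'' replaced by ``trail of vanishing differential''). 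As $Var(Z_n)\to\infty$ in this regime, the dominant contribution coming from pairs of trails sharing a single edge, we obtain $\bigl(Z_n - \E[Z_n]\bigr)/\sqrt{Var(Z_n)} \Rightarrow N(0,1)$.

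\emph{Step 3 (conclusion and main obstacle).} Decompose
\[
\frac{\beta_{k,k} - \E[\beta_{k,k}]}{\sqrt{Var(\beta_{k,k})}} = \sqrt{\frac{Var(Z_n)}{Var(\beta_{k,k})}}\cdot\frac{Z_n - \E[Z_n]}{\sqrt{Var(Z_n)}} + \frac{R_n - \E[R_n]}{\sqrt{Var(\beta_{k,k})}}.
\]
By Step 1 the variance ratio tends to $1$ and the final term is $o_P(1)$; by Step 2 the middle factor converges weakly to $N(0,1)$. Slutsky's theorem then gives the claim. The main obstacle is Step 1: one must control the cycles with nontrivial structure graphs not merely in expectation (as in Lemma \ref{lem:vanishing threshold_combination tuples} and Theorem \ref{thm:asymp size BettiEMH}) but at the scale of the standard deviation of $Z_n$, which requires a second-moment bound on their number together with a matching lower bound on $Var(Z_n)$; it is precisely here that the density hypothesis $p = 1 - O(\ln n/n)$ is used, and the normal approximation of Step 2 is then routine given the cited results.
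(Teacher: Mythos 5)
Your proposal is correct in outline but takes a genuinely different route from the paper, and it leaves an acknowledged gap that the paper's route sidesteps.

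The paper sandwiches $\beta_{k,k}$ via the rank--nullity (Morse-type) inequality
$t_{k,k} - t_{k-1,k} \leq \beta_{k,k} \leq t_{k,k}$,
where $t_{k,\ell} = |ET_{k,\ell}(G(n,p))|$ is the \emph{total} number of eulerian $k$-trails of length $\ell$ --- a plain subgraph count with no condition on the differential, requiring only $k$ edges per trail. Both bounds are explicit sums of edge-indicator products, so the paper can apply the dissociated-variables CLT of Barbour--Karo\'nski--Ruci\'nski uniformly to each (Claims 2 and 3) and then show their means and variances are asymptotically interchangeable (Claim 1), using only one-sided variance estimates that collapse via Vandermonde's identity. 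Your decomposition $\beta_{k,k} = Z_n + R_n$, with $Z_n$ the number of trails of vanishing differential (a $(2k-1)$-edge pattern count) and $R_n$ the rank of the quotient of $\ker\partial_{k,k}$ by the span of those singleton cycles, is structurally valid: $Z_n \le \beta_{k,k}$ since those trails are linearly independent standard basis vectors, so $R_n \ge 0$. But $R_n$ is a rank of a quotient group, not a sum of indicators; before any moment computation it must first be dominated by a count of cycles on multi-trail collections via Corollary \ref{cor:cycle_spanning_set}, and the resulting enumeration (with multiplicities, over all compatible clique-tree and even-circuit structure graphs) is substantially more involved than the paper's count $t_{k-1,k}$.

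The gap you flag at the end of Step 1 is exactly the missing piece: you need both a second-moment bound $\E[R_n^2] = o(\mathrm{Var}(Z_n))$ on the complex-cycle count and a matching \emph{lower} bound on $\mathrm{Var}(Z_n)$, neither of which is supplied by Lemma \ref{lem:vanishing threshold_combination tuples} or Theorem \ref{thm:asymp size BettiEMH}, which control only first moments. The paper avoids this entirely because its error term $t_{k-1,k}$ is itself a simple indicator sum whose variance and covariance with $t_{k,k}$ are computed directly. If you filled that second-moment gap your Slutsky argument in Step 3 would go through; as written, the proposal identifies the right obstacle but does not surmount it, whereas the paper's sandwich is chosen precisely so that the obstacle never arises.
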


\begin{proof}
    Let $G$ be a graph.
    Write $t_{k,\ell} = |ET_{k,\ell}(G(n,p))|$ for the random variable providing the number of eulerian $(k)$-trails  of length $\ell$ in a graph sampled from $G(n,p).$
    By Lemma \ref{lem:LowerTriangular}, $EMC_{k+1,k}(G)=\langle0\rangle$, so $EMH_{k,k}(G)$ is free abelian. From the rank-nullity theorem, we conclude that
	\begin{equation}
		\label{eq:morselikeineq}
		-t_{k-1,k} + t_{k,k} \leq \beta_{k,k} \leq t_{k,k}.
	\end{equation}
	
	Observe that $t_{k,k}$ is the number of $(k+1)$-tuples of distinct vertices that induce in the graph $G$ a path of length $k$. There are $\binom{n}{k+1}(k+1)!$ such paths possible for a graph on $n$ vertices, and for $G \sim G(n,p)$ they each appear with probability $p^k$.
    For $t_{k-1,k}$, notice that this term counts $(k)$-tuples of vertices that induce in the graph $G$ a path of length $k$. So we still need to account for $k+1$ visited vertices (even if one of those is not explicitly mentioned in the tuple). Further, we need to account for two facts: first, each edge of the induced path appears with probability $p^k$; and second, at least one of the edges $(x_{i-1},x_{i+1})$ will not appear in the induced graph.
    This leaves us with $\binom{n}{k+1}(k+1)!p^k(1-p^{k-1})$ such tuples.
    
    To prove the result we will need the following three intermediate claims:
	\begin{enumerate}
		\item $\lim\limits_{n \to \infty} \frac{Var(t_{k,k})}{Var(t_{k,k} - t_{k-1,k})} =1$
		\item $\frac{t_{k,k} -  \mathbb{E}[t_{k,k}]}{\sqrt{Var(t_{k,k})}} \Rightarrow N(0,1)$ as $n \to \infty$
		\item $\frac{(t_{k,k} -t_{k-1,k}) -  \mathbb{E}[t_{k,k} -t_{k-1,k}]}{\sqrt{Var(t_{k,k} - t_{k-1,k})}} \Rightarrow N(0,1)$ as $n \to \infty$.
	\end{enumerate}

 Deferring their proofs, we now apply these three claims to prove the theorem.  Take $\alpha \in \mathbb{R}$. Then from the inequalities in equation (\ref{eq:morselikeineq}), we get
	
	\[
	\mathbb{P} \left[ \frac{t_{k,k} -  \mathbb{E}[t_{k,k}]}{\sqrt{Var(t_{k,k})}} \leq \alpha \right] \leq
	\mathbb{P} \left[ \frac{\beta_{k,k} -  \mathbb{E}[t_{k,k}]}{\sqrt{Var(t_{k,k})}} \leq \alpha \right] \leq
	\mathbb{P} \left[ \frac{(t_{k,k} -t_{k-1,k}) -  \mathbb{E}[t_{k,k}]}{\sqrt{Var(t_{k,k})}} \leq \alpha \right].
	\] 
	
	Because of Claim 2, the left term $\mathbb{P} \left[ \frac{t_{k,k} -  \mathbb{E}[t_{k,k}]}{\sqrt{Var(t_{k,k})}} \leq \alpha \right]$ tends, as $n \to \infty$, to $\Phi(\alpha),$ the distribution function of the standard normal distribution.
	For the right-hand-side we can proceed analogously as in \cite[Theorem 2.3]{kahle2013limit}.
 
	Thus, note that
	\begin{align}
		\label{eq:prob from Morse ineq}
			\mathbb{P} &\left[ \frac{(t_{k,k} -t_{k-1,k}) -  \mathbb{E}[t_{k,k}]}{\sqrt{Var(t_{k,k})}} \leq \alpha \right] \leq 
			\mathbb{P} \left[ \frac{(t_{k,k} -t_{k-1,k}) -  \mathbb{E}[t_{k,k} -t_{k-1,k}]}{\sqrt{Var(t_{k,k} -t_{k-1,k})}} \leq \alpha - \varepsilon \right] + \nonumber\\
			+ &\mathbb{P} \left[ \left| \frac{(t_{k,k} -t_{k-1,k}) -  \mathbb{E}[t_{k,k} -t_{k-1,k}]}{\sqrt{Var(t_{k,k} -t_{k-1,k})}} - \frac{(t_{k,k} -t_{k-1,k}) -  \mathbb{E}[t_{k,k}]}{\sqrt{Var(t_{k,k})}} \right| > \varepsilon \right] +  \nonumber\\
			&+ \mathbb{P} \left[ \frac{(t_{k,k} -t_{k-1,k}) -  \mathbb{E}[t_{k,k}]}{\sqrt{Var(t_{k,k})}} \leq \alpha,
			\left| \frac{(t_{k,k} -t_{k-1,k}) -  \mathbb{E}[t_{k,k} -t_{k-1,k}]}{\sqrt{Var(t_{k,k} -t_{k-1,k})}}\right| \leq \varepsilon \right].
	\end{align}
	
	Using Claim 3 we see that the first summand of the right-hand-side of equation (\ref{eq:prob from Morse ineq}) tends to $\Phi(\alpha - \varepsilon)$, and the last summand is bounded above by $\Phi(\alpha + \varepsilon) - \Phi(\alpha - \varepsilon)$.
	For the middle term,
 notice that from the assumption $p=1-O\left(\frac{\ln(n)}{n} \right)$ we have $p \sim n^{-\frac{1}{n}}$, and thus
	\[
	\lim\limits_{n\to \infty}\frac{\mathbb{E}[t_{k-1,k}]}{\mathbb{E}[t_{k,k}]} = 
	\lim\limits_{n\to \infty}\frac{\binom{n}{k+1}(k+1)!p^k(1-p^{k-1})}{\binom{n}{k+1}(k+1)!p^{k}} = 0.
	\]
	\noindent It follows that 
	\begin{equation}
    \label{eq:limit mean values quotient}
	\lim\limits_{n\to \infty}\frac{\mathbb{E}[t_{k,k}]}{\mathbb{E}[-t_{k-1,k}+t_{k,k}]} =1,
    \end{equation}
 and so using the limit in equation \ref{eq:limit mean values quotient} and in claim 1 we get that

    \begin{equation*}
        \begin{aligned}
            &\lim\limits_{n\to \infty} \frac{\E[t_{k,k}]}{\sqrt{Var(t_{k,k})}} - 
		\frac{\E[t_{k,k} -t_{k-1,k}]}{\sqrt{Var(t_{k,k} -t_{k-1,k})}} = \\
            &= \frac{\E[t_{k,k} -t_{k-1,k}]}{\sqrt{Var(t_{k,k} -t_{k-1,k})}} \cdot
            \lim\limits_{n\to \infty} \frac{\E[t_{k,k}]}{\sqrt{Var(t_{k,k})}}\frac{\sqrt{Var(t_{k,k} -t_{k-1,k})}}{\E[t_{k,k} -t_{k-1,k}]} -1 \\
            &= \frac{\E[t_{k,k} -t_{k-1,k}]}{\sqrt{Var(t_{k,k} -t_{k-1,k})}} \cdot 0 \\
            &=0.
        \end{aligned}    
    \end{equation*}
 
    It is thus possible to take $n$ large enough so that
	\begin{equation}
		\label{eq:condition for second term}
		\left| \frac{\E[t_{k,k}]}{\sqrt{Var(t_{k,k})}} - 
		\frac{\E[t_{k,k} -t_{k-1,k}]}{\sqrt{Var(t_{k,k} -t_{k-1,k})}} \right| < \frac{\varepsilon}{2}.
	\end{equation}
	
	Now recall that, by Chebyshev's inequality, for a random variable $X$ with finite non-zero variance $\sigma^2$ (and finite expected value $\mu$) it holds that for any positive real number $r$
	\[
	\mathbb{P} \left( \left|X - \mu \right| \geq r \sigma \right) \leq \frac{1}{r^2}.
	\]
	
\noindent This, together with the condition in equation (\ref{eq:condition for second term}), implies that
	\begin{align*}
		&\mathbb{P} \left[ \left| \frac{(t_{k,k} -t_{k-1,k}) -  \mathbb{E}[t_{k,k} -t_{k-1,k}]}{\sqrt{Var(t_{k,k} -t_{k-1,k})}} - \frac{(t_{k,k} -t_{k-1,k}) -  \mathbb{E}[t_{k,k}]}{\sqrt{Var(t_{k,k})}} \right| > \varepsilon \right] \\
        &= \mathbb{P} \left[\left| \left( 
        \frac{t_{k,k} -t_{k-1,k}}{\sqrt{Var(t_{k,k} -t_{k-1,k})}} - \frac{t_{k,k} -t_{k-1,k}}{\sqrt{Var(t_{k,k})}}\right) + \left(
        \frac{\E[t_{k,k}]}{\sqrt{Var(t_{k,k})}} - 
		\frac{\E[t_{k,k} -t_{k-1,k}]}{\sqrt{Var(t_{k,k} -t_{k-1,k})}} \right)  \right| > \varepsilon\right] \\
		& \leq \mathbb{P} \left[ (t_{k,k} -t_{k-1,k})\left| \frac{1}{\sqrt{Var(t_{k,k})}} - \frac{1}{\sqrt{Var(t_{k,k} -t_{k-1,k})}} \right| > \frac{\varepsilon}{2} \right] \\
		&\leq 4 \varepsilon^{-2} \left(\frac{\sqrt{Var(t_{k,k} -t_{k-1,k})}}{\sqrt{Var(t_{k,k})}}-1 \right)^2,
	\end{align*}
	\noindent which by Claim 1 goes to zero for any fixed $\varepsilon >0$. In conclusion, the right side of equation (\ref{eq:prob from Morse ineq}) is bounded above by $\Phi(\alpha + \varepsilon)$ in the limit of $n \to \infty$, and therefore the central limit result for $\beta_{k,k}$ follows.

\medskip	
	\noindent We proceed now to prove the three claims the above argument relies on.

	\noindent\underline{Claim 1:} $\lim\limits_{n \to \infty} \frac{Var(t_{k,k})}{Var(t_{k,k} - t_{k-1,k})} =1$\\
  By definition, $Var(t_{k,k}) = \mathbb{E}[t_{k,k}^2] - \mathbb{E}[t_{k,k}]^2$.
		
		Let $\chi_I$ be the indicator function taking value $1$ if a tuple $I$ spans a face in $EMC_{k,k}(G)$, i.e. if $I$ induces in $G$ a path $p_I$ of length $k$.
		So,
		\[
		t_{k,k} = \sum_{\substack{I \subseteq \{0,\dots,n\} \\ |I|=k+1 \\ len(p_I)=k}} \chi_I
		\]
		and
		\[
		\mathbb{E}[t_{k,k}^2] = \sum_{I,J} \mathbb{E}[\chi_I \chi_J].
		\]
		Therefore, $Var(t_{k,k}) = \sum_{I,J} \mathbb{E}[\chi_I \chi_J] - \mathbb{E}[t_{k,k}]^2$.			
		
		The second summand is, as we noticed in Theorem \ref{thm:asymp size BettiEMH}, $\left(\binom{n}{k+1}(k+1)! p^k\right)^2$.
		The first summand requires a little more work.
		Indicating by $j$ the number of vertices that the tuples $I$ and $J$ might share we get
		\[
		\sum_{I,J} \mathbb{E}[\chi_I \chi_J] = 
		\binom{n}{k+1}(k+1)! \sum_{j=0}^{k+1} \binom{k+1}{j} \binom{n - (k+1)}{k+1 -j} p^{2k - f(j)},
		\] 
		where $f(j)$ is the number of edges shared by the induced paths $p_I$ and $p_J$.
		Since the number $f(j)$ of shared edges will always be at most $k$ (the length of the path) and being $p<1$ we can write
		
		\begin{align*}
			&\binom{n}{k+1}(k+1)! \sum_{j=0}^{k+1} \binom{k+1}{j} \binom{n - (k+1)}{k+1 -j} p^{2k - f(j)}\\
			&\leq 
			\binom{n}{k+1}(k+1)! \sum_{j=0}^{k+1} \binom{k+1}{j} \binom{n - (k+1)}{k+1 -j} p^{2k - k} \\
			&= \binom{n}{k+1}(k+1)! p^k \sum_{j=0}^{k+1} \binom{k+1}{j} \binom{n - (k+1)}{k+1 -j} \\
			&= \binom{n}{k+1}^2 (k+1)!p^k,
		\end{align*}
		where the last equality holds because of Vandermond's identity.
		
		So now, 
		
		\begin{align*}
			Var(t_{k,k}) &= \sum_{I,J} \mathbb{E}[\chi_I \chi_J] - \mathbb{E}[t_{k,k}]^2 \\
			&\leq  \binom{n}{k+1}^2(k+1)! p^k - \binom{n}{k+1}^2((k+1)!)^2 p^{2k} \\
			&= \binom{n}{k+1}^2((k+1)!)^2 p^{2k} (((k+1)!)^{-1}p^{-k}-1) \\
			&\sim n^{2(k+1)}p^{2k}\left(\frac{p^{-k}}{(k+1)!}-1\right).
		\end{align*}
		
		Thus
		\[
		\lim\limits_{n\to \infty} \frac{Var(t_{k,k})}{n^{2(k+1)}p^{2k}(p^{-k}/(k+1)!-1)} = 1.
		\]

        Performing similar computations to estimate $Var(t_{k-1,k})$ we obtain
        \[
        Var(t_{k-1,k}) \leq  \binom{n}{k+1}^2(k+1)! (1-p^{k-1}) p^k - \binom{n}{k+1}^2((k+1)!)^2 (1-p^{k-1})^2 p^{2k},
        \]
        
		and from this it follows that $\frac{Var(t_{k-1,k})}{Var(t_{k,k})} = o(1)$.
		
		Now, call $I$ and $J$ tuples in $EMC_{k-1,k}(G)$ and $EMC_{k,k}(G)$ respectively.
		Expanding the same way as above we find
        
		\begin{align*}
			Cov(t_{k-1,k},t_{k,k}) &= \sum_{I,J} \mathbb{E}[\chi_I \chi_J] - \mathbb{E}[t_{k-1,k}] \mathbb{E}[t_{k,k}] \\
			&=\binom{n}{k+1}(k+1)! (1-p^{k-1}) \sum_{j=0}^{k+1} \binom{k}{j} \binom{n-(k+1)}{k+1-j} p^{2k - f(j)} - \\
                & - \binom{n}{k+1}(k+1)!p^k(1-p^{k-1}) \binom{n}{k+1}(k+1)!p^k \\
			&\leq \binom{n}{k+1}^2(k+1)! p^k (1-p^{k-1}) - \binom{n}{k+1}^2\left((k+1)!\right)^2 p^{2k} (1-p^{k-1})\\
			&= \binom{n}{k+1}^2 \left((k+1)!\right)^2 p^{2k} (1-p^{k-1}) \left(\frac{p^{-k}}{(k+1)!} -1\right).
		\end{align*} 
		
		Therefore
		\[
		\lim\limits_{n\to \infty} \frac{Cov(t_{k-1,k},t_{k,k})}{n^{2(k+1)}p^{2k}(1-p^{k-1})(p^{-k}/(k+1)! -1)} =
		1,
		\]
		which implies that $\frac{Cov(t_{k-1,k},t_{k,k})}{Var(t_{k,k})} = o(1)$, completing the proof of the claim.
		
	\medskip	\noindent\underline{Claims 2 and 3:}  $\frac{t_{k,k} -  \mathbb{E}[t_{k,k}]}{\sqrt{Var(t_{k,k})}} \Rightarrow N(0,1)$ as $n \to \infty,$
 
		\hspace{1.3cm} and $\frac{(t_{k,k} -t_{k-1,k}) -  \mathbb{E}[t_{k,k} -t_{k-1,k}]}{\sqrt{Var(t_{k,k} - t_{k-1,k})}} \Rightarrow N(0,1)$ as $n \to \infty$.
  
  The proofs of claims 2 and 3 are a consequence of an abstract normal approximation theorem for dissociated random variables showed in \cite{barbour1989central}. 
		
		We recall that, given a set $A$ of $n$-tuples, a set of random variables $\{X_{\boldsymbol{a}}:\boldsymbol{a}=(a_1,\dots,a_n)\in A\}$ is \emph{dissociated} if there exist two subsets of random variables $\{X_{\boldsymbol{a}}:\boldsymbol{a} \in A'\}$ and $\{X_{\boldsymbol{a}}:\boldsymbol{a} \in A''\}$ that are independent whenever $(\cup_{\boldsymbol{a} \in A'} (a_1,\dots,a_n)) \cap (\cup_{\boldsymbol{a} \in A''} (a_1,\dots,a_n)) = \emptyset$. 
		
		Now let $W=\sum_{\boldsymbol{a} \in A} X_{\boldsymbol{a}}$, and for each $\boldsymbol{a} \in A$ define $D_{\boldsymbol{a}}=\{\boldsymbol{\alpha} \in A: (\alpha_1,\dots,\alpha_n) \cap (a_1,\dots,a_n) \neq \emptyset \}$, i.e. $D_{\boldsymbol{a}}$ is a dependency neighborhood for $\boldsymbol{a}$.
		
		It is proved in \cite[Theorem 1]{barbour1989central} that if $\mathbb{E}[X_{\boldsymbol{a}}]=0$ and $\mathbb{E}[W^2]=1$, then
		\begin{equation}
        \label{eq:d(W,Z)}
		d_1(W,Z) \leq K \sum_{\boldsymbol{a} \in A} \sum_{\boldsymbol{\alpha},\boldsymbol{\beta} \in D_{\boldsymbol{a}}} 
		\left( \mathbb{E}|X_{\boldsymbol{a}}X_{\boldsymbol{\alpha}}X_{\boldsymbol{\beta}}| 
		+  \mathbb{E}|X_{\boldsymbol{a}}X_{\boldsymbol{\alpha}}|\mathbb{E}|X_{\boldsymbol{\beta}}|\right),
		\end{equation}
		\noindent where $d_1(\cdot,\cdot)$ is the \emph{$L_1$-Wasserstein distance}, $Z$ is a standard normal random variable, $K$ is a universal constant.
		
		To show that $(t_{k,k} - t_{k-1,k})$ satisfies a central limit theorem, take the index set $A$ to be the potential edge set of $k$-paths induced by simplices in $EMC_{k-1,k}(G)$ and $EMC_{k,k}(G)$. 
		That is, the potential edge set of $k$-paths spanning a set of $(k+e)$ ($e \in \{0,1\}$) vertices in $G(n,p)$.
		We then associate to each $\boldsymbol{a} \in A$ the set $V_{\boldsymbol{a}}$ of corresponding vertices and we define
		\[
		X_{\boldsymbol{a}} = \frac{1}{\sqrt{Var_{t_{k,k}}}}(\chi_{V_{\boldsymbol{a}}} - \mathbb{E}[\chi_{V_{\boldsymbol{a}}}]).
		\]
		
	\noindent With this definition, the $\{X_{\boldsymbol{a}}\}$ are dissociated, $W=\sum_{\boldsymbol{a} \in A} X_{\boldsymbol{a}}$, $\mathbb{E}[W]=0$ and $Var(W)=\mathbb{E}[W^2]=1.$ The proof of the claim is analogous to \cite[Theorem 2.3]{kahle2013limit}, and for completeness we summarize here the main steps. 
    Specifically, what is left to do is to bound the first and second half of the the sum in equation \ref{eq:d(W,Z)} and show that they both tend to zero as $n\to\infty$.

    In both cases the technique used by Kahle and Meckes is to partition the dependency neighborhood $D_{\boldsymbol{a}}$ for each $\boldsymbol{a}$ into subsets of indices $D_{\boldsymbol{a}}^e$ whose corresponding spanning set of vertices $V_{\boldsymbol{a}}^e$ has size $k+e$.
    Then, decomposing the two sums (as in the variance estimate) by the size of the intersection of the vertex sets, it is possible to bound the first and second half of the sum with $o\left(\frac{1}{n}\right)$, and conclude that the whole quantity tends to zero as $n$ tends to infinity.
  \qedhere
		
\end{proof}

\begin{remark}
    Note that we are referring to the original central limit theorem proof \cite[Theorem 2.3]{kahle2013limit} and not the erratum presented in \cite[Theorem 1.1]{kahle2015erratum}. 
    This is because Kahle and Meckes needed to slightly restrict the $p$-interval where the estimate for the expected value holds by setting $p=\omega\left(n^{-1/k+\delta}\right)$ and $p=o\left(n^{-1/(k+1)-\delta}\right)$ with $\delta >0$ to avoid that the difference in means of the upper and lower bounds for the Betti numbers under consideration was too large relative to the normalization.
    In our case we were able to leave the assumption that $p=1-O\left(\frac{\ln(n)}{n}\right)$. 
\end{remark}

\section{Eulerian magnitude homology for random geometric graphs on $T^2$}
\label{sec:EMH_RGG}

Like Erd\H{o}s-R\'{e}nyi graphs, \emph{random geometric graphs} are a fundamental model for random graphs across a variety of disciplines. First introduced by Edward N. Gilbert in \cite{gilbert1961random} to model communications between radio stations, the original model involved sampling points from a Poisson point process and connecting points that fall within a fixed distance. Another common model involves selecting a fixed number of points independently and identically distributed on the space. The two models are closely connected, as observed by Penrose in \cite[Section 1.7]{penrose2003random}. Because it is somewhat more commonly studied in the context of stochastic topology, here we will use the latter model.

\begin{definition}
	Let $(X ,d, m)$ be a metric space equipped with be a Borel probability measure $m$.
	Given a positive real number $r >0$ and positive integer $n$, the \emph{Random Geometric Graph (RGG) model} $RGG(n, r, (X, d, m))$ is the probability distribution on graphs with $n$ vertices $\{x_1, \dots, x_n\}$ given by
    
	\begin{itemize}
		\item selecting a collection of points $\{p_i\}_{i=1}^n$ in $X$ according to the to the probability measure $m^{\otimes n}$ on $X^n$, and
		\item for any $i\neq j \in \{1, \dots, n\},$ taking the edge $\{x_i, x_j\}$ to be in $G$ if and only if $d(p_i,p_j)\leq r$.
	\end{itemize}
	
\end{definition}

We are particularly interested in bounded regions in Euclidean spaces with the uniform probability measure, as these are of central interest in many applications. However, to avoid boundary effects we will follow \cite{yu2009computing}, and instead consider the flat torus of area $T^2_A = [0,\sqrt{A})^2,$ with the uniform probability measure, $RGG(n, r,(T^2_A, d_T, u)),$ which we will abbreviate to $G(n, r, A).$ 
By moving to the torus, we homogenize the probability of a single edge being part of our random geometric graph, as every point $x$ lies at the center of a euclidean ball of radius $r$, each of which is equally likely to contain other points. As we are again interested in studying limiting phenomena as $n \to \infty,$ we will take $r = n^{-q}$ for a fixed parameter $q.$ In particular, this will ensure that $r << \sqrt{A},$ so our euclidean balls around points do not self-intersect.

Our results in this context have similar flavor to those about ER graphs from Section \ref{sec:EMH_ER}, however they rely on somewhat different subgraph counting methods. However, in Theorem \ref{thm:vanishing threshold RGG}, we again find a threshold for $q$ beyond which $EMH_{k,k}(G(n,n^{-q},A))$ vanishes in expectation as $n \to \infty.$ And, in Theorem \ref{thm:asympBettiRGG} we again present an explicit formula for the asymptotic size of the Betti numbers $\E[\beta_{k,k}]$ as $n \to \infty,$ as well as a corresponding Central Limit Theorem in Theorem \ref{thm:RGG_CLT}.

\medspace

We start by recalling results presented by Yu in \cite{yu2009computing} which will provide us with fundamental tools in our analysis of RGGs. 

\begin{theorem}[Theorem 1 \cite{yu2009computing}]
	\label{thm:prob_edge_RGG}
	Let $G\sim G(n,r,A)$ and let $x_i\neq x_j$ be vertices of $G$. The probability of the edge $\{x_i,x_j\}$ appearing in $G$ is $\frac{\pi r^2}{|A|}$.
\end{theorem}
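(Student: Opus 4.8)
The plan is to exploit the homogeneity of the flat torus together with the independence of the sampled points. First I would recall that $T^2_A=[0,\sqrt{A})^2$ is naturally the abelian group $\mathbb{R}^2/\sqrt{A}\,\mathbb{Z}^2$, that the uniform probability measure $u$ is its normalized Haar measure and hence translation invariant, and that the torus metric $d_T$ is translation invariant as well. Since the points $p_1,\dots,p_n$ are drawn i.i.d.\ from $u$, the pair $(p_i,p_j)$ is distributed according to $u\otimes u$; conditioning on $p_i=x$ therefore leaves $p_j$ uniformly distributed on $T^2_A$, independently of the value of $x$.

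Next I would unwind the definition of the RGG model: the edge $\{x_i,x_j\}$ is present exactly when $d_T(p_i,p_j)\le r$, i.e.\ when $p_j$ lies in the closed metric ball $B(p_i,r)\subseteq T^2_A$. Combining this with the conditioning above and Fubini's theorem gives
\[
\Prob\big[\{x_i,x_j\}\in E\big]=\int_{T^2_A} u\big(B(x,r)\big)\,du(x).
\]
By translation invariance of $u$ and $d_T$, the quantity $u(B(x,r))$ is independent of $x$, so this integral equals $u(B(x_0,r))=\mathrm{area}(B(x_0,r))/A$ for any fixed basepoint $x_0$.

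Finally I would invoke the standing hypothesis from the text that $r=n^{-q}\ll\sqrt{A}$ (imposed precisely so that balls of radius $r$ do not self-intersect on the torus): for such $r$ the quotient projection $\mathbb{R}^2\to T^2_A$ restricts to an isometry from the Euclidean disk $D(0,r)$ onto $B(x_0,r)$, whence $\mathrm{area}(B(x_0,r))=\pi r^2$. Putting the two displays together yields $\Prob[\{x_i,x_j\}\in E]=\pi r^2/A=\pi r^2/|A|$, as claimed.

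The only point requiring any care — and it is hardly an obstacle — is the last step: one must be sure that the metric ball on the torus genuinely has Euclidean area $\pi r^2$ rather than being distorted by wraparound. This is exactly what the assumption $r\ll\sqrt{A}$ guarantees, and with it in hand the rest of the argument is a direct unwinding of the definitions of the model and of the uniform measure.
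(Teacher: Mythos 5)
The paper does not prove this statement: it is quoted verbatim as Theorem~1 of Yu's paper \cite{yu2009computing}, so there is no internal proof to compare against. Your argument is nonetheless a correct, self-contained derivation, and it is the standard one: translation invariance of the Haar (uniform) measure on $T^2_A$ reduces the edge probability to the normalized volume of a single metric ball, which equals $\pi r^2/|A|$ once the ball is an isometric copy of a Euclidean disk. The only hypothesis doing real work is that $r$ is small enough relative to $\sqrt{A}$ (say $2r < \sqrt{A}$, or the paper's standing $r \ll \sqrt{A}$) so that the ball does not wrap around; you flag this correctly, and it is implicit in the paper's asymptotic regime $r = n^{-q}$. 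One stylistic remark: the conditioning/Fubini step is unnecessary given translation invariance — since $p_i$ and $p_j$ are independent and the difference $p_j - p_i$ is again uniform on the torus, one can pass directly to $\Prob[d_T(p_i,p_j)\le r] = \Prob[\,p_j - p_i \in B(0,r)\,] = u(B(0,r))$ — but this is a shortcut, not a correction.
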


It is shown in \cite[Theorem 2]{yen2004link} that the occurrences of arbitrary pair-wise edges in RGGs are independent even if they share one end vertex. Combining this fact with the statement of Theorem \ref{thm:prob_edge_RGG} we obtain the following.

\begin{cor}[Corollary 3 \cite{yu2009computing}]
	\label{cor:prob_E2}
	Let $G\sim G(n,r,A)$ with vertices $V = \{x_1, \dots, x_n\}$, and let $\{x_{i_1}, x_{i_2}\} \neq \{x_{i_3}, x_{i_4}\}\in {V \choose 2}$. The probability of both $\{x_{i_1}, x_{i_2}\}$ and $\{x_{i_3}, x_{i_4}\}$  appearing as edges of $G$ is $\left(\frac{\pi r^2}{|A|}\right)^2$.
\end{cor}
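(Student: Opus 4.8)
The plan is to split into two cases according to whether the two distinct edges $\{x_{i_1}, x_{i_2}\}$ and $\{x_{i_3}, x_{i_4}\}$ are vertex-disjoint or share exactly one endpoint; they cannot share both endpoints, since they are distinct as elements of ${V \choose 2}$. In each case the goal is to show that the two edge-occurrence events are independent, after which Theorem \ref{thm:prob_edge_RGG} immediately gives the joint probability $\left(\frac{\pi r^2}{|A|}\right)^2$.

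First I would treat the vertex-disjoint case, where $\{i_1, i_2\} \cap \{i_3, i_4\} = \varnothing$. Since the positions $p_{i_1}, p_{i_2}, p_{i_3}, p_{i_4}$ are drawn independently according to the uniform measure $u$ on $T^2_A$, the pair $(p_{i_1}, p_{i_2})$ is independent of the pair $(p_{i_3}, p_{i_4})$. The event ``$\{x_{i_1}, x_{i_2}\} \in E$'' is measurable with respect to $(p_{i_1}, p_{i_2})$ alone, and likewise for the other edge, so the two events are independent, and by Theorem \ref{thm:prob_edge_RGG} each has probability $\frac{\pi r^2}{|A|}$. This case is essentially immediate from the i.i.d.\ sampling.

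Next I would treat the shared-endpoint case, say $i_2 = i_3$, so that the two edges are $\{x_{i_1}, x\}$ and $\{x, x_{i_4}\}$ with $x := x_{i_2}$; here the naive independence argument fails because both events depend on $p_x$. This is the crux of the statement, and I would invoke \cite[Theorem 2]{yen2004link}. The underlying mechanism to highlight is: conditioned on $p_x$, the random points $p_{i_1}$ and $p_{i_4}$ are independent, so the events $d(p_{i_1}, p_x) \le r$ and $d(p_x, p_{i_4}) \le r$ are conditionally independent; and, by the translation-invariance of the uniform measure on the torus together with $r \ll \sqrt{A}$ (so that the metric ball of radius $r$ about $p_x$ does not wrap around itself), the conditional probability of each of these events equals $\frac{\pi r^2}{|A|}$ no matter where $p_x$ lies. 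Averaging over $p_x$ then shows the unconditional joint probability factors as $\left(\frac{\pi r^2}{|A|}\right)^2$.

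Combining the two cases yields the corollary. The main obstacle is precisely the shared-endpoint case: one must use the homogeneity of $T^2_A$ to guarantee that the conditional edge probability does not depend on the location of the shared vertex, which is exactly why we work on the flat torus rather than on a bounded Euclidean region with boundary effects, and which is the content of the cited result of Yen et al.
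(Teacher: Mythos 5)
Your argument is correct and follows the same route as the paper: the key step in both is citing \cite[Theorem 2]{yen2004link} for independence of the two edge events when they share an endpoint, and then applying Theorem \ref{thm:prob_edge_RGG} to each. The paper states this as a one-line deduction without the case split or the conditioning-on-$p_x$ explanation you supply, but the substance is the same, and your added exposition of the torus-homogeneity mechanism is accurate.
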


By inductively applying this corollary, we can extend this argument to any  $k$-trail. 

\begin{lemma}
	\label{lem:RGG_trails_indep}
	Let $G\sim G(n,r,A)$ with vertices $V = \{x_1, \dots, x_n\}$, and let $(x_0, \dots, x_k) \in ET_k(G)$ be a  $k$-trail. The probability of the edges $\{x_i, x_{i+1}\}_{i=0}^{k-1}$ all appearing  as edges of $G$ is $\left(\frac{\pi r^2}{|A|}\right)^{k}$.
\end{lemma}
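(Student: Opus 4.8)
The plan is to prove this by induction on $k$, using Corollary \ref{cor:prob_E2} as the base case and iteratively building up the edge set of the trail one edge at a time, invoking the pairwise-independence result from \cite{yen2004link} at each stage.

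First I would set up the induction. For $k=1$, a $1$-trail $(x_0,x_1)$ consists of a single edge, so the claim that this edge appears with probability $\frac{\pi r^2}{|A|}$ is exactly Theorem \ref{thm:prob_edge_RGG}; the case $k=2$ is precisely Corollary \ref{cor:prob_E2}. For the inductive step, assume the result holds for $(k-1)$-trails, i.e. for a $(k-1)$-trail $(x_0,\dots,x_{k-1})$ the edges $\{x_i,x_{i+1}\}_{i=0}^{k-2}$ all appear with probability $\left(\frac{\pi r^2}{|A|}\right)^{k-1}$. Given an eulerian $k$-trail $(x_0,\dots,x_k)$, I would write the event that all $k$ edges $\{x_i,x_{i+1}\}_{i=0}^{k-1}$ are present as the intersection of the event $E_{\text{init}}$ that the first $k-1$ edges $\{x_i,x_{i+1}\}_{i=0}^{k-2}$ are present with the event $E_{\text{last}}$ that the final edge $\{x_{k-1},x_k\}$ is present. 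The key observation is that the $k$-th edge $\{x_{k-1},x_k\}$ is a single pairwise edge, and \cite[Theorem 2]{yen2004link} guarantees that occurrences of distinct pairwise edges in an RGG are independent — even when they share an endpoint, as $\{x_{k-1},x_k\}$ does with $\{x_{k-2},x_{k-1}\}$. One should note that the event $E_{\text{init}}$ is a conjunction of edge-events among the vertices $x_0,\dots,x_{k-1}$, none of which equals $\{x_{k-1},x_k\}$ (here the eulerian hypothesis, that the landmarks are distinct, is used so that $x_k$ is genuinely a new vertex and $\{x_{k-1},x_k\}$ is genuinely a new edge not already counted among the first $k-1$). By repeated application of the pairwise independence of Corollary \ref{cor:prob_E2} — or, more carefully, by the fact that the joint distribution of the indicator variables of distinct potential edges is a product of Bernoulli distributions, which is the content underlying \cite{yen2004link} and Corollary \ref{cor:prob_E2} — the event $E_{\text{last}}$ is independent of $E_{\text{init}}$. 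Hence
\[
\Prob\left(\bigcap_{i=0}^{k-1}\{\{x_i,x_{i+1}\}\in E\}\right) = \Prob(E_{\text{init}})\cdot\Prob(E_{\text{last}}) = \left(\frac{\pi r^2}{|A|}\right)^{k-1}\cdot\frac{\pi r^2}{|A|} = \left(\frac{\pi r^2}{|A|}\right)^{k},
\]
which closes the induction.

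The main obstacle is the justification of the independence step: Corollary \ref{cor:prob_E2} as stated only asserts independence of a single \emph{pair} of distinct edges, whereas here I need the last edge to be independent of a whole conjunction of the other $k-1$ edges. The honest way to handle this is to observe that what \cite{yen2004link} and Corollary \ref{cor:prob_E2} really establish is that the indicator random variables $\{\mathbf{1}[\{x_i,x_j\}\in E] : \{x_i,x_j\}\in\binom{V}{2}\}$ are \emph{mutually} independent (equivalently, the edge set, conditioned on the point locations being generic, behaves as if each edge were an independent Bernoulli trial with parameter $\pi r^2/|A|$), so any finite subcollection is independent and the probability of a conjunction factors. If one is only willing to cite the pairwise statement, then the cleanest rigorous route is the induction above combined with a short lemma that mutual independence of all pairwise edge indicators follows from the construction — i.e. from the fact that membership of $\{x_i,x_j\}$ depends only on the pair $(p_i,p_j)$, and one can group the pairwise comparisons so that disjoint edge-sets involve "fresh" randomness in the appropriate sense. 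I would state this factorization explicitly as the crux of the argument and then the computation is immediate.
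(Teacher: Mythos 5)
Your overall approach---induction on $k$, peeling off the final edge $\{x_{k-1},x_k\}$ and using that it shares only one endpoint with the preceding $(k-1)$-trail---is exactly the paper's proof. However, the crux of your attempted rigorization contains a genuine error: the indicator random variables $\{\mathbf{1}[\{x_i,x_j\}\in E]\}$ of a random geometric graph are \emph{not} mutually independent, and the ``short lemma'' you propose to establish this is false. The standard counterexample is a triangle: for three vertices $x_0,x_1,x_2$ on the torus, pairwise independence of the three edge-indicators holds (this is precisely the content of \cite[Theorem 2]{yen2004link}), but
\[
\Prob\bigl(\{x_0,x_1\}, \{x_1,x_2\}, \{x_0,x_2\} \text{ all present}\bigr) > \left(\tfrac{\pi r^2}{|A|}\right)^{3},
\]
since conditioning on $x_0$ and $x_2$ both lying within distance $r$ of $x_1$ makes it much more likely than baseline that $x_0$ and $x_2$ are within $r$ of each other. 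This is one of the essential ways RGGs differ from Erd\H{o}s--R\'enyi graphs, and the claim that the edge set ``behaves as if each edge were an independent Bernoulli trial'' is incorrect.

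What actually makes the inductive step go through is the specific chain structure of a trail together with the homogeneity of the torus, not any general mutual independence. The event $E_{\text{init}}$ that the first $k-1$ edges appear is measurable with respect to the positions $(p_0,\dots,p_{k-1})$, while the last edge involves the fresh vertex $x_k$ (here the eulerian hypothesis is used, exactly as you note). Because the sampling measure is uniform on the torus, for \emph{any} fixed value of $p_{k-1}$ one has $\Prob(|p_{k-1}-p_k|\le r \mid p_{k-1}) = \pi r^2/|A|$, a constant; hence $E_{\text{last}}$ is independent of the $\sigma$-algebra generated by $(p_0,\dots,p_{k-1})$, and in particular of $E_{\text{init}}$. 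This is a statement about one fresh edge versus a block of edges on an earlier vertex set, which is strictly weaker than mutual independence of all edges---and crucially, the weaker statement is true while the stronger one is not. The paper invokes the same idea informally (``shares only the end vertex $x_{k-1}$''), so if you replace your mutual-independence claim with this conditioning-on-positions argument, your proof becomes a correct and slightly more explicit version of the paper's.
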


\begin{proof}
Starting from the base step stated in Corollary \ref{cor:prob_E2}, consider the eulerian $(k-1)$-trail $(x_0,\dots,x_{k-1}) \in ET_{k-1}(G)$ and assume the probability of the edges $\{x_i, x_{i+1}\}_{i=0}^{k-2}$ all appearing  as edges of $G$ is $\left(\frac{\pi r^2}{|A|}\right)^{k-1}$.

Consider now the the  $k$-trail $(x_0, \dots, x_{k-1}, x_k) \in ET_k(G)$.
Since the edge $(x_{k-1}, x_k)$ shares only the end vertex $x_{k-1}$ with the the $(k-1)$-trail $(x_0,\dots,x_{k-1})$, we can conclude that the probability of the edges $\{x_i, x_{i+1}\}_{i=0}^{k-1}$ all appearing  as edges of $G$ is $\left(\frac{\pi r^2}{|A|}\right)^{k-1} \cdot \frac{\pi r^2}{|A|} = \left(\frac{\pi r^2}{|A|}\right)^{k}$.
\end{proof}

To prove our vanishing thresholds, we will leverage this computation, along with families of graphs called ``Y-graphs" in \cite{yu2009computing}. We require only a particular subset of this collection of families of graphs which we will call \emph{restricted Y-classes}. In particular, our families  will satisfy the conditions of \cite[Corollary 18]{yu2009computing}, which we will restate in our context in Corollary \ref{cor:prob_Y}. First, we require some terminology.

\begin{figure}[t]
	\begin{minipage}{0.18\linewidth}
		\centering
\begin{tikzpicture}[node distance={15mm}, thick, main/.style = {draw, circle}]
 		\node[main] (0) {}; 
 		\node[main] (1) [below of=0] {};  
 		\node[main] (2) [right of=0] {};
 		\node[main] (3) [below of=2] {};
	
 		\draw (0) -- (1);
 		\draw (0) -- (3);
 		\draw[dashed] (0) -- (2);	
            \draw[dashed] (1) -- (2);
 		\draw (1) -- (3);
 		\draw[dashed] (2) -- (3);
 						
 	\end{tikzpicture} 
  \subcaption*{$\mathcal{G}_1$}
	\end{minipage}
	\hfill
	\begin{minipage}{0.18\linewidth}
		\centering
\begin{tikzpicture}[node distance={15mm}, thick, main/.style = {draw, circle}]
 		\node[main] (0) {}; 
 		\node[main] (1) [below of=0] {};  
 		\node[main] (2) [right of=0] {};
 		\node[main] (3) [below of=2] {};
	
 		\draw[dashed] (0) -- (1);
 		\draw (0) -- (3);
 		\draw[dashed] (0) -- (2);	
            \draw[dashed] (1) -- (2);
 		\draw (1) -- (3);
 		\draw[dashed] (2) -- (3);
 						
 	\end{tikzpicture} 
  \subcaption*{$\mathcal{G}_2$}
	\end{minipage}
 \hfill
 \begin{minipage}{0.18\linewidth}
 \centering
 \begin{tikzpicture}[node distance={15mm}, thick, main/.style = {draw, circle}]
 		\node[main] (0) {}; 
 		\node[main] (1) [below of=0] {};  
 		\node[main] (2) [right of=0] {};
 		\node[main] (3) [below of=2] {};
	
 		\draw[dashed] (0) -- (1);
 		\draw[dashed] (0) -- (3);
 		\draw[dashed] (0) -- (2);	
            \draw[dashed] (1) -- (2);
 		\draw (1) -- (3);
 		\draw[dashed] (2) -- (3);
 						
 	\end{tikzpicture} 
  \subcaption*{$\mathcal{G}_3$}
	\end{minipage}
  \hfill
    \begin{minipage}{0.18\linewidth}
 \centering
 \begin{tikzpicture}[node distance={15mm}, thick, main/.style = {draw, circle}]
 		\node[main] (0) {}; 
 		\node[main] (1) [below of=0] {};  
 		\node[main] (2) [right of=0] {};
 		\node[main] (3) [below of=2] {};
	
 		\draw[dashed] (0) -- (1);
 		\draw[dashed] (0) -- (3);
 		\draw[dashed] (0) -- (2);	
            \draw[dashed] (1) -- (2);
 		\draw[dashed] (1) -- (3);
 		\draw[dashed] (2) -- (3);
 						
 	\end{tikzpicture} 
  \subcaption*{$\mathcal{G}_4$}
	\end{minipage}
 \hfill
  \begin{minipage}{0.18\linewidth}
 \centering
 \begin{tikzpicture}[node distance={15mm}, thick, main/.style = {draw, circle}]
 		\node[main] (0) {}; 
 		\node[main] (1) [below of=0] {};  
 		\node[main] (2) [right of=0] {};
 		\node[main] (3) [below of=2] {};
	
 		\draw[dashed] (0) -- (1);
 		\draw[dashed] (0) -- (3);
 		\draw (0) -- (2);	
            \draw[dashed] (1) -- (2);
 		\draw (1) -- (3);
 		\draw[dashed] (2) -- (3);
 						
 	\end{tikzpicture} 
  \subcaption*{$\mathcal{G}_5$}
	\end{minipage}
 \hfill
   \begin{minipage}{0.18\linewidth}
 \centering
 \begin{tikzpicture}[node distance={15mm}, thick, main/.style = {draw, circle}]
 		\node[main] (0) {}; 
 		\node[main] (1) [below of=0] {};  
 		\node[main] (2) [right of=0] {};
 		\node[main] (3) [below of=2] {};
	
 		\draw[dashed] (0) -- (1);
 		\draw (0) -- (3);
 		\draw[dashed] (0) -- (2);	
            \draw (1) -- (2);
 		\draw (1) -- (3);
 		\draw[dashed] (2) -- (3);
 						
 	\end{tikzpicture} 
  \subcaption*{$\mathcal{G}_6$}
	\end{minipage} 
 \hfill
     \begin{minipage}{0.18\linewidth}
 \centering
 \begin{tikzpicture}[node distance={15mm}, thick, main/.style = {draw, circle}]
 		\node[main] (0) {}; 
 		\node[main] (1) [below of=0] {};  
 		\node[main] (2) [right of=0] {};
 		\node[main] (3) [below of=2] {};
	
 		\draw (0) -- (1);
 		\draw[dashed] (0) -- (3);
 		\draw[dashed] (0) -- (2);	
            \draw (1) -- (2);
 		\draw (1) -- (3);
 		\draw[dashed] (2) -- (3);
 						
 	\end{tikzpicture} 
  \subcaption*{$\mathcal{G}_7$}
	\end{minipage}
 \hfill  
  \begin{minipage}{0.18\linewidth}
 \centering
 \begin{tikzpicture}[node distance={15mm}, thick, main/.style = {draw, circle}]
 		\node[main] (0) {}; 
 		\node[main] (1) [below of=0] {};  
 		\node[main] (2) [right of=0] {};
 		\node[main] (3) [below of=2] {};
	
 		\draw (0) -- (1);
 		\draw[dashed] (0) -- (3);
 		\draw (0) -- (2);	
            \draw (1) -- (2);
 		\draw[dashed] (1) -- (3);
 		\draw (2) -- (3);
 						
 	\end{tikzpicture} 
  \subcaption*{$\mathcal{G}_8$}
	\end{minipage}
 \hfill
  \begin{minipage}{0.18\linewidth}
 \centering
 \begin{tikzpicture}[node distance={15mm}, thick, main/.style = {draw, circle}]
 		\node[main] (0) {}; 
 		\node[main] (1) [below of=0] {};  
 		\node[main] (2) [right of=0] {};
 		\node[main] (3) [below of=2] {};
	
 		\draw (0) -- (1);
 		\draw[dashed] (0) -- (3);
 		\draw (0) -- (2);	
            \draw (1) -- (2);
 		\draw (1) -- (3);
 		\draw (2) -- (3);
 						
 	\end{tikzpicture} 
  \subcaption*{$\mathcal{G}_{10}$}
	\end{minipage}
	\caption{Class graphs on four vertices for which $\Gamma(\mathcal{G})$ is defined to be a restricted $Y$-class. Numbering is preserved from \cite{yu2009computing}. Recall from Definition \ref{def:alpha(G) and omega(G)} that solid lines identify the edges in the set $E_S$ and dashed lines denote the edges in $E_B$, and for every class graph $\mathcal{G}_i$ the minimal and maximal graphs (under inclusion) in $\Gamma(\mathcal{G}_i)$ are given by $\alpha(\mathcal{G}_i)=(V,E_S)$ and $\omega(\mathcal{G}_i)=(V,E_S \cup E_B)$.}
	\label{fig:four_vertices_class_graphs}
\end{figure}
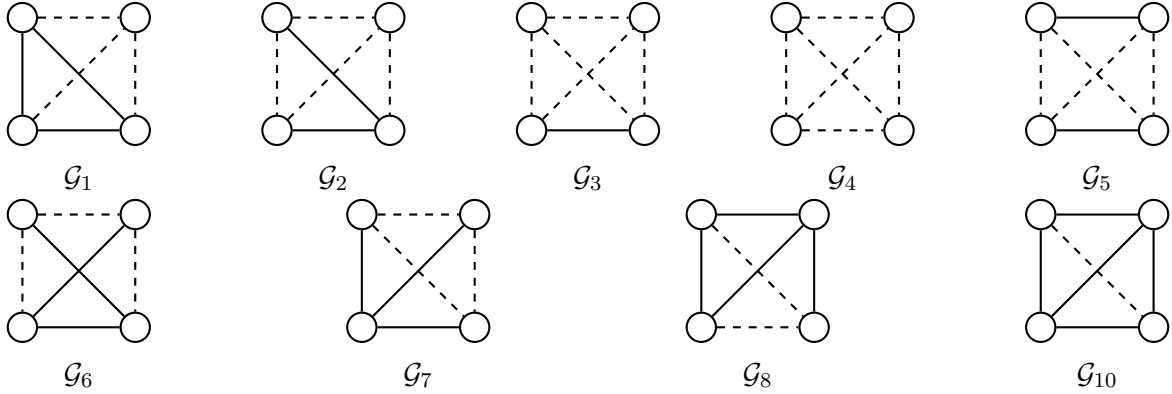

\begin{definition}
        Let $\mathcal{G}=(V, E_S, E_B)$ be a class graph, and let $V' \subseteq V$. The \emph{full class subgraph of $\mathcal{G}$ on $V'$} is $\mathcal{G}\vert_{V'} = \left(V', E_S \cap {V' \choose 2}, E_B \cap {V' \choose 2}\right).$ The \emph{contraction of $\mathcal{G}$ along $V'$} is the class graph $\mathcal{G} / V' = (V / V', E_S^{V'}, E_B^{V'})$ where, writing $v'$ for the element of the quotient set $V / V'$ corresponding to $V',$ we have   
        \begin{align*}
            E_S^{\mathcal{G}/V'} = \left(E_S \cap {V \setminus V' \choose 2}\right) \cup \{\{v, v'\} \; : &\; \{v, w\} \in E_S \text{ for some } w \in V'\} \\
            E_B^{\mathcal{G}/V'}=\left(E_B \cap {V \setminus V' \choose 2}\right) \cup \{\{v, v'\} \;:\; & \{v, w\} \in E_B \text{ for some } w \in V', \\
            &\text{ and, for all } w \in V', \{v,w\}\not\in E_S\}.
        \end{align*}
\end{definition}

Finally, we can define

\begin{definition}[restricted $Y$-class, c.f. \cite{yu2009computing}]
\label{def:restrictedY}
	Let $\mathcal{G} = (V, E_S, E_B)$ be a class graph and $\Gamma(\mathcal{G})$ the graphs of class $\mathcal{G}.$    
    Then $\Gamma(\mathcal{G})$ is \emph{restricted $Y$-class} if $\mathcal{G}$ can be constructed according to the following two rules:
	\begin{itemize}
		\item[R1] $\mathcal{G}$ is a class graph on three vertices, or one of the complete class graphs on four vertices pictured in  Figure \ref{fig:four_vertices_class_graphs}, or
		\item[R2] Take $V^1, \dots, V^k \subseteq V$ so that $V^i \cap V^j = \varnothing$ if $i \neq j,$ and so that for each $i=1, \dots, k,$  $\Gamma(\mathcal{G}|_{V^i})$ is a restricted $Y$-class. Write $\phi(\mathcal{G})$ for the class graph obtained by sequentially contracting $\mathcal{G}$ along each $V^i$. If $\alpha(\phi(\mathcal{G}))$ is a tree and $\omega(\phi(\mathcal{G}))$ is a complete graph, then $\Gamma(\mathcal{G})$ is a restricted $Y$-class, where $\alpha(\phi(\mathcal{G}))$ and $\omega(\phi(\mathcal{G}))$ are as in Definition \ref{def:alpha(G) and omega(G)}.
	\end{itemize} 
\end{definition}

Our definition of restricted $Y$-classes involves a subset of the rules for producing \emph{$Y$-graphs} in \cite{yu2009computing}, so every restricted $Y$-class is a $Y$-graph. Thus, we obtain the following corollary to \cite[Corollary 18]{yu2009computing}.
\begin{cor}\label{cor:prob_Y}
    Suppose $\mathcal{G} = (V, E_S, E_B)$ is a class graph so that $\alpha(\mathcal{G})$ is a tree and $\Gamma(\mathcal{G})$ is a restricted $Y$-class. Let $G \sim G(n, r, A)$ be a geometric random graph in $\mathcal{G}$. The probability that there is some full subgraph of $H \subseteq G$ with $|V|$ vertices so that $H \cong K$ for some $K \in \Gamma(\mathcal{G})$ is $(\pi r^2 / A)^{|V|-1}.$
\end{cor}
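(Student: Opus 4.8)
The plan is to deduce the statement from \cite[Corollary 18]{yu2009computing} together with a short combinatorial translation. First I would check, straight from Definition \ref{def:restrictedY}, that every restricted $Y$-class is a $Y$-graph in the sense of \cite{yu2009computing}: rules R1 and R2 are a sub-collection of the rules by which $Y$-graphs are assembled there, so the hypotheses of \cite[Corollary 18]{yu2009computing} hold for $\Gamma(\mathcal{G})$. That corollary evaluates, for $G \sim G(n,r,A)$, the probability that a fixed $|V|$-element vertex set of $G$ induces a subgraph realizing some member of $\Gamma(\mathcal{G})$; by Definition \ref{def:alpha(G) and omega(G)} this event amounts to the presence of all the mandatory edges, i.e.\ of all edges of $\alpha(\mathcal{G})$. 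The translation is then purely combinatorial: the hypothesis that $\alpha(\mathcal{G}) = (V, E_S)$ is a tree forces $|E_S| = |V| - 1$, and inserting this exponent into Yu's formula gives the asserted probability $(\pi r^2/A)^{|V|-1}$.

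For a self-contained derivation of the probability that all edges of $\alpha(\mathcal{G})$ appear, I would extend Lemma \ref{lem:RGG_trails_indep} from paths to trees. Root $\alpha(\mathcal{G})$ and order its vertices $v_1, \dots, v_{|V|}$ so that each $v_j$, $j \geq 2$, has its parent among $v_1, \dots, v_{j-1}$; the tree then has exactly one edge per index $j \geq 2$, joining $v_j$ to its parent. Conditioning on the positions of $v_1, \dots, v_{j-1}$, homogeneity of the torus makes $v_j$ fall within distance $r$ of its parent with probability $\pi r^2/A$, independently of the earlier positions; by the chain rule the $|V|-1$ mandatory edge events are mutually independent, each of probability $\pi r^2/A$, so their conjunction has probability $(\pi r^2/A)^{|V|-1}$. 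When $\mathcal{G}$ is a complete class graph there are no further constraints and this already proves the corollary; in the general case one must also account for the absence of edges outside $E_S \cup E_B$, which is precisely the content of the $Y$-graph argument in \cite{yu2009computing}.

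The main obstacle is exactly that last point. Edge events in a random geometric graph are not mutually independent once one mixes positive and negative constraints --- requiring some distances to be at most $r$ while requiring others to exceed $r$ introduces correlations, in the same way that the three sides of a triangle are correlated through the triangle inequality --- so one cannot simply multiply Bernoulli$(\pi r^2/A)$ factors in general. The restricted $Y$-class structure is what rescues the computation: it organizes the mandatory edges into independent blocks glued along a tree, and the recursion (with the conditions that $\alpha(\phi(\mathcal{G}))$ be a tree and $\omega(\phi(\mathcal{G}))$ be complete) is arranged so that the forbidden non-edges never obstruct the factorization. Verifying that the tree hypothesis survives restriction to the blocks $\mathcal{G}|_{V^i}$ and passage to the contraction $\phi(\mathcal{G})$ is the bookkeeping carried out in \cite{yu2009computing}, which is why the cleanest route is to quote \cite[Corollary 18]{yu2009computing} and merely supply the exponent count above.
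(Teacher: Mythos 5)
Your proposal is correct and matches the paper's own (very terse) argument: the paper simply observes that every restricted $Y$-class is a $Y$-graph in Yu's sense and then invokes \cite[Corollary 18]{yu2009computing}, exactly as you do, with the exponent $|V|-1$ supplied by the tree hypothesis on $\alpha(\mathcal{G})$. One small imprecision worth noting: in your first paragraph you say that membership of the induced subgraph in $\Gamma(\mathcal{G})$ ``amounts to the presence of all the mandatory edges,'' which overlooks the simultaneous requirement that edges outside $E_S \cup E_B$ be absent; Yu's result asserts only that the \emph{probability} of the full event equals the probability of the positive constraints alone, which you do correctly recognize and explain in your third paragraph.
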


In order to count eulerian magnitude homology classes, we are interested in certain class graphs $\mathcal{G}$ for which graphs of the type pictured in Figure \ref{fig:subgraph_emh} appear in $\Gamma(\mathcal{G}).$

\begin{lemma}
	\label{lem:Ygraph}
	Let $G$ be a graph and $k \geq 2$. Suppose $\bar{x}\in ET_{k,k}(G)$ is a  $k$-trail in $G$ for which $\partial_{k,k}\bar{x}=0.$ Let $H(\bar{x})$ be the graph from the proof of Lemma \ref{lem:vanishing threshold_single tuple}. Then there is a class graph $\mathcal{G}$ such that $G \in \mathcal{G}$, $\alpha(\mathcal{G})$ is a tree, $\Gamma(\mathcal{G})$ is a restricted $Y$-class, and $H \cong K$ for some $K \in \Gamma(\mathcal{G}).$
\end{lemma}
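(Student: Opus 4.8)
The plan is to exhibit the class graph $\mathcal{G}$ explicitly and then verify the ``restricted $Y$-class'' condition of Definition \ref{def:restrictedY} by an induction on $k$ that exploits the fact that $H = H(\bar{x})$ is a chain of triangles glued along edges.

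First I would unwind the shape of $H(\bar{x})$. By Lemma \ref{lem:class_graph_one_gen} and the construction in the proof of Lemma \ref{lem:vanishing threshold_single tuple}, $H = \alpha(\mathcal{H}(\{\bar{x}\}))$ has vertex set $L(\bar{x}) = \{x_0,\dots,x_k\}$ and edge set $\{\{x_i,x_{i+1}\}\}_{i\in[k-1]_0}\cup\{\{x_{i-1},x_{i+1}\}\}_{i\in[k-1]}$; equivalently $H$ is the union of the triangles $H\vert_{\{x_{i-1},x_i,x_{i+1}\}}$, $i\in[k-1]$, with consecutive triangles sharing exactly the edge $\{x_i,x_{i+1}\}$. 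I would then take $\mathcal{G} = (L(\bar{x}), E_S, E_B)$ to be the complete class graph with $E_S = \{\{x_i,x_{i+1}\}\}_{i\in[k-1]_0}$, the Hamiltonian path on $x_0,\dots,x_k$, and $E_B = \binom{L(\bar{x})}{2}\setminus E_S$. With this choice $\alpha(\mathcal{G})$ is a path and hence a tree, $\omega(\mathcal{G}) = \Delta_{k+1}$ is complete, $H\in\Gamma(\mathcal{G})$ since $E_S\subseteq E(H)\subseteq\binom{L(\bar{x})}{2}$, and $G\vert_{L(\bar{x})}\in\Gamma(\mathcal{G})$: indeed Lemma \ref{lem:class_graph_one_gen} already gives $G\vert_{L(\bar{x})}\in\Gamma(\mathcal{H}(\{\bar{x}\}))$, so $G\vert_{L(\bar{x})}$ contains every path edge and trivially has no edge outside $\binom{L(\bar{x})}{2}$. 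This is the content of ``$G\in\mathcal{G}$''; in particular $G$ has a full subgraph lying in $\Gamma(\mathcal{G})$.

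The remaining, and essentially only, work is to prove that $\Gamma(\mathcal{G})$ is a restricted $Y$-class, by induction on $k$ using the two rules of Definition \ref{def:restrictedY}. For $k=2$, $\mathcal{G}$ is a class graph on three vertices, so rule R1 applies. For $k=3$, $\mathcal{G}$ is, after relabeling its vertices, exactly the complete four-vertex class graph whose mandatory edges form a Hamiltonian path, namely $\mathcal{G}_6$ of Figure \ref{fig:four_vertices_class_graphs}, so rule R1 applies again. For $k\geq 4$ I would set $V^1 = \{x_0,\dots,x_{k-2}\}$ and observe that $\mathcal{G}\vert_{V^1}$ is the same construction for the index $k-2$ (a complete class graph on $k-1$ vertices whose mandatory edges form the path $x_0-\cdots-x_{k-2}$), so $\Gamma(\mathcal{G}\vert_{V^1})$ is a restricted $Y$-class by the inductive hypothesis. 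Contracting $\mathcal{G}$ along $V^1$ produces a class graph $\phi(\mathcal{G})$ on the three vertices $\{v',x_{k-1},x_k\}$, with mandatory edges $\{v',x_{k-1}\}$ (inherited from $\{x_{k-2},x_{k-1}\}\in E_S$) and $\{x_{k-1},x_k\}$, and optional edge $\{v',x_k\}$ (no mandatory edge joins $x_k$ to $V^1$, but $\{x_{k-2},x_k\}\in E_B$). Hence $\alpha(\phi(\mathcal{G}))$ is a path on three vertices, a tree, and $\omega(\phi(\mathcal{G})) = \Delta_3$ is complete, so rule R2 applies and $\Gamma(\mathcal{G})$ is a restricted $Y$-class. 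Taking $K = H$ then completes the argument.

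I expect the only genuinely delicate point to be the $k=3$ base case: since R1 admits only the ten specific complete four-vertex class graphs drawn in Figure \ref{fig:four_vertices_class_graphs}, one must verify that our $\mathcal{G}$ for $k=3$ really is isomorphic as a class graph to one of them ($\mathcal{G}_6$), rather than just being ``some complete class graph on four vertices.'' Everything else — the identification of $H$ as a triangle chain, and the computation of the mandatory and optional edge sets of $\phi(\mathcal{G})$ against the contraction definition preceding Definition \ref{def:restrictedY} — is routine bookkeeping.
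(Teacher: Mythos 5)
Your proof is correct and takes a genuinely different, cleaner route than the paper's. The paper constructs a class graph $\mathcal{H}$ whose mandatory edge set is the full $(2k-1)$-edge graph $H(\bar{x})$ (Hamiltonian path together with all skip diagonals $\{x_{i-1},x_{i+1}\}$), and then verifies the restricted $Y$-class condition via a case analysis on $k$ modulo $4$, contracting disjoint blocks of four vertices (with special handling of leftover vertices in each residue class). You instead take $E_S$ to be only the Hamiltonian path and declare every other pair optional. This choice trivializes two of the four requirements at once: $\alpha(\mathcal{G})$ is a path, hence literally a tree, and $\omega(\mathcal{G}) = \Delta_{k+1}$, so the contraction bookkeeping for rule R2 is elementary. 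Your induction step $k \mapsto k-2$, contracting along the single set $V^1 = \{x_0,\dots,x_{k-2}\}$ to leave the three-vertex path class graph on $\{v', x_{k-1}, x_k\}$, replaces the paper's four-way case split. Your computation of $\phi(\mathcal{G})$ against the contraction definition is correct (in particular, $\{v',x_{k-1}\}$ is forced into $E_S^{\phi}$ and excluded from $E_B^{\phi}$ because $\{x_{k-2},x_{k-1}\}\in E_S$), and your identification of the $k=3$ base case with $\mathcal{G}_6$ is right (the paper instead invoked $\mathcal{G}_7$; both have tree-shaped minimal elements).

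One point where your version is not merely simpler but arguably more careful: the lemma's conclusion explicitly asserts that $\alpha(\mathcal{G})$ is a tree, a hypothesis that is then required when invoking Corollary \ref{cor:prob_Y}. For $k\geq 4$ the paper's construction sets $E_S^{\mathcal{H}} = E(H(\bar{x}))$, so $\alpha(\mathcal{H}) = H(\bar{x})$ itself, which has $2k-1 > k$ edges on $k+1$ vertices and is therefore not a tree; only the contracted $\phi(\mathcal{H})$ is. Your construction delivers the stated tree property directly, which is the cleaner way to set up the downstream probability estimate.
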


\begin{proof} 
    Let $\bar{x} = (x_0, x_1,\dots x_k)$ and recall that $H(\bar{x})$ is the graph on vertices $V = \{x_0, \dots, x_k\}$ with edges $E_H = \{\{x_{i-1},x_{i+1}\}\}_{i=0}^{k-1} \cup \{\{x_{i-1}, x_{i+1}\}_{i=1}^{k-1},$ as illustrated in Figure \ref{fig:subgraph_emh} for $k=5.$	We will proceed to build the required restricted $Y$-class containing $H(\bar{x})$ by (very similar) cases in $k$.
 
    If $k=2,$ $H$ is the complete graph on three vertices. This is a member of the restricted $Y$-class $\Gamma\left((\{x_0, x_1, x_2\}, \{\{x_0, x_1\}, \{x_0, x_2\}\}, \{\{x_1, x_2\}\})\right),$ whose minimal element is a tree. 
    
    If $k=3,$ $H$ is a graph on four vertices with five edges, and so is isomorphic to a graph in each of the $\Gamma(\mathcal{G}_i)$ for $\mathcal{G}_i$ found in Figure \ref{fig:four_vertices_class_graphs}. In particular, if we take $\Gamma(\mathcal{G}_7),$ the minimal element is a tree, and we have constructed the required class.
	
	Assume now that $k\geq 4$. In each case, we will partition the vertices of $H(\bar{x})$ to construct the required restricted $Y$ class using $R2$ of Definition \ref{def:restrictedY}. 
 
    If $k = 4\ell + 1$ for some $\ell > 1,$ let $V^i = \{v_{1+4i}, v_{2+4i}, v_{3+4i}, v_{4+4i}\},$ $i=0, \dots, \ell-1,$ so the $V^i$ are disjoint and all vertices but $v_0$ and $v_k$ are contained in some $V^i.$ Each of the subgraphs $H(\bar{x})|_{V^i}$ is again a graph on four vertices with five edges. Define class graphs $$\mathcal{H}_i = \left(V^i, E_S^i = {V^i \choose 2} \setminus \{\{v_{2+4i}, v_{4+4i}\}\}, E_B^i = \{\{v_{2+4i}, v_{4+4i}\}\}\right)$$ and observe that $H(\bar{x})|_{V^i} = \alpha(\mathcal{H}_i),$ as in Figure \ref{fig:construction_Ygraph}(a). Define $\mathcal{H}$ to be the class graph $\mathcal{H} = (V, E_S^\mathcal{H} = E_H, E_B^\mathcal{H} = {V \choose 2} \setminus E_H),$ so again $H(\bar{x}) = \alpha(\mathcal{H}),$ and $\omega(\mathcal{H})$ is the complete graph on $V.$ Contracting $\mathcal{H}$ along each of the $\mathcal{H}_i$ sequentially, and calling the vertices corresponding to $\mathcal{H}_i$ in the contraction by $h_i,$ we obtain the new class graph $\phi(\mathcal{H}) =  (V^\phi, E_S^\phi, E_B^\phi)$ with \begin{align*}
    V^\phi &= \{v_0, h_1, \dots, h_\ell, v_{k}\}, \\
    E_S^\phi &= \{\{v_0, h_1\}, \{h_\ell, v_k\}\} \cup \{\{h_i, h_{i+1}\}_{i=1}^{\ell-1}, \\
    E_B^\phi &= {V^\phi \choose 2} \setminus E_S^\phi.
    \end{align*}
    Here, $\alpha(\phi(\mathcal{H}))$ is a line graph, thus a tree, and $\omega(\phi(\mathcal{H}))$ is a complete graph, as required. Thus, $\mathcal{H}$ is a restricted $Y$-class.
   	
	\begin{figure}[t]
		\begin{minipage}{\linewidth}
			\begin{minipage}{0.45\linewidth}
				\centering
				\begin{tikzpicture}[node distance={15mm}, thick, main/.style = {draw, circle}]
					\node[main] (0) {$x_0$}; 
					\node[main] (1) [above right of=0] {$x_1$};  
					\node[main] (2) [below right of=1] {$x_2$};
					\node[main] (3) [above right of=2] {$x_3$};
					\node[main] (4) [below right of=3] {$x_4$};
					\node[main] (5) [above right of=4] {$x_5$};
					\draw (0) -- (1);
					\draw[red] (1) -- (2);
					\draw[red] (2) -- (3);
					\draw[red] (3) -- (4);
					\draw (4) -- (5);
					
					\draw (0) -- (2);
					\draw[red] (1) -- (3);
					\draw[red] (2) -- (4);
					\draw (3) -- (5);
                    \draw[dashed, red] (1) -- (4);
				\end{tikzpicture} 
				\subcaption{In red, class graph  $\mathcal{H}_1$ visualized inside $H(\bar{x}).$ To expand this to the class graph $\mathcal{H},$ we add all non-pictured edges to the set $E_B^\mathcal{H}.$ }
			\end{minipage}
			\hfill
			\begin{minipage}{0.45\linewidth}
				\centering
				\begin{tikzpicture}[node distance={15mm}, thick, main/.style = {draw, circle}]
					
					\node[main] (0) {$x_0$};
					\node[main] (1) [above right=15mm of 0] {$x_1$};
					\node[main, below= 1mm of 1] (4) {$x_4$};
					\node[main, below left= -1mm and 3mm of 1] (2) {$x_2$};
					\node[main, below right=-1mm and 3mm of 1] (3) {$x_3$};
					
					\node[circle, draw=red, fit=(1) (2) (3) (4), inner sep=-1mm] (all) {};
					
					\node[main] (5) [right=5mm of all] {$x_5$};
					
					\draw (0) -- (all);
                    \path[every node/.style={font=\sffamily\small}]
			(0) edge [bend right=50, dashed] node {} (5);
				    \draw (all) -- (5);				
				\end{tikzpicture}
				\subcaption{The contraction $\varphi(\mathcal{H})$. Here the large red circle is the vertex $h_1,$ representing the quotient by the set of original vertices $\{x_1, \dots, x_4\}.$ supporting $\mathcal{H}_1$.}
			\end{minipage}
		\end{minipage}
		\caption{Demonstration of construction of restricted $Y$-class containing $H(\bar{x})$ in the proof of Lemma \ref{lem:Ygraph}.}
		\label{fig:construction_Ygraph}
	\end{figure}
	
    The other cases follow from this same framework with slight modifications. When $k = 4\ell,$ we obtain a partition of all vertices besides $v_0$ into sets of four vertices and obtain $\phi(\mathcal{H})$ as before, but missing vertex $v_k.$ When $k = 4\ell + 3$, we partition the vertices $\{v_1, \dots, v_{4\ell -1}\}$ as before into sets $V^1, \dots V^{\ell -2}$ with four vertices. We then add the set $W^{\ell-1} = \{v_{4\ell+1}, v_{4\ell+2}, v_{4\ell+3}\},$ and observe that $H(\bar{x})|_{W^{\ell-1}}$ is a complete graph on three vertices, so as in the $k=2$ case we can take the corresponding singleton restricted $Y$-class. The construction of $\mathcal{H}$ now proceeds as before, again omitting the vertex $v_k.$ When $k = 4\ell+2,$ we shift the partition so we have $V^0 = \{v_0, v_1, v_2\},$ $V^i = \{v_{-1+4i}, v_{4i}, v_{1+4i}, v_{2+4i}\} ,i=1, \dots, \ell,$ and proceed as before.	
\end{proof}

We are now able to show the following.

\begin{theorem}
\label{thm:vanishing threshold RGG}
Fix $k$ and $q>\frac{k+1}{2k}.$ Then as $n\to \infty,$ \[\E[|EMH_{k,k}(G(n, n^{-q}, A)|] \to 0.\]
\end{theorem}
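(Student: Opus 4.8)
The plan is to mirror the structure of the Erd\H{o}s--R\'enyi argument from Section~\ref{sec:EMH_ER}, replacing the edge-independence computations with the RGG tools (Lemma~\ref{lem:RGG_trails_indep} and Corollary~\ref{cor:prob_Y}). By Corollary~\ref{cor:homisker} we have $EMH_{k,k}(G) \cong \ker(\partial_{k,k})$, and by Theorem~\ref{thm:cycle_decomp} together with Corollary~\ref{cor:cycle_spanning_set}, any cycle in $EMH_{k,k}(G)$ is spanned by cycles minimally supported on local, compatible collections $X$ whose structure graph $s(X)$ is either a single vertex, a clique-tree, or an even cycle $C_d$. So it suffices to show that for each such class of supporting collection, the expected number of copies of the associated minimal graph $\alpha(\mathcal{H}(X))$ in $G(n, n^{-q}, A)$ tends to $0$ when $q > \frac{k+1}{2k}$.

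First I would treat the single-tuple case: given $\bar{x} \in ET_{k,k}(G)$ with $\partial_{k,k}\bar{x} = 0$, the associated graph $H(\bar{x}) = \alpha(\mathcal{H}(\{\bar{x}\}))$ is the graph appearing in Lemma~\ref{lem:vanishing threshold_single tuple}. By Lemma~\ref{lem:Ygraph}, there is a class graph $\mathcal{G}$ with $\alpha(\mathcal{G})$ a tree and $\Gamma(\mathcal{G})$ a restricted $Y$-class such that $H(\bar{x}) \cong K$ for some $K \in \Gamma(\mathcal{G})$. Since $H(\bar{x})$ has $k+1$ vertices, Corollary~\ref{cor:prob_Y} gives that the probability $G(n, n^{-q}, A)$ contains such a subgraph on a fixed vertex set of size $k+1$ is $(\pi r^2/A)^{k} = (\pi/A)^k n^{-2qk}$. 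Bounding the number of copies of $H(\bar{x})$ on any $(k+1)$-subset by $c(\Delta_{k+1}, H_k)$ as in Lemma~\ref{lem:vanishing threshold_single tuple}, we get
\[
\E[c(G(n,n^{-q},A), H_k)] \leq \binom{n}{k+1} c(\Delta_{k+1}, H_k)\left(\frac{\pi}{A}\right)^k n^{-2qk} \sim C\, n^{(k+1) - 2qk},
\]
which vanishes precisely when $q > \frac{k+1}{2k}$.

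Next I would handle the collections $X$ with $|X| = m \geq 2$ whose structure graph is a clique-tree or an even cycle, following the bookkeeping in the proof of Lemma~\ref{lem:vanishing threshold_combination tuples}. The minimal graph $\alpha(\mathcal{H}(X))$ has $k + m - 1$ vertices (in the cycle case; $k + y + 1$ in the clique-tree case, with $y$ the number of added vertices), and the key point is that it is again a restricted $Y$-class with tree-shaped minimal element --- this should follow from Lemma~\ref{lem:Ygraph} applied blockwise, contracting each newly-adjoined vertex's local neighborhood (which is again a small $Y$-class) and checking that the resulting contracted class graph has a tree for its $\alpha$ and a complete graph for its $\omega$, exactly as in rule R2 of Definition~\ref{def:restrictedY}. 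Then Corollary~\ref{cor:prob_Y} gives probability $(\pi r^2 / A)^{(k+m-1)-1} = (\pi/A)^{k+m-2} n^{-2q(k+m-2)}$ of such a subgraph on a fixed vertex set, and the expected count is bounded by $\binom{n}{k+m-1} c(\Delta_{k+m-1}, \alpha(\mathcal{H}(X))) (\pi/A)^{k+m-2} n^{-2q(k+m-2)} \sim C' n^{(k+m-1) - 2q(k+m-2)}$, which vanishes when $q > \frac{k+m-1}{2(k+m-2)}$; since $\frac{k+m-1}{2(k+m-2)}$ is decreasing in $m$ and equals $\frac{k+1}{2k}$ at $m=2$ (and $< \frac{k+1}{2k}$ for the clique-tree vertex counts as well), the threshold $q > \frac{k+1}{2k}$ dominates all these cases. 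Summing the finitely many bounds over the bounded number of isomorphism types of supporting collections on a given vertex count gives $\E[\beta_{k,k}(n, n^{-q}, A)] \to 0$.

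The main obstacle I anticipate is the middle paragraph: verifying rigorously that $\alpha(\mathcal{H}(X))$ for a general clique-tree or even-cycle supporting collection really is (isomorphic to a graph in) a restricted $Y$-class with tree-shaped $\alpha$. Lemma~\ref{lem:Ygraph} only handles the single-tuple graph $H(\bar{x})$; the multi-trail graphs have extra vertices and a more tangled edge structure near the landmarks where trails diverge (cf. Figures~\ref{fig:other kernel EMH_2,2}, \ref{fig:even cycle minimally supported on Y}), and one must confirm that the local pieces around each branching vertex fit the $Y$-class recursion and that the contraction yields a tree rather than a graph with a cycle. If a clean $Y$-class argument proves elusive for some configurations, a fallback is to bound the relevant subgraph probabilities directly: $\alpha(\mathcal{H}(X))$ contains a spanning subgraph that is a disjoint-edge-sharing union of $k$-paths and their ``diagonal'' edges meeting only at endpoints, and Lemma~\ref{lem:RGG_trails_indep} plus induction à la Corollary~\ref{cor:prob_E2} already gives an upper bound of $(\pi r^2/A)^{|E_S^X|}$ on the probability, with $|E_S^X|$ at least $2k+2m-1$ (resp. $2k+2y-1$) as computed in Lemma~\ref{lem:vanishing threshold_combination tuples}; this weaker but sufficient bound yields the same $q$-threshold without needing the full $Y$-class machinery.
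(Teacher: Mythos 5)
Your architecture matches the paper's: reduce to cycles minimally supported on local, compatible collections via Theorem~\ref{thm:cycle_decomp} and Corollary~\ref{cor:cycle_spanning_set}, bound the single-tuple case using Lemma~\ref{lem:Ygraph} and Corollary~\ref{cor:prob_Y}, and argue that multi-trail configurations give a dominated threshold. The single-tuple computation is essentially identical to the paper's and is correct. For the multi-trail case, the paper does precisely what you label your ``main path'': it verifies, by cases on $k$, that a restricted $Y$-class with tree-shaped minimal element can be built for the multi-trail subgraphs by successively contracting small $Y$-subclasses. You correctly flag this verification as the nontrivial step, but leave it unfinished.

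The fallback you offer in the last paragraph is not sound. Bounding the probability of observing $\alpha(\mathcal{H}(X))$ on a fixed vertex set by $(\pi r^2/|A|)^{|E_S^X|}$ via ``Lemma~\ref{lem:RGG_trails_indep} plus induction \`a la Corollary~\ref{cor:prob_E2}'' implicitly treats the appearance of all edges in $E_S^X$ as jointly independent, as in the Erd\H{o}s--R\'enyi case (Lemma~\ref{lem:prob_class_ER}). In the geometric model they are not: the independence results imported from~\cite{yu2009computing} apply only to pairs of edges sharing at most one vertex, and hence to paths. The moment $E_S^X$ contains a triangle $\{x_{i-1},x_i\},\{x_i,x_{i+1}\},\{x_{i-1},x_{i+1}\}$ --- which happens as soon as some $\partial^i_{k,k}$-term vanishes --- the third edge is positively correlated with the other two conditionally on the geometry, and the naive product underestimates the probability. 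This is exactly the obstruction the $Y$-class machinery is built to sidestep: it charges only for a spanning tree, yielding the exponent $|V|-1$ rather than $|E_S^X|$, and hence the threshold $\frac{k+1}{2k}$ rather than the smaller (and unprovable by this route) $\frac{k+1}{2(2k-1)}$ that an edge-count would suggest. Your description of the edge-count bound as ``weaker'' also has the direction reversed: it is a tighter upper bound than the tree bound, which is precisely why it is false here. A legitimate elementary fallback, avoiding the full $Y$-class verification, is simply to note that the (connected) graph $\alpha(\mathcal{H}(X))$ contains a spanning tree on its $|V|$ vertices, so the probability of seeing it on a fixed $|V|$-set is at most $(\pi r^2/|A|)^{|V|-1}$ by a path-style induction; this gives the paper's exponent and threshold directly.
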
 

\begin{proof}
Let $G \sim G(n, n^{-q}, A),$ and suppose $\bar{x} = (x_0,\dots,x_k)\in ET_{k,k}(G)$ is a  $k$-trail in $G,$ for which $\partial_{k,k} \bar{x} = 0.$ Let $\mathcal{H}$ be the resulting restricted $Y$-class obtained in Lemma \ref{lem:Ygraph}.
By Corollary \ref{cor:prob_Y}, the probability of finding any subgraph $H\subseteq G|_{\{v_0, \dots, v_k\}}$ isomorphic to an element of $\Gamma(\mathcal{H})$ is $\left(\frac{\pi r^2}{|A|} \right)^{(k+1)-1 } = \left(\frac{\pi r^2}{|A|} \right)^{k}$. Thus, this is an upper bound for the probability of observing a graph isomorphic to $H_k$ on any subgraph on $(k+1)$ vertices. Proceeding similarly to the proof of Lemma \ref{lem:vanishing threshold_single tuple} for ER graphs, call $N_{H_k}(G(n, n^{-q},A))$ the number of subgraphs isomorphic to $H_k$ expected to appear in a graph $G\sim G(n, n^{-q},A),$ and $a_{H_k}$ for the largest number of graphs isomorphic to $H_k$ supported on a collection of $(k+1)$ vertices in $G.$ We have

\begin{align*}
	N_{H_k}(G(n, n^{-q}, A)) & \leq \binom{n}{k+1} a_H \left(\frac{\pi r^2}{|A|} \right)^{k} \\
	&\sim  \frac{n^{k+1}}{(k+1)!} a_H\left(\frac{\pi }{|A|} \right)^{k} n^{-2q k} \\
	&= \frac{a_H}{(k+1)!} \left(\frac{\pi }{|A|} \right)^{k} n^{(k+1)-2q k} \xrightarrow{n\to \infty}
	\begin{cases}
		0, \text{ if } q > \frac{k+1}{2k} \\
		\infty, \text{ if } 0 < q < \frac{k+1}{2k}.
	\end{cases}
\end{align*}

As in the ER case, the presence of $H_k$ is necessary to support an element in the kernel of $\partial_{k,k}$. As these vanish, so must $EMH_{k,k}(G).$

As in the Erd\H{o}s-R\'{e}nyi case, the subgraph induced by a combination $\sum_{i=1}^m [(x_0,\dots,x_k)_i] \in EMH_{k,k}(G)$ is less likely to appear then the one induced by a single tuple.
Therefore, showing that there is a restricted $Y$-class associated with linear combinations $\sum_{i=1}^ma_i\bar{x}^i$ suffices to prove that the vanishing threshold obtained from the count of single tuples $(x_0,\dots,x_k)$ generating a homology cycle holds for the whole eulerian magnitude homology group $EMH_{k,k}(G)$. 
To see that this is true, we will proceed similarly as in Lemma \ref{lem:Ygraph} by building the required restricted $Y$-class containing $H(\bar{x})=\Gamma(\mathcal{H}(\bar{x}))$.

Suppose $k=2$. Then $H(\bar{x})$ is the graph on four vertices in the restricted $Y$-class $\Gamma\left((\{x_0, x_1, x_2, x_1'\}, \{\{x_0, x_1\}, \{x_1, x_2\}, \{x_0, x_1'\}, \{x_1', x_2\}\}, \{\{x_1, x_1'\}\})\right),$ which is a restricted $Y$-class as described in rule R1 (contained in the class $\mathcal{G}_4$), and whose minimal element is a tree. 
    
If $k=3$, there are two options for $H(\bar{x})$. Either it is a graph on five vertices with six edges, or it is a graph on six vertices and seven edges, as illustrated in Figure \ref{fig:class graph combination k=3}.
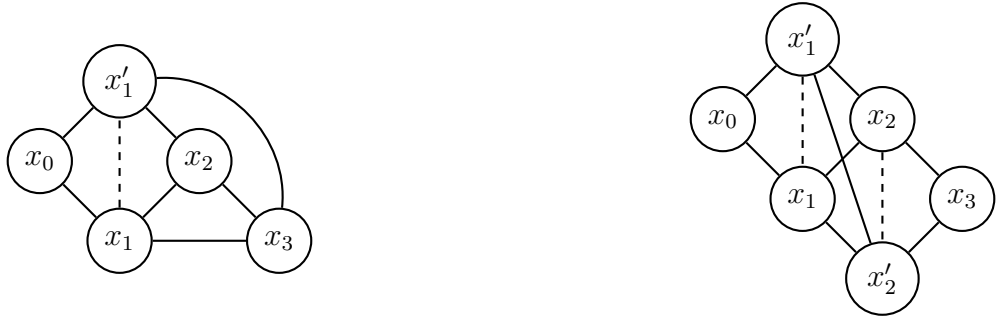
\begin{figure}[h]
	\begin{minipage}{\linewidth}
		\begin{minipage}{0.45\linewidth}		\centering			
		\begin{tikzpicture}[node distance={15mm}, thick, main/.style = {draw, circle,minimum size=0.85cm}]
			\node[main] (0) {$x_0$}; 
			\node[main] (11) [above right of=0] {$x_1'$};  
			\node[main] (2) [below right of=11] {$x_2$};
			\node[main] (1) [below right of=0] {$x_1$};
                \node[main] (3) [below right of=2] {$x_3$};
			
			\draw[dashed] (1) -- (11);
			\draw (0) -- (1);
                \draw (0) -- (11);
			\draw (1) -- (2);
			\draw (2) -- (3);
                \draw (11) -- (2);
			\draw (1) -- (3);

                \path[every node/.style={font=\sffamily\small}]
			(11) edge [bend left=50] node {} (3);
   
		\end{tikzpicture}
	\end{minipage}
    \hfill 
	\begin{minipage}{0.45\linewidth}
        \centering
		\begin{tikzpicture}[node distance={15mm}, thick, main/.style = {draw, circle,minimum size=0.85cm}]
			\node[main] (0) {$x_0$}; 
			\node[main] (11) [above right of=0] {$x_1'$}; 
                \node[main] (1) [below right of=0]{$x_1$};
			\node[main] (2) [below right of=11] {$x_2$};
			\node[main] (3) [below right of=2] {$x_3$};
			\node[main] (22) [below right of=1] {$x_2'$};
			
			\draw (0) -- (1);
                \draw (0) -- (11);
			\draw (1) -- (2);
                \draw (11) -- (2);
			\draw (2) -- (3);
			\draw (1) -- (22);
			\draw (22) -- (3);
                \draw (11) -- (22);
			
			\draw[dashed] (11) -- (1);
			\draw[dashed] (2) -- (22);
			
		\end{tikzpicture} 
	\end{minipage}
  \end{minipage}
		\captionof{figure}{(left) Subgraph $H(\bar{x})$ mandated if $H(\bar{x})$ has five vertices with six edges. (right) Subgraph $H(\bar{x})$ mandated if $H(\bar{x})$ has six vertices and seven edges.}
		\label{fig:class graph combination k=3}
	\end{figure}
In this case, we can partition the vertices of $H(\bar{x})$ to construct the required restricted $Y$ class using $R2$ of Definition \ref{def:restrictedY}.

In general, for $k\geq 4$ the subgraph $H(\bar{x})$ will look like the ones constructed in the proof of Lemma \ref{lem:vanishing threshold_combination tuples}.
Starting from the first induced $3$-subgraph or $4$-subgraph containing $x_0$, it is possible to sequentially contract (disjoint) $3$-vertex $Y$-subgraphs of $H(\bar{x})$ or $4$-vertex $Y$-subgraphs belonging to the $\mathcal{G}_4$ class.
After this process, the class graph $\phi(\mathcal{H}(\bar{x}))$ is obtained by sequentially contracting $\mathcal{H}(\bar{x})$ is such that $\alpha(\phi(\mathcal{H}(\bar{x})))$ is a tree and $\omega(\phi(\mathcal{H}(\bar{x})))$ is a complete graph.
Thus $\mathcal{H}(\bar{x})$ is a restricted $Y$-class.

\end{proof}

\begin{figure}[h]
\centering
\includegraphics[scale=0.65]{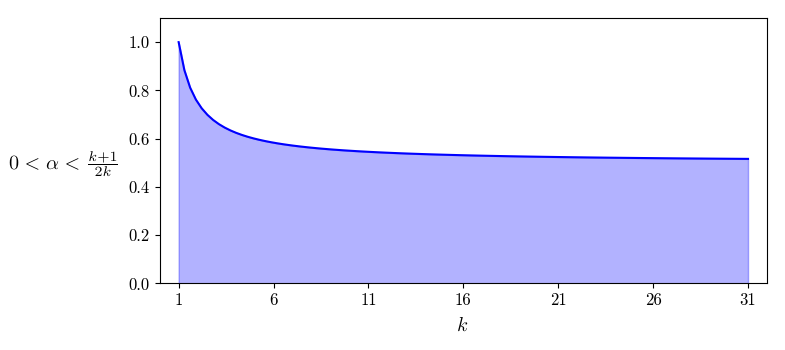}
\caption{The shaded region below the curve is the $q$ vs $k$ region for which we can have  non-vanishing $EMH_{k,k}(G)$ in expectation as $n \to \infty$ for graphs $G\sim G(n,n^{-q}, A)$. By Theorems \ref{thm:relation MH EMH DMH} and \ref{thm:iso MH DMH}, in the non-shaded region asymptotically almost surely $MH_{k,k}(G) \leq DMH_{k,k}(G)$, and if also $k \geq 5,$ then $MH_{k,k}(G) \cong DMH_{k,k}(G)$.}
\label{fig:non trivial emh_rgg}
\end{figure}

Combining this vanishing threshold with Theorem \ref{thm:relation MH EMH DMH}, we obtain the following characterization of the expected behavior of $MH_{k,k}(G(n,r,A))$ as $n \to \infty.$

\begin{cor}
Let $G=G(n,r,A)$ be a random geometric graph and take $r=n^{-q}$.
If $q > \frac{k+1}{2k}$ then the magnitude homology group $MH_{k,k}(G)$ is the subgroup of the discriminant magnitude homology group $DMH_{k,k}(G)$ generated by tuples $(x_0,\dots,x_k)$ such that the induced path always revisits the same edge $(x_i,x_j)$.
\end{cor}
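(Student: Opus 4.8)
The plan is to run the same argument that underlies the analogous Erd\H{o}s-R\'{e}nyi corollary, feeding in the random geometric vanishing threshold of Theorem \ref{thm:vanishing threshold RGG} in place of the Erd\H{o}s-R\'{e}nyi one. First I would invoke Theorem \ref{thm:vanishing threshold RGG} in each degree $j$ with $2 \le j \le k$, obtaining $\E[|EMH_{j,j}(G(n, n^{-q}, A))|] \to 0$ as $n \to \infty$. By Lemma \ref{lem:LowerTriangular} (equivalently Corollary \ref{cor:homisker}) each $EMH_{j,j}(G)$ is free abelian of finite non-negative rank, so Markov's inequality applied to this integer-valued random rank, together with a union bound over the finitely many indices $j \in \{2,\dots,k\}$, promotes this to the statement that, asymptotically almost surely, $EMH_{j,j}(G) \cong \langle 0 \rangle$ for every $2 \le j \le k$ simultaneously.

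Conditioning on that event, the hypotheses of Theorem \ref{thm:relation MH EMH DMH} are met, so every generator of $MH_{k,k}(G) \cong \ker(\partial_{k,k})$ is a $k$-trail of the form $(x_0,x_1,x_0,x_1,\dots)$ visiting only the single pair of adjacent vertices $x_0 \ne x_1$; equivalently, since here $k = \ell$, a tuple $(x_0,\dots,x_k)$ whose unique induced minimal walk never leaves the edge $\{x_0,x_1\}$. Separately, $EMH_{k,k}(G) \cong \langle 0 \rangle$ makes the degree-$k$ segment of the long exact sequence (\ref{eq:LES}) read $0 \to MH_{k,k}(G) \xrightarrow{\pi_*} DMH_{k,k}(G)$, so $\pi_*$ is injective and identifies $MH_{k,k}(G)$ with a subgroup of $DMH_{k,k}(G)$, as already observed in the discussion preceding Theorem \ref{thm:relation MH EMH DMH}. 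Combining the two facts, this subgroup is exactly the one generated by the classes $[\bar x] \in DMH_{k,k}(G)$ of the edge-revisiting tuples $\bar x = (x_0,\dots,x_k)$, which is the asserted description.

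I do not anticipate a genuine obstacle: all of the analytic content is already packaged in Theorem \ref{thm:vanishing threshold RGG}, the structural Theorem \ref{thm:relation MH EMH DMH}, and the long exact sequence. The only points that need any care are that Theorem \ref{thm:relation MH EMH DMH} requires the vanishing of $EMH_{j,j}(G)$ throughout the range $2 \le j \le k$ and not merely in the top degree, so the threshold must be applied degree by degree; and the routine upgrade from ``expected rank $\to 0$'' to ``rank $=0$ asymptotically almost surely'' via Markov's inequality. Both are immediate, and, as with the Erd\H{o}s-R\'{e}nyi corollary, the conclusion is to be read asymptotically almost surely, with $k \ge 2$ assumed throughout so that Theorem \ref{thm:relation MH EMH DMH} applies.
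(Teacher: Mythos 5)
Your approach is the paper's intended one—the corollary is stated with no separate proof, as a direct consequence of Theorem \ref{thm:vanishing threshold RGG} combined with Theorem \ref{thm:relation MH EMH DMH}—and your outline (degree-by-degree vanishing, Markov plus a union bound over the finitely many degrees, invoke the structure theorem, read off the injection $MH_{k,k}\hookrightarrow DMH_{k,k}$ from the long exact sequence with $EMH_{k,k}\cong\langle 0\rangle$) is exactly the chain of reasoning the authors have in mind, and in fact is more explicit than what they record.

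There is, however, a real gap that you flag but do not close. You correctly note that Theorem \ref{thm:relation MH EMH DMH} needs $EMH_{j,j}(G)\cong\langle 0\rangle$ for \emph{every} $2\le j\le k$, and you propose applying Theorem \ref{thm:vanishing threshold RGG} in each degree. But that theorem, applied in degree $j$, requires $q>\frac{j+1}{2j}$, and $\frac{j+1}{2j}=\frac{1}{2}+\frac{1}{2j}$ is \emph{decreasing} in $j$. Consequently the stated hypothesis $q>\frac{k+1}{2k}$ does not imply $q>\frac{j+1}{2j}$ for $j<k$; to get vanishing in every degree from $2$ through $k$ you need the $j=2$ threshold, i.e.\ $q>\frac{3}{4}$. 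Under the hypothesis as written, your argument only delivers $EMH_{k,k}(G)\cong\langle 0\rangle$ and hence the injection $MH_{k,k}(G)\hookrightarrow DMH_{k,k}(G)$, but not the characterization of the generators, which relies on vanishing in the lower degrees. (This issue appears to be present in the corollary as printed in the paper as well, and likewise in the analogous Erd\H{o}s--R\'{e}nyi corollary, where the required threshold would be $q>1$; so your proof faithfully reproduces the paper's implicit argument together with its gap.)
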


Using the same combinatorics, we can again provide estimates of the expected Betti numbers $\beta_{k,k}$ for geometric random graphs as $n \to \infty$, see Figure \ref{fig:expected betti RGG}.

\begin{figure}[H]
	\centering
	\includegraphics[scale=0.65]{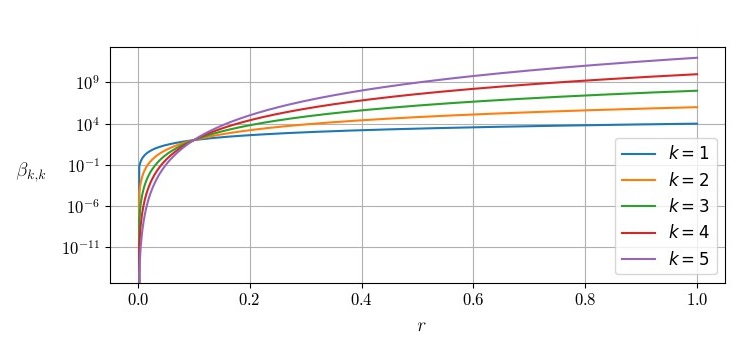}
	\caption{The expected value for the Betti numbers $\beta_{k,k}$, $k \in [1,5]$, of a random geometric graph on a flat torus of area $T_A^2 = \pi$ is plotted vertically against the radius $r$. In this example $n = 100$ and the $y$-axis is on a base 10 logarithmic scale to better visualize the large differences in values. Notice that the the curves all intersect at roughly $p=0.1=n^{-\alpha}$ with $n=100$ and $\alpha=\frac{1}{2} \sim \frac{k+1}{2k}$.}
	\label{fig:expected betti RGG}
\end{figure}

\begin{theorem}
\label{thm:asympBettiRGG}
Fix $r > 0,$ $k$ nonnegative. Write $\beta_{k,k}=\rank(EMH_{k,k}(G(n,r,A)))$. Then
\[
\lim\limits_{n\to \infty} \frac{\mathbb{E}[\beta_{k,k}]}{n^{k+1}r^{2k}} = \left(\frac{\pi}{|A|} \right)^k.
\]
\end{theorem}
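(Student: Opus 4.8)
The plan is to bracket $\beta_{k,k}$ between two counting functions whose expectations are transparent, relying only on rank--nullity together with the vanishing of $EMC_{k+1,k}(G)$ from Lemma~\ref{lem:LowerTriangular}; this mirrors the inequality~(\ref{eq:morselikeineq}) used in the proof of Theorem~\ref{thm:central limit thm}, but is much simpler here because $r$ is held fixed as $n\to\infty$. For $G\sim G(n,r,A)$ write $t_{j,\ell}=|ET_{j,\ell}(G)|$. Since $EMC_{k+1,k}(G)\subseteq MC_{k+1,k}(G)\cong 0$, we have $EMH_{k,k}(G)\cong\ker(\partial_{k,k})$, and rank--nullity for $\partial_{k,k}\colon EMC_{k,k}(G)\to EMC_{k-1,k}(G)$ gives, pointwise,
\[
t_{k,k}-t_{k-1,k}\;\le\;\beta_{k,k}\;\le\;t_{k,k},
\]
because $\rank(\partial_{k,k})\le\dim EMC_{k-1,k}(G)=t_{k-1,k}$. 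Taking expectations and dividing by $n^{k+1}r^{2k}$ reduces the theorem to two estimates.

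First I would compute $\mathbb{E}[t_{k,k}]$ exactly. An eulerian $k$-trail of length $k$ is precisely an ordered $(k+1)$-tuple of distinct vertices every consecutive pair of which is adjacent, so its required edges form a path on $k+1$ vertices; by Lemma~\ref{lem:RGG_trails_indep} these $k$ edges are jointly present with probability $(\pi r^2/|A|)^k$. Summing over the $\tfrac{n!}{(n-k-1)!}$ ordered tuples of distinct vertices gives $\mathbb{E}[t_{k,k}]=\tfrac{n!}{(n-k-1)!}\,(\pi r^2/|A|)^k\sim n^{k+1}(\pi r^2/|A|)^k$, hence $\mathbb{E}[t_{k,k}]/(n^{k+1}r^{2k})\to(\pi/|A|)^k$.

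Second I would observe that $t_{k-1,k}$ is negligible: a $(k-1)$-trail has only $k$ landmarks, so the deterministic bound $t_{k-1,k}\le\tfrac{n!}{(n-k)!}\le n^k$ holds, giving $\mathbb{E}[t_{k-1,k}]/(n^{k+1}r^{2k})\le 1/(n r^{2k})\to 0$ since $r$ is a fixed positive constant. Plugging both estimates into the bracketed inequality and letting $n\to\infty$ squeezes $\mathbb{E}[\beta_{k,k}]/(n^{k+1}r^{2k})$ to $(\pi/|A|)^k$ (the case $k=0$ being immediate, as $EMH_{0,0}(G)\cong\mathbb{Z}^{|V(G)|}$).

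There is essentially no hard step, but it is worth flagging \emph{why} one must argue through $t_{k,k}$ rather than, as in the ER case of Theorem~\ref{thm:asymp size BettiEMH}, through a direct count of single-tuple cycles. For a fixed tuple $\bar x$, the event $\partial_{k,k}\bar x=0$ amounts to the correlated graph $\alpha(\mathcal{H}(\{\bar x\}))$ (with its $2k-1$ edges) appearing on the support of $\bar x$, and already for $k=2$ --- where this graph is $C_3$ --- the probability is strictly less than $(\pi r^2/|A|)^k$ by a constant factor coming from the geometry of intersecting balls, so a single-tuple count would yield a lower bound with the wrong constant. Counting $t_{k,k}$ instead requires only the $k$ path edges, whose presences are genuinely independent by Lemma~\ref{lem:RGG_trails_indep}, and the overcount of nonessential trails this introduces is controlled by $\rank(\partial_{k,k})\le t_{k-1,k}$, which is of strictly lower order in $n$ for fixed $r$. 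Thus the one point to get exactly right is the value $\mathbb{E}[t_{k,k}]=\tfrac{n!}{(n-k-1)!}(\pi r^2/|A|)^k$; everything else is a one-line squeeze.
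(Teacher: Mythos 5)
Your proof is correct and follows essentially the same route as the paper: bracket $\beta_{k,k}$ by rank--nullity via inequality~(\ref{eq:morselikeineq}), compute $\mathbb{E}[t_{k,k}]=\binom{n}{k+1}(k+1)!\,(\pi r^2/|A|)^k$ from Lemma~\ref{lem:RGG_trails_indep}, and show the lower-order term is negligible. The only (minor) variation is in the last step: the paper estimates $\mathbb{E}[t_{k-1,k}]/\mathbb{E}[t_{k,k}]\sim 1/n$ probabilistically, whereas you dispose of $t_{k-1,k}$ with the cruder but cleaner deterministic bound $t_{k-1,k}\le n!/(n-k)!\le n^k$, which suffices precisely because $r$ is held fixed; your closing remark --- that a direct single-tuple lower bound as in the ER case of Theorem~\ref{thm:asymp size BettiEMH} would produce the wrong constant here, since the $2k-1$ edges of $\alpha(\mathcal{H}(\{\bar x\}))$ are geometrically correlated in a random geometric graph --- is a correct observation that the paper leaves implicit.
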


\begin{proof}
 This proof is essentially identical to that of Theorem \ref{thm:asymp size BettiEMH}, with a different estimate for the probability of  $k$-trails appearing in $G\sim G(n, n^{-q}, A).$ Specifically, we have that each $k$-trail appears with probability $\left(\frac{\pi r^2}{|A|} \right)^k$ by Lemma \ref{lem:RGG_trails_indep}.
Since $EMH_{k,k}$ is free abelian by Lemma \ref{lem:LowerTriangular}, we have

\[
\lim\limits_{n\to \infty}\frac{\mathbb{E}[t_{k,k}]}{n^{k+1}r^{2k}} = \left(\frac{\pi}{|A|}\right)^k.
\]
As in the proof of Theorem \ref{thm:central limit thm}, per Lemma \ref{lem:RGG_trails_indep} it holds that
\[
\frac{\mathbb{E}[t_{k-1,k}]}{\mathbb{E}[t_{k,k}]} = 
\frac{\binom{n}{k}k! \left(1- \left(\frac{\pi r^2}{|A|} \right)^{k-1} \right) \left(\frac{\pi r^2}{|A|} \right)^k}{\binom{n}{k+1}(k+1)!\left(\frac{\pi r^2}{|A|} \right)^k} \sim
\frac{\frac{n^k}{k!} k!}{\frac{n^{k+1}}{(k+1)!}(k+1)!} =
\frac{1}{n} \xrightarrow{n \to \infty} 0.
\]
And so it follows that 
\[
\lim\limits_{n\to \infty}\frac{\mathbb{E}[t_{k,k}]}{\mathbb{E}[-t_{k-1,k}+t_{k,k}]} =1,
\]
which, together with equation (\ref{eq:morselikeineq}), proves the statement.
\end{proof}

Finally, it is possible to prove a central limit type result for the Betti numbers of the magnitude homology groups of the first diagonal $EMH_{k,k}(G)$.
The proof is identical to the result shown for the Erd\H{o}s-R\'{e}nyi random model in Theorem \ref{thm:central limit thm} by choosing $p=\left(\frac{\pi r^2}{|A|} \right)$.

\begin{theorem} \label{thm:clt_rgg}
Call $\beta_{k,k}=\rank(EMH_{k,k}(G(n,n^{-q},A)))$. Then as $n \to \infty,$
\[
\frac{\beta_{k,k} -  \mathbb{E}[\beta_{k,k}]}{\sqrt{Var(\beta_{k,k})}} \Rightarrow N(0,1).
\]
\label{thm:RGG_CLT}
\end{theorem}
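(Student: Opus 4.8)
The plan is to replicate the three-claim sandwich argument of Theorem~\ref{thm:central limit thm}, replacing the Erd\H{o}s--R\'enyi edge probability by the geometric one $p = \pi r^2/|A|$ (Theorem~\ref{thm:prob_edge_RGG}) and the independence of Bernoulli edge trials by Corollary~\ref{cor:prob_E2} and Lemma~\ref{lem:RGG_trails_indep}. Throughout we work in the non-degenerate regime $0 < q < \tfrac{k+1}{2k}$ of Theorem~\ref{thm:vanishing threshold RGG}, where $\E[\beta_{k,k}] \to \infty$ and $Var(\beta_{k,k}) \neq 0$ for $n$ large, so that the normalization is meaningful. Writing $t_{k,\ell} = |ET_{k,\ell}(G(n,n^{-q},A))|$, Lemma~\ref{lem:LowerTriangular} gives $EMC_{k+1,k}(G)\cong 0$, so $EMH_{k,k}(G)$ is free abelian and rank--nullity yields $-t_{k-1,k} + t_{k,k} \le \beta_{k,k} \le t_{k,k}$. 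By Lemma~\ref{lem:RGG_trails_indep} each of the $\binom{n}{k+1}(k+1)!$ ordered $(k+1)$-tuples of distinct vertices induces a path of length $k$ with probability $p^k$, so $\E[t_{k,k}] = \binom{n}{k+1}(k+1)!\,p^k$, and the count for $\E[t_{k-1,k}]$ differs by a factor $\sim 1/n$; hence $\E[t_{k-1,k}]/\E[t_{k,k}]\to 0$, exactly as recorded in the proof of Theorem~\ref{thm:asympBettiRGG}, and in particular $\E[t_{k,k}]/\E[-t_{k-1,k}+t_{k,k}] \to 1$.

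The next step is to establish the three auxiliary limits of Theorem~\ref{thm:central limit thm}:
\begin{enumerate}
\item $Var(t_{k,k})/Var(t_{k,k}-t_{k-1,k}) \to 1$;
\item $\bigl(t_{k,k}-\E[t_{k,k}]\bigr)/\sqrt{Var(t_{k,k})} \Rightarrow N(0,1)$;
\item $\bigl((t_{k,k}-t_{k-1,k})-\E[t_{k,k}-t_{k-1,k}]\bigr)/\sqrt{Var(t_{k,k}-t_{k-1,k})} \Rightarrow N(0,1)$.
\end{enumerate}
For (1) I would write $t_{k,k} = \sum_I \chi_I$ over ordered $(k+1)$-tuples of distinct vertices, with $\chi_I$ the indicator that $I$ induces a path of length $k$, expand $Var(t_{k,k}) = \sum_{I,J}\bigl(\E[\chi_I\chi_J]-p^{2k}\bigr)$, and group the pairs $(I,J)$ by the number of vertices (hence edges) they share. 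Corollary~\ref{cor:prob_E2} shows that pairs sharing no edge contribute zero covariance, and for pairs sharing at least one edge the union of the two paths realizes one of the restricted $Y$-classes of Definition~\ref{def:restrictedY}, so Corollary~\ref{cor:prob_Y} supplies the needed upper bound on $\E[\chi_I\chi_J]$; the same bookkeeping estimates $Var(t_{k-1,k})$ and $Cov(t_{k-1,k},t_{k,k})$ and shows they are $o\bigl(Var(t_{k,k})\bigr)$, giving (1). For (2) and (3) I would invoke the normal approximation for dissociated random variables from \cite[Theorem~1]{barbour1989central}, as in \cite[Theorem~2.3]{kahle2013limit}: index by the potential edge sets of $k$-paths on $k$ or $k+1$ vertices, put $X_{\boldsymbol a} = \tfrac{1}{\sqrt{Var(t_{k,k})}}\bigl(\chi_{V_{\boldsymbol a}}-\E[\chi_{V_{\boldsymbol a}}]\bigr)$, note these are dissociated because the torus points are sampled i.i.d.\ (so subgraphs on disjoint vertex sets are genuinely independent), and bound the two halves of the $d_1$-estimate by partitioning each dependency neighborhood by intersection size and showing both sums are $o(1/n)$.

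With these in hand the conclusion follows the final assembly of Theorem~\ref{thm:central limit thm} verbatim: from $\E[t_{k,k}]/\E[-t_{k-1,k}+t_{k,k}]\to 1$ together with limit~(1) one obtains $\bigl|\,\tfrac{\E[t_{k,k}]}{\sqrt{Var(t_{k,k})}} - \tfrac{\E[t_{k,k}-t_{k-1,k}]}{\sqrt{Var(t_{k,k}-t_{k-1,k})}}\,\bigr| \to 0$, and feeding the rank--nullity sandwich, limits (2) and (3), and a Chebyshev bound on the discrepancy term into the three-summand decomposition of equation~(\ref{eq:prob from Morse ineq}) squeezes $\bigl(\beta_{k,k}-\E[\beta_{k,k}]\bigr)/\sqrt{Var(\beta_{k,k})}$ between sequences converging to $\Phi(\alpha-\varepsilon)$ and $\Phi(\alpha+\varepsilon)$ for every $\varepsilon>0$, hence to $\Phi(\alpha)$, which is the claimed central limit theorem.

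I expect the main obstacle to be the second-moment bookkeeping behind limit~(1) and the $d_1$-bound. Unlike in $G(n,p)$, the edges of a random geometric graph are not mutually independent, so the probability of a subgraph containing a cycle strictly exceeds the product of its edge probabilities, and one cannot literally substitute $p$ into the Erd\H{o}s--R\'enyi variance formula. The resolution is that only (i)~independence of edges sharing a single vertex (Corollary~\ref{cor:prob_E2}), (ii)~the trail probability $p^k$ (Lemma~\ref{lem:RGG_trails_indep}), and (iii)~the restricted $Y$-class probabilities (Corollary~\ref{cor:prob_Y}) are actually used, and together with the constraint $q < \tfrac{k+1}{2k}$ these make every off-diagonal covariance and dependency-neighborhood sum lower order; once that is verified, the rest of the argument is the Kahle--Meckes computation with $p$ replaced by $\pi r^2/|A|$.
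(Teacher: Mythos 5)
Your proposal matches the paper's own treatment: the authors' proof of this theorem is simply the one-line remark that the argument of Theorem~\ref{thm:central limit thm} applies verbatim after substituting $p = \pi r^2/|A|$, and your plan is a fleshed-out version of exactly that substitution, down to the same rank--nullity sandwich, the same three auxiliary limits, the same Barbour--Karo\'nski--Ruci\'nski dissociated-variable CLT, and the same squeeze via equation~(\ref{eq:prob from Morse ineq}). You go a bit further than the paper by explicitly flagging that RGG edges are not mutually independent (so a subgraph containing a cycle does not appear with probability $p^{|E|}$) and by identifying that only the pairwise-edge independence of Corollary~\ref{cor:prob_E2}, the trail probability of Lemma~\ref{lem:RGG_trails_indep}, and the restricted $Y$-class bounds of Corollary~\ref{cor:prob_Y} are actually invoked, which is a useful clarification of a step the paper leaves implicit.
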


\section{Conclusion}

In this paper, we investigated how the structure of a graph $G$ is represented in the magnitude homology groups of $G$. To strengthen this connection, we developed the theory of eulerian magnitude homology, built on a subcomplex of the magnitude chain complex that omits ``singular" trails that must revisit vertices. By restricting our attention to the eulerian trails, we are able to characterize classes of graphs which can support eulerian magnitude cycles  along the $k=\ell$ line. In this case, the image of the differential is zero for dimension reasons and so understanding these classes is sufficient to characterize eulerian magnitude homology completely. In Theorem \ref{thm:cycle_decomp} we produce a generating set for these homology groups, thus characterizing these groups in terms of existence of particular subgraphs. Due to its combinatorial complexity, we leave the matter of understanding how elements of these generating sets interact open for the present. 

Singular trails, which repeat landmarks, generate the complementary discriminant magnitude homology. The two theories are intertwined through the corresponding long exact sequence (\ref{eq:LES}), as the differential of a trail that must revisit a vertex may include trails that do not. Characterizing this interaction provides us with tools for dissecting the generators of magnitude homology, again focused along the $k = \ell$ diagonal where the combinatorics are more accessible. Here, as we saw in Theorems \ref{thm:iso MH DMH} and \ref{thm:lesnotsplit}, the existence of nonsingular subtrails is regulated by eulerian magnitude homology groups of lower degree.

In the context of Erd\"os-R\'enyi random graphs and random geometric graphs on the standard torus, where boundary effects are removed, we leverage our understanding of classes of graphs supporting generators along the $k=\ell$ line to develop vanishing thresholds for eulerian magnitude homology in limiting expectation in Theorems \ref{thm:vanishingthreshER} and \ref{thm:vanishing threshold RGG}. Combined with the long exact sequence relating eulerian and discriminant magnitude homology, these results allow us to completely characterize the magnitude homology groups in the vanishing range. Outside of the vanishing range, we provide limiting characterizations of the $(k,k)$-Betti numbers in Theorems \ref{thm:asymp size BettiEMH} and \ref{thm:asympBettiRGG} for both classes of graphs, and corresponding central limit theorems in Theorems \ref{thm:central limit thm} and \ref{thm:clt_rgg}.

\medspace

The tools developed in this paper suggest a number of avenues for further work. Here we highlight a few that the authors find particularly interesting.

\begin{enumerate}
\item We heavily leverage equality between the length $\ell$ and number of landmarks $k$ in a trail to obtain our combinatorial characterization of generators in Section \ref{sec:EMH}. We believe that these results can in turn be leveraged to iteratively study the combinatorics witnessed by the $\ell = k+i$ lines for increasing $i$, providing more insight into the graph-theoretic meaning of the higher magnitude homology groups. 

\item The expected values of Betti numbers of both Erd\H{o}s-R\'{e}nyi and random geometric graphs are computed here using an approach which heavily depends on the possibility of computing explicitly the probability that each edge appears in the graph.
There are examples in the literature \cite{kannan2019persistent, ayala2009discrete, bobrowski2015topology} of limit results for Betti numbers of random structures exploiting discrete Morse theory: would it be possible to use similar techniques to extend the results present in this work to more general classes of graphs?

\item The vanishing threshold and expected Betti numbers for random geometric graphs, computed in Theorem \ref{thm:vanishing threshold RGG} and \ref{thm:asympBettiRGG} respectively, are rather coarse upper bounds.
Indeed, we obtained them by bounding the number of $(k+1)$-tuples of vertices inducing a path of length $k$ with the quantity
\[
\binom{n}{k+1} \left(\frac{\pi r^2}{|A|} \right)^{k},
\]   
which is impossible to attain in some regimes, e.g. relatively sparse and sparse regimes \cite{duchemin2023random}.
Is it possible to establish more refined distribution-dependent thresholds and Betti numbers estimates by taking into account the specific regime?

\item The random geometric graphs studied here are embedded in the torus $\mathbb{T}^2$ mostly as a matter of convenience.
Would a similar proof for the vanishing threshold of $EMH_{k,k}(G)$ work (possibly with some restrictions on the acceptable regimes) in more general settings? 
\end{enumerate}

\subsection*{Acknowledgments}

GM would like to thank Simon Willerton, for pointing out a mistake in the first draft of the paper concerning the relation between magnitude homology and eulerian magnitude homology groups. The authors would also like to express their gratitude to the anonymous reviewers for their valuable and constructive suggestions, which have substantially improved the paper.

\bibliographystyle{amsplain}
\bibliography{sn-bib}

\providecommand{\bysame}{\leavevmode\hbox to3em{\hrulefill}\thinspace}
\providecommand{\MR}{\relax\ifhmode\unskip\space\fi MR }
\providecommand{\MRhref}[2]{%
  \href{http://www.ams.org/mathscinet-getitem?mr=#1}{#2}
}
\providecommand{\href}[2]{#2}
\begin{thebibliography}{10}

\bibitem{asao2021girth}
Yasuhiko Asao, Yasuaki Hiraoka, and Shu Kanazawa, \emph{Girth, magnitude
  homology and phase transition of diagonality}, Proceedings of the Royal
  Society of Edinburgh Section A: Mathematics (2021), 1--27.

\bibitem{asao2024magnitude}
Yasuhiko Asao and Sergei~O Ivanov, \emph{Magnitude homology is a derived
  functor}, arXiv preprint arXiv:2402.14466 (2024).

\bibitem{asao2020geometric}
Yasuhiko Asao and Kengo Izumihara, \emph{Geometric approach to graph magnitude
  homology}, arXiv preprint arXiv:2003.08058 (2020).

\bibitem{ayala2009discrete}
R~Ayala, LM~Fern{\'a}ndez, D~Fern{\'a}ndez-Ternero, and JA~Vilches,
  \emph{Discrete morse theory on graphs}, Topology and its Applications
  \textbf{156} (2009), no.~18, 3091--3100.

\bibitem{barbour1989central}
Andrew~D Barbour, Michal Karo{\'n}ski, and Andrzej Ruci{\'n}ski, \emph{A
  central limit theorem for decomposable random variables with applications to
  random graphs}, Journal of Combinatorial Theory, Series B \textbf{47} (1989),
  no.~2, 125--145.

\bibitem{bobrowski2015topology}
Omer Bobrowski and Sayan Mukherjee, \emph{The topology of probability
  distributions on manifolds}, Probability theory and related fields
  \textbf{161} (2015), no.~3-4, 651--686.

\bibitem{caputi2023finite}
Luigi Caputi and Carlo Collari, \emph{On finite generation in magnitude (co)
  homology, and its torsion}, arXiv preprint arXiv:2302.06525 (2023).

\bibitem{cho2019quantales}
Simon Cho, \emph{Quantales, persistence, and magnitude homology}, arXiv
  preprint arXiv:1910.02905 (2019).

\bibitem{duchemin2023random}
Quentin Duchemin and Yohann De~Castro, \emph{Random geometric graph: Some
  recent developments and perspectives}, High Dimensional Probability IX: The
  Ethereal Volume (2023), 347--392.

\bibitem{erdHos1960evolution}
Paul Erd{\H{o}}s, Alfr{\'e}d R{\'e}nyi, et~al., \emph{On the evolution of
  random graphs}, Publ. Math. Inst. Hung. Acad. Sci \textbf{5} (1960), no.~1,
  17--60.

\bibitem{gilbert1961random}
Edward~N Gilbert, \emph{Random plane networks}, Journal of the society for
  industrial and applied mathematics \textbf{9} (1961), no.~4, 533--543.

\bibitem{grigor2012homologies}
Alexander Grigor'yan, Yong Lin, Yuri Muranov, and Shing-Tung Yau,
  \emph{Homologies of path complexes and digraphs}, arXiv preprint
  arXiv:1207.2834 (2012).

\bibitem{gu2018graph}
Yuzhou Gu, \emph{Graph magnitude homology via algebraic morse theory}, arXiv
  preprint arXiv:1809.07240 (2018).

\bibitem{hepworth2015categorifying}
Richard Hepworth and Simon Willerton, \emph{Categorifying the magnitude of a
  graph}, arXiv preprint arXiv:1505.04125 (2015).

\bibitem{kahle2009topology}
Matthew Kahle, \emph{Topology of random clique complexes}, Discrete mathematics
  \textbf{309} (2009), no.~6, 1658--1671.

\bibitem{kahle2011random}
\bysame, \emph{Random geometric complexes}, Discrete \& Computational Geometry
  \textbf{45} (2011), 553--573.

\bibitem{kahle2015erratum}
Matthew Kahle and Elizabeth \hspace{0mm} Meckes, \emph{Erratum: Limit theorems
  for betti numbers of random simplicial complexes}, arXiv preprint
  arXiv:1501.03759 (2015).

\bibitem{kahle2013limit}
Matthew Kahle and Elizabeth Meckes, \emph{Limit theorems for betti numbers of
  random simplicial complexes}, Homology, Homotopy and Applications \textbf{15}
  (2013), no.~1, 343--374.

\bibitem{kaneta2021magnitude}
Ryuki Kaneta and Masahiko Yoshinaga, \emph{Magnitude homology of metric spaces
  and order complexes}, Bulletin of the London Mathematical Society \textbf{53}
  (2021), no.~3, 893--905.

\bibitem{kannan2019persistent}
Harish Kannan, Emil Saucan, Indrava Roy, and Areejit Samal, \emph{Persistent
  homology of unweighted complex networks via discrete morse theory},
  Scientific reports \textbf{9} (2019), no.~1, 13817.

\bibitem{leinster2006euler}
Tom Leinster, \emph{The euler characteristic of a category}, arXiv preprint
  math/0610260 (2006).

\bibitem{leinster2013magnitude}
\bysame, \emph{The magnitude of metric spaces}, Documenta Mathematica
  \textbf{18} (2013), 857--905.

\bibitem{leinster2019magnitude}
\bysame, \emph{The magnitude of a graph}, Mathematical Proceedings of the
  Cambridge Philosophical Society, vol. 166, Cambridge University Press, 2019,
  pp.~247--264.

\bibitem{leinster2021entropy}
\bysame, \emph{Entropy and diversity: the axiomatic approach}, Cambridge
  university press (2021).

\bibitem{leinster2016magnitude}
Tom Leinster and Mark~W Meckes, \emph{The magnitude of a metric space: from
  category theory to geometric measure theory}, arXiv preprint arXiv:1606.00095
  (2016).

\bibitem{leinster2021magnitude}
Tom Leinster and Michael Shulman, \emph{Magnitude homology of enriched
  categories and metric spaces}, Algebraic \& Geometric Topology \textbf{21}
  (2021), no.~5, 2175--2221.

\bibitem{penrose2003random}
Mathew Penrose, \emph{Random geometric graphs}, OUP Oxford \textbf{5} (2003).

\bibitem{sazdanovic2021torsion}
Radmila Sazdanovic and Victor Summers, \emph{Torsion in the magnitude homology
  of graphs}, Journal of Homotopy and Related Structures \textbf{16} (2021),
  no.~2, 275--296.

\bibitem{tajima2021magnitude}
Yu~Tajima and Masahiko Yoshinaga, \emph{Magnitude homology of graphs and
  discrete morse theory on asao-izumihara complexes}, arXiv preprint
  arXiv:2110.02458 (2021).

\bibitem{yen2004link}
Li-Hsing Yen and Chang~Wu Yu, \emph{Link probability, network coverage, and
  related properties of wireless ad hoc networks}, 2004 IEEE International
  Conference on Mobile Ad-hoc and Sensor Systems (IEEE Cat. No. 04EX975), IEEE,
  2004, pp.~525--527.

\bibitem{yu2009computing}
Chang~Wu Yu, \emph{Computing subgraph probability of random geometric graphs
  with applications in quantitative analysis of ad hoc networks}, IEEE Journal
  on Selected Areas in Communications \textbf{27} (2009), no.~7, 1056--1065.

\end{thebibliography}

\end{document}